\let\bbordermatrix\bordermatrix
\patchcmd{\bbordermatrix}{8.75}{4.75}{}{}
\newcommand{\U}{\mathcal{U}}
\newcommand{\E}{\mathcal{E}}
\newcommand{\F}{\mathcal{F}}
\newcommand{\Ell}{\textup{Ell}}
\newcommand{\id}{\mathrm{id}}
\newcommand{\Aut}{\mathrm{Aut}}
\newcommand{\diag}{\mathrm{diag}}
\newcommand{\dlim}{\underset{\longrightarrow}{\lim}}
\theoremstyle{plain}
\newtheorem{theorem}{Theorem}[section]
\newtheorem{lemma}[theorem]{Lemma}
\newtheorem{corollary}[theorem]{Corollary}
\newtheorem{proposition}[theorem]{Proposition}
\newtheorem*{theorem*}{Theorem}
 \newtheorem*{lemma*}{Lemma}
\theoremstyle{definition}
\newtheorem{definition}[theorem]{Definition}
\newtheorem{example}[theorem]{Example}
\newtheorem{examples}[theorem]{Examples}
\newtheorem{remark}[theorem]{Remark}
\theoremstyle{remark}
\theoremstyle{plain}
\newtheorem*{thm}{Theorem}
\author[Forough, Jeong, Strung]{Marzieh Forough \and Ja A Jeong \and Karen R. Strung}
\address{Department of Applied Mathematics, Faculty of Information Technology\\ Czech Technical University in Prague\\ Th\'akurova 9 \\160 00, Prague 6, Czech Republic}
\email{foroumar@fit.cvut.cz}
\address{Department of Mathematical Sciences and Research Institute of Mathematics\\
Seoul National University\\
Seoul 08826, Korea}
\email{jajeong@snu.ac.kr}
\address{Department of Abstract Analysis\\ Institute of Mathematics, Czech Academy of Sciences, \v{Z}itn\'a 25, 115 67 Prague 1, Czech Republic}
\email{strung@math.cas.cz}
\title[Recursive subhomogeneity of orbit breaking subalgebras]{Recursive subhomogeneity of orbit-breaking subalgebras of $\mathrm{C}^*$-algebras associated to minimal homeomorphisms twisted by line bundles}
\subjclass[2020]{46L35, 46L85, 46H25, 37B05}
\keywords{Minimal homeomorphisms, $\mathrm{C}^*$-correspondences, Hilbert bimodules, Cuntz--Pimsner algebras, recursive subhomogeneous algebras}
\thanks{KRS is funded by \mbox{RVO: 67985840}. Part of this work was carried out when KRS was funded by GA\v{C}R project 20-17488Y and when JAJ was partially supported by NRF 2018R1D1A1B07041172.}
\begin{document}

\begin{abstract} In this paper, we construct recursive subhomogeneous decompositions for the Cuntz--Pimsner algebras obtained from breaking the orbit of a minimal Hilbert $C(X)$-bimodule at a closed subset $Y \subset X$ with non-empty interior. This generalizes the known recursive subhomogeneous decomposition for orbit-breaking subalgebras of crossed products by minimal homeomorphisms. 
\end{abstract}

\maketitle

\tableofcontents


\section{Introduction}

Recursive subhomogeneous (RSH) algebras are a particularly tractable class of unital subhomogeneous $\mathrm C^*$-algebras, which are given by iterated pullbacks of the form $C(X,M_n)$, for $X$ a compact Hausdorff space. This class of $\mathrm{C}^*$-algebras is rich and includes homogeneous $\mathrm{C}^*$-algebras and their direct sums, section algebras of locally trivial continuous fields over compact spaces with matrix fibres, the dimension drop intervals and matrix algebras over them, and the subalgebras of crossed products by minimal homeomorphisms given by breaking the orbit of the homeomorphism at a closed subset with non-empty interior (see \cite{LinQPhil:KthoeryMinHoms} and Section~\ref{sec:rsh}).  

Introduced by Phillips in~\cite{Phillips:recsub}, the similarity of recursive subhomogeneous algebras to direct sums of homogeneous $\mathrm{C}^*$-algebras allowed for the extension of constructions required for the study of $K$-theory, stable, real, and decomposition rank, and simple inductive limits of interest to the Elliott classification program \cite{Phillips:recsub, Phi:CancelSRDirLims, Winter:subhomdr}.  Indeed, when a simple unital $\mathrm{C}^*$-algebra $A$ can be modelled as an inductive limit of separable recursive subhomogeneous $\mathrm{C}^*$-algebras with slow dimension growth, then it is classifiable \cite{EllGongLinNiu2017}. In fact, inductive limits of RSH algebras are known to exhaust the Elliott invariant \cite{Ell:invariant}. 

If a $\mathrm{C}^*$-algebra $A$ is $^*$-isomorphic to an RSH algebra $B$, then we say that $B$ is an RSH decomposition of $A$. In general, an RSH decomposition (if it exists) may not be unique.   In this paper, we provide RSH decompositions for certain \emph{orbit-breaking} subalgebras of Cuntz--Pimsner algebras of Hilbert bimodules over commutative $\mathrm{C}^*$-algebras. 

If $A \cong C(X)$ for a compact metrizable space $X$, then any Hilbert bimodule over $A$ which is finitely-generated projective as a right Hilbert $A$-module and is left full, is necessarily given by a right Hilbert module of sections of a line bundle $\mathscr V$ over $X$ with left action given by the right action of $C(X)$ after composing with a homeomorphism $\alpha : X \to X$ \cite[Proposition 3.7]{AAFGJSV2024}. We say that the homeomorphism $\alpha$ is twisted by the line bundle $\mathscr{V}$ and denote such a module by $\Gamma(\mathscr{V}, \alpha)$ and the corresponding Cuntz--Pimsner algebra as $\mathcal{O}(\Gamma(\mathscr{V}, \alpha))$. (Since $\Gamma(\mathscr{V}, \alpha)$ is a Hilbert bimodule, one could equivalently construct the crossed product $C(X) \rtimes_{\Gamma(\mathscr{V}, \alpha)} \mathbb{Z}$ in the sense of \cite{AEE:Cross}. We use the Cuntz--Pimsner notation because it is slightly more concise.) When the vector bundle $\mathscr{V}$ is trivial, then $\mathcal{O}(\Gamma(\mathscr{V}, \alpha))$ is precisely the crossed product  $C(X)\rtimes_\alpha \mathbb Z$ of $C(X)$ by the homeomorphism $\alpha$. Now let $Y \subset X$ be a closed non-empty subset. The orbit-breaking Hilbert $C(X)$-bimodule, $C_0(X \setminus Y) \Gamma(\mathscr{V}, \alpha)$, is the Hilbert subbimodule of $\Gamma(\mathscr{V}, \alpha)$ where we have restricted the left multiplication to those functions in $C(X)$ which vanish on the set $Y$. We call the Cuntz--Pimsner algebra $\mathcal{O}(C_0(X \setminus Y)\Gamma(\mathscr{V}, \alpha))$ an \emph{orbit-breaking algebra}. When the vector bundle is trivial, $\mathcal{O}(C_0(X \setminus Y)\Gamma(\mathscr{V}, \alpha))$  is isomorphic to an orbit-breaking subalgebra of the usual crossed product $C(X) \rtimes_\alpha \mathbb{Z}$.

From their introduction by Putnam in the case of Cantor minimal systems \cite{Putnam:MinHomCantor}, orbit-breaking algebras have played a critical role in understanding properties and structures of crossed products such as $\mathcal{Z}$-stability and stable rank \cite{LinPhi:MinHom, TomsWinter:PNAS, TomsWinter:minhom, Sun:CantorTorus, Str:XxSn, Win:ClassCrossProd, Lin:MinDynOdd, alboiu-lutley}. They also are interesting $\mathrm{C}^*$-algebras in their own right, as sufficient conditions are known for when they are simple and $\mathcal{Z}$-stable. Here, $\mathcal{Z}$-stability refers to the property of tensorial absorption of the Jiang--Su algebra $\mathcal{Z}$---a simple, separable, unital, nuclear, projectionless, infinite-dimensional $\mathrm{C}^*$-algebra with unique tracial state and with the same $K$-theory as $\mathbb{C}$ \cite{JiaSu:Z}. 
That is, a $\mathrm{C}^*$-algebra $A$ is \emph{$\mathcal{Z}$-stable} if $A \otimes \mathcal{Z} \cong A$. As such, they provide dynamical models of  $\mathrm{C}^*$-algebras that are covered by the classification theorem, which says that unital, simple, separable, nuclear $\mathrm{C}^*$-algebras which satisfy the Universal Coefficient Theorem (UCT) and are $\mathcal{Z}$-stable, are classified, up to $^*$-isomorphism, by $K$-theory and tracial data (see for example \cite{CETWW, BBSTWW:2Col, EllGonLinNiu:ClaFinDecRan, GongLinNiue:ZClass, GongLinNiue:ZClass2, TWW, CarGabeSchafTikWhi2023}). The UCT is, loosely speaking, a technical requirement for classification results that allows one to lift information from $K$-theory to $KK$-theory. All known nuclear $\mathrm{C}^*$-algebras satisfy the UCT, but it remains an open problem to determine whether nuclearity implies the UCT (see \cite{BarLi:UCTI, WinterQDQUCT} for a discussion).

We say that a  unital, simple, separable, nuclear, $\mathcal{Z}$-stable $\mathrm{C}^*$-algebra which satisfies the UCT is \emph{classifiable}. It was shown in \cite{DPS:OrbitBreaking} that orbit-breaking subalgebras of crossed products can realize a wide range of $K$-theory---much wider than the crossed products themselves---and hence a wide range of stably finite, classifiable $\mathrm{C}^*$-algebras. The introduction of a non-trivial line bundle has the potential to expand this range, possibly exhausting the range of the Elliott invariant in the stably finite case.

In~\cite{AAFGJSV2024}, the authors introduced orbit-breaking subalgebras of the Cuntz--Pimsner algebra of a minimal homeomorphism $\alpha$ over a compact metrizable space $X$, twisted by a vector bundle $\mathscr{V}$. When $\mathscr{V}$ is a line bundle and the subspace $Y \subset X$ at which the orbit broken is sufficiently small (for example, a finite set of points), it was proved that $\mathcal{O}(C_0(X\setminus Y) \Gamma(\mathscr{V}, \alpha))$ is a centrally large subalgebra of $\mathcal{O}(\Gamma(\mathscr{V}, \alpha))$ as defined by Archey and Phillips in \cite{ArchPhil:SR1}. This implies that $\mathcal{O}(C_0(X\setminus Y) \Gamma(\mathscr{V}, \alpha))$ is simple and stably finite. Moreover, if the underlying space has finite covering dimension, it was shown that orbit-breaking subalgebras are $\mathcal{Z}$-stable. To show this, one uses results about the Rokhlin dimension of a homeomorphism \cite{Sza:RokDimZd} on a finite-dimensional metric space to first prove that the Cuntz--Pimsner algebra itself is $\mathcal{Z}$-stable. Appealing to the fact that $\mathcal{O}(C_0(X\setminus Y) \Gamma(\mathscr{V}, \alpha))$ is centrally large, we conclude that is also $\mathcal{Z}$-stable.

This implies, via the classification machinery, that $\mathcal{O}(C_0(X\setminus Y) \Gamma(\mathscr{V}, \alpha))$ is an inductive limit of RSH algebras. However, the result doesn't say anything about the RSH building blocks in the inductive system. Even when $\mathscr{V}$ is trivial, and we are in the setting of crossed products and their orbit-breaking algebras, this result does not recover the approximately RSH structure given by Q. Lin in an unpublished preprint \cite{QLin:Ay} (see \cite{LinQPhil:KthoeryMinHoms} for an outline of the construction). It also gives no information about what happens when the subset $Y$ has non-empty interior (in this case the orbit-breaking algebra will not be simple). 

The purpose of this paper is to exhibit an RSH decomposition for the Cuntz--Pimsner algebra $\mathcal{O}(C_0(X \setminus Y) \Gamma(\mathscr{V}, \alpha))$ when $Y \subset X$ is a closed subset with non-empty interior.

Such a construction will allow us to investigate further properties of the $\mathrm C^*$-algebras associated to minimal homeomorphisms twisted by line bundles and their orbit-breaking subalgebras. In particular, a follow-up paper by the authors~\cite{FJS} will show that if $(X, \alpha)$ is a minimal dynamical system with \emph{mean dimension zero} (a dynamical generalization of covering dimension introduced by Gromov \cite{GromovMD} and developed by Lindenstrauss and Weiss \cite{LindWeiss:MTD}),  $\mathscr{V}$ is a line bundle, and $Y \subset X$ is sufficiently small, then both $\mathcal{O}(C_0(X \setminus Y)\Gamma(\mathscr{V}, \alpha))$ and $\mathcal{O}(\Gamma(\mathscr{V}, \alpha))$ are $\mathcal{Z}$-stable and hence classifiable. In subsequent work, further properties such as $K$-theory and stable rank will be studied.

The structure of the paper is as follows. Section~\ref{sec:prelims} provides a short introduction to Hilbert bimodules and Cuntz--Pimsner algebras, and related preliminaries.  

In Section~\ref{sec:TP} the tensor products of Hilbert $C(X)$-bimodules are described in terms of tensor products of line bundles and the composition of homeomorphisms. We make a considerable use of this description in this work.

In Section~\ref{sec:OBA}, we recall the definition and  properties of orbit-breaking algebras, which are the Cuntz--Pimsner algebras of orbit-breaking bimodules. 

In Section~\ref{sec:rsh}, we briefly recall the recursive subhomogeneous structure of orbit-breaking algebras in the case of crossed products by minimal homeomorphisms, and we outline the construction of the RSH decomposition of orbit-breaking algebras in our setting. 

Endomorphism bundles of vector bundles are the focus of Section~\ref{sec:endomorphism bdl}. Vector bundles constructed from the line bundles in Section~\ref{sec:TP} are used in this framework to construct the building blocks of the recursive subhomogeneous algebras.

Section~\ref{sec:embedding} establishes an embedding of the orbit-breaking algebras into a direct sum of section algebras of endomorphism bundles. 

The final section is devoted to the RSH decomposition for the range of the embedding obtained in {Section~\ref{sec:embedding}. Indeed, we show that  $\mathcal{O}(\E_Y)$  is isomorphic to the iterated pullback
\begin{equation*}
    \Gamma(\mathscr{M}_1) \oplus_{\Gamma(\mathscr{M}_2|_{\overline{Y_2}\setminus Y_2})} \oplus \dots \oplus_{\Gamma(\mathscr{M}_K|_{\overline{Y_K}\setminus Y_K})} \Gamma(\mathscr{M}_K),
\end{equation*} 
where $\Gamma(\mathscr{M}_k)$ are section algebras of locally trivial matrix bundles whose bundle structure is induced by the line bundle $\mathscr{V}$.  This implies that the orbit-breaking algebras are recursive subhomogeneous algebras and allows us to conclude our main theorem, \textbf{Theorem~\ref{maintheorem}}:

\begin{thm}
 Let $X$ be an infinite compact metric space, $\alpha : X \to X$ a minimal homeomorphism, $\mathscr{V}$ a line bundle over $X$ and $Y \subset X$ a closed subset with non-empty interior. Let $r_1 < \dots < r_K$ denote the distinct first return times to $Y$. The orbit-breaking algebra $\mathcal O (C_0(X \setminus Y)\Gamma(\mathscr{V}, \alpha))$ has a recursive subhomogeneous decomposition $B$ such that 
 \begin{enumerate}
     \item the length $l$ of $B$ is at least $K$,
     \item the base spaces $Z_1, \dots, Z_l$ have dimension bounded by $X$, and there is $l_0= 0 < l_1 < l_2 < \dotsm < l_K = l$ such that 
     \[
    \bigcup_{j=l_{k-1}+1}^{l_k} Z_j = \overline{\{ y \in Y \mid r(y) = r_k\}},
     \]
     for every $1 \leq k \leq K$,
     \item the matrix sizes are exactly $r_1, \dots, r_K$.
 \end{enumerate}
\end{thm}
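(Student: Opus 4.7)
My plan is to combine the embedding from Section~\ref{sec:embedding} with an iterated pullback description, and then refine this into an RSH tower from which the three items of the statement can be read off directly.

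First, I would invoke the embedding of Section~\ref{sec:embedding} to replace $\mathcal{O}(C_0(X \setminus Y)\Gamma(\mathscr{V}, \alpha))$ by its isomorphic copy $\mathcal{O}(\E_Y)$ sitting inside $\bigoplus_{k=1}^K \Gamma(\mathscr{M}_k)$. Here each $\mathscr{M}_k$ is a locally trivial matrix bundle with fibre $M_{r_k}(\mathbb{C})$ over $\overline{Y_k} := \overline{\{y \in Y : r(y) = r_k\}}$, and its transition data is encoded by the tensor powers of $\mathscr{V}$ computed in Section~\ref{sec:TP}.

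Second, I would show that the image $\mathcal{O}(\E_Y)$ coincides with the iterated pullback
\[
\Gamma(\mathscr{M}_1) \oplus_{\Gamma(\mathscr{M}_2|_{\overline{Y_2}\setminus Y_2})} \Gamma(\mathscr{M}_2) \oplus \dots \oplus_{\Gamma(\mathscr{M}_K|_{\overline{Y_K}\setminus Y_K})} \Gamma(\mathscr{M}_K).
\]
The key observation is that a point of $\overline{Y_k} \setminus Y_k$ is a limit of points with strictly smaller first return time to $Y$; continuity of sections at such a point forces the boundary value to agree, under the appropriate restriction map, with a value determined by one of the lower-index bundles $\mathscr{M}_{k'}$ for $k' < k$. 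Using the tensor product description of Section~\ref{sec:TP}, these compatibility conditions can be shown to be both necessary and sufficient for a tuple of sections to lie in $\mathcal{O}(\E_Y)$.

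Third, to pass from this pullback description to an honest RSH decomposition (whose base-space stages require trivial matrix bundles), I would cover each $\overline{Y_k}$ by finitely many closed sets $Z_{l_{k-1}+1}, \dots, Z_{l_k}$ on which $\mathscr{M}_k$ trivialises as $Z_j \times M_{r_k}(\mathbb{C})$, and assemble the pullback one trivialised patch at a time in the order $k = 1, \dots, K$. Each clutching step takes the standard RSH form, with the restriction map landing in matrix-valued functions on the intersection with the previously assembled base. This produces an RSH tower of length $l = l_K \geq K$ whose base spaces $Z_j \subset X$ satisfy $\dim Z_j \leq \dim X$, whose matrix sizes are precisely $r_1, \dots, r_K$, and whose partition $\bigcup_{j=l_{k-1}+1}^{l_k} Z_j = \overline{Y_k}$ is exactly what item~(2) demands.

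The main obstacle I anticipate is the second step: establishing that the image of the embedding is \emph{exactly} the iterated pullback rather than merely contained in it. This requires a careful analysis of how orbit-breaking, combined with the line-bundle twist, prescribes the boundary behaviour of sections of $\mathscr{M}_k$, and confirming that every compatible tuple of sections actually arises from an orbit-breaking element. The non-triviality of $\mathscr{V}$ enters precisely here, since the clutching data between bundles of different return-time levels must respect the twisted tensor powers of $\mathscr{V}$ along orbit segments.
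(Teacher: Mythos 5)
Your proposal follows essentially the same route as the paper: the embedding $\pi = \oplus_k \pi_k$ into $\bigoplus_k \Gamma(\mathscr{M}_k)$, the identification of the image with the iterated pullback via the boundary decomposition property (the paper's Theorem~\ref{thm:bdp} and Theorem~\ref{thm:rsh}), and finally the local trivialisation of each matrix bundle $\mathscr{M}_k$ to obtain an honest RSH tower, which the paper delegates to \cite[Propositions 1.7 and 3.2]{Phillips:recsub}. The obstacle you flag in your second step is exactly where the paper invests its effort (Lemma~\ref{lemma:BC} through Theorem~\ref{thm:rsh}), so your outline is an accurate blueprint of the actual argument.
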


From this, we can recover the classification results from \cite{AAFGJSV2024}, see \textbf{Corollary~\ref{cor:mainthmCor}}. We end with some remarks about how our RSH construction generalizes that of \cite{QLin:Ay}, and we discuss how to obtain more general results concerning classification and stable rank when the base space $X$ is not necessarily of finite dimension.

\subsubsection*{Acknowledgments} The authors would like to thank Gi Hyun Park, Maria Grazia Viola, Magdalena Georgescu and Dawn Archey for helpful discussions. This project was initiated as part of the workshop 18w5168  \emph{Women in Operator Algebras} at the Banff International Research Station in November 2018.


\section{Preliminaries} \label{sec:prelims}
In this section we review the basics of Hilbert bimodules, with particular focus on Hilbert $C(X)$-bimodules. We then recall the construction of Cuntz--Pimsner algebras, as well as some useful facts that we will require in subsequent sections.

\subsection{Hilbert bimodules}
We are particularly interested in Hilbert $C(X)$-bimodules, but we begin with recalling the definition and relevant results about Hilbert bimodules over arbitrary $\mathrm C^*$-algebras.

\begin{definition} Let $A$ and $B$ be $\mathrm{C}^*$-algebras. A \emph{Hilbert $A$-$B$ bimodule} $\E$
is a left $A$-module, right $B$-module equipped with
\begin{itemize}[left=0pt, label=\tiny$\bullet$]
    \item a right $B$-valued inner product $\langle \cdot, \cdot \rangle_\E$  giving $\E$ a right Hilbert $B$-module structure, and
    \item a left $A$-valued inner product $_\E \langle \cdot , \cdot \rangle$  giving $\E$ a left Hilbert $A$-module structure,
\end{itemize}
such that compatibility condition
\[ \xi_1  \langle \xi_2, \xi_3 \rangle_\E = \, _\E \!\langle \xi_1, \xi_2 \rangle  \xi_3, \]
is satisfied for every $\xi_1, \xi_2, \xi_3 \in \E$.
If $A = B$ we call $\E$ a Hilbert $A$-bimodule. 
\end{definition}

A Hilbert $A$-$B$-bimodule $\E$ is \emph{left}, respectively \emph{right}, \emph{full} if $\overline{_\E\langle \E, \E \rangle} \  = A$, respectively $\overline{\langle \E, \E \rangle_\E}\ = B$. Here $\overline{\langle \E, \E\rangle}$ denotes the closed linear span of $\{ \langle \xi_1, \xi_2 \rangle \mid \xi_1, \xi_2\in \E\}$.

Any Hilbert $A$-$B$-bimodule may also be viewed as a special case of an $A$-$B$ \emph{$\mathrm C^*$-correspondence}, that is, a right Hilbert $B$-module equipped with a \emph{structure map} $\varphi_\E : A \to \mathcal{L}(\E)$, which is a $^*$-homomorphism from $A$ into the $\mathrm C^*$-algebra of adjointable operators $\mathcal{L}(\E)$. The structure map of a Hilbert $A$-$B$-bimodule is just given by left multiplication. Note that not every $A$-$B$ $\mathrm C^*$-correspondence can be given the structure of a Hilbert $A$-$B$-bimodule.

\subsubsection{\texorpdfstring{Tensor products of  Hilbert bimodules}{Tensor products of  Hilbert C(X)-bimodules}}
Let $A, B$ and  $C$ be $\mathrm C^*$-algebras. Given a Hilbert $A$-$B$ bimodule $\E$ and a Hilbert $B$-$C$-bimodule $\F$, the $B$-balanced tensor product $\E \otimes_B \F$ is a Hilbert $A$-$C$-bimodule with left $A$-valued inner product satisfying 
\[
{}_{\E \otimes_B \F} \langle \xi_1 \otimes \eta_1 , \xi_2 \otimes \eta_2 \rangle = {}_{\E}\langle \xi_1  {}_\F\langle \eta_1, \eta_2\rangle, \xi_2 \rangle, \quad \xi_1, \xi_2 \in \E, \eta_1, \eta_2  \in \F,
\]
and right $C$-valued inner product satisfying 
\[
\langle \xi_1 \otimes \eta_1, \xi_2 \otimes \eta_2 \rangle_{\E \otimes_B \F} = \langle \eta_1, \langle \xi_1, \xi_2\rangle_\E  \eta_2 \rangle_{\F}, \quad \xi_1, \xi_2 \in \E, \eta_1, \eta_2 \in \F.
\]
If $\E$ is a Hilbert $A$-bimodule then we denote
\[ \E^{\otimes n} := \underbrace{\E \otimes_A \cdots \otimes_A \E}_{n \text{ times}}.\] 

\subsubsection{Right Hilbert $C(X)$-modules} 
When $A = C(X)$, we can construct Hilbert $A$-modules from vector bundles. For a good introduction to the theory of bundles, see \cite{Hus:fibre}.  Let $X$ be a compact metric space and $\mathscr{V} = [T, p, X]$ a Hermitian vector bundle of rank $n$. (Here $T$ denotes the total space of the bundle, $X$ the base space, and $p:T \to X$ the projection map.) We denote the set of continuous sections of $\mathscr{V}$ by $\Gamma(\mathscr{V} )$.  
Then $\Gamma(\mathscr V)$ has a natural right $C(X)$-module structure given by multiplication of a section by a continuous function. In fact,  it is a right Hilbert $C(X)$-module: Let $\mathcal{A}= \{(U_j,h_j)\}_{j=1}^N$ be an atlas of $\mathscr V$ and let $\gamma_1,\dots, \gamma_N$  a
partition of unity subordinate to $U_1,\dots, U_N$. Chart maps are always assumed to preserve inner products: $\langle h_j(x,v),h_j(x,w)\rangle_{p^{-1}(x)}=\langle v,w\rangle_{\mathbb C^n}$.} Then the $C(X)$-valued inner product 
\begin{equation} \label{innerproduct} \langle \xi,\eta\rangle_{\Gamma(\mathscr V)}(x):= \sum_{j=1}^N \gamma_j(x)\langle h_j^{-1}(\xi(x)),h_j^{-1}(\eta(x))\rangle_{\mathbb C^n}, \quad \xi, \eta \in \Gamma(\mathscr{V}),
\end{equation}
makes $\Gamma(\mathscr V)$ into a right Hilbert $C(X)$-module. Note that the inner product given in \eqref{innerproduct} does not depend on the choice of atlas nor the choice of a partition of unity. Moreover, with pairwise orthonormal vectors $s_{1},\dots, s_{n}$ of $\mathbb C^n$, the continuous sections given by
\[
\xi_{j,k} (x) := \begin{cases}
h_j(x, s_{k}\gamma_j(x)^{1/2}) , & x \in U_j  \\
0, & x \notin U_j.
\end{cases} 
\] generate $\Gamma(\mathscr V)$. Indeed, it is easily checked that 
\[\xi=\sum_{j=1}^N \sum_{k=1}^n \xi_{j,k}\langle \xi_{j,k},\xi\rangle_{\Gamma(\mathscr V)},\]  for every $\xi\in \Gamma(\mathscr V)$. 
Any continuous function $f  \in  C_0(U_j)$ allows us to define a section $\xi \in  \Gamma(\mathscr{V} )$ by setting
\[
\xi(x):=  \left\{ \begin{array}{ll} h_j(x, f(x)) & \text{if} \quad x \in U_j\\
0 & \text{otherwise}. \end{array} \right.
\]
By abuse of notation, when we refer to such a section $\xi(x)$ we will often write $h_j(x, f(x))$. 
If $\mathscr V$ is a line bundle, then the sections 
\begin{equation}\label{generator}
\eta_j(x):=h_j(x, \gamma_j(x)^{1/2})
\end{equation} 
generate $\Gamma(\mathscr V)$.

By the Serre--Swan theorem \cite[Theorem 2]{Swa:bundles} every finitely generated projective right $C(X)$-module is of the form $\Gamma(\mathscr{V})$ for a vector bundle over $X$. Furthermore, if $\E$ is  a right Hilbert $C(X)$-module, there is a unitary isomorphism $U : \E \to \Gamma(\mathscr{V})$, where $\Gamma(\mathscr{V})$ is equipped with an inner product as defined in \eqref{innerproduct} with respect to any choice of atlas for $\mathscr{V}$.


\subsubsection{Homeomorphisms twisted by line bundles}\label{theMainCharacter} We recall from \cite{AAFGJSV2024} that, when a Hilbert $C(X)$-bimodule is left and right full, it always arises from a homeomorphism twisted by a line bundle over $X$. 

Let $\alpha : X \to X$ be a homeomorphism and let $\mathscr{V} = [T, p, X]$ be a line bundle. 
We denote the \emph{pullback bundle} by $\alpha^*  \mathscr V$: the fibre $(\alpha^*\mathscr V)_x$ of $\alpha^*\mathscr V$ at $x\in X$ is  the fibre $\mathscr V_{\alpha(x)}$ of $\mathscr V$ at $\alpha(x)$.

We define the Hilbert $C(X)$-bimodule $\Gamma(\mathscr{V}, \alpha)$ as follows. As a right Hilbert $C(X)$-module, we have
\[ \Gamma(\mathscr{V}, \alpha)_{C(X)} \cong \Gamma(\mathscr{V}),\]
and we define the left action of $C(X)$ using $\alpha$ by setting
\[ f  \xi := \xi f \circ \alpha, \quad f \in C(X),\ \xi \in \Gamma(\mathscr{V}, \alpha). \]
The left inner product is then necessarily given by
\[ _{\Gamma(\mathscr{V}, \alpha)}\langle \xi_1, \xi_2 \rangle = \langle \xi_2, \xi_1 \rangle_{\Gamma(\mathscr{V})} \circ \alpha^{-1}.\]
Furthermore, as a left Hilbert $C(X)$-module, we have 
\[ _{C(X)}\Gamma(\mathscr{V}, \alpha) \cong \Gamma((\alpha^{-1})^*\mathscr{V}),\]
see \cite[Lemma 3.6]{AAFGJSV2024}. We refer to $\Gamma(\mathscr{V}, \alpha)$ as the Hilbert $C(X)$-bimodule obtained by \emph{twisting the homeomorphism $\alpha$ by the line bundle $\mathscr{V}$}. We refer to $\mathscr{V}$ as the \emph{twist}.

\begin{proposition}[{\cite[Propostion 3.7]{AAFGJSV2024}}] \label{prop.CharOfFull}
Let $\E$ be a non-zero Hilbert $C(X)$-bimodule. Suppose that $\E$ is finitely generated and projective as a right Hilbert $C(X)$-module. Then there exist a compact metric space $Y \subset X$, line bundle $\mathscr{V}= [T, p, X]$ and  homeomorphisms $\alpha : X \to Y$, $\beta : Y \to X$ such that $\E_{C(X)} \cong \Gamma(\mathscr{V})$, $_{C(X)}\E \cong \Gamma(\beta^*\mathscr{V})$ and $f \xi = \xi f \circ \alpha$ for every $f \in C(X)$ and every $\xi \in \E$. If $\E$ is left full, then we may take $X = Y$ and $\E \cong \Gamma(\mathscr{V}, \alpha)$.
\end{proposition}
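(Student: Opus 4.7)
The plan is to combine Serre--Swan (for the right Hilbert module structure) with the Hilbert bimodule compatibility relation $\xi_1 \langle \xi_2, \xi_3\rangle_\E = {}_\E\langle \xi_1, \xi_2\rangle \xi_3$ (to pin down the left action). First I would fix, via Serre--Swan, a unitary isomorphism $U: \E \to \Gamma(\mathscr V)$ of right Hilbert $C(X)$-modules for some Hermitian vector bundle $\mathscr V$ over $X$, and transport the left $C(X)$-action to a $*$-homomorphism $\varphi: C(X) \to \mathcal L(\Gamma(\mathscr V))$. The statement then reduces to showing that $\mathscr V$ is a line bundle and that $\varphi(f)\xi = \xi\,(f \circ \alpha)$ for an appropriate homeomorphism $\alpha$.

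The compatibility relation says that $\varphi({}_\E\langle \xi_1, \xi_2\rangle)$ coincides with the generalised rank-one operator $\theta_{\xi_1, \xi_2} : \xi_3 \mapsto \xi_1\langle \xi_2, \xi_3\rangle_\E$, so $\varphi$ sends the closed ideal $J := \overline{{}_\E\langle \E, \E\rangle} \subseteq C(X)$ into $\mathcal K(\Gamma(\mathscr V)) \cong \Gamma(\mathrm{End}(\mathscr V))$. Density of rank-one operators in $\mathcal K$ gives surjectivity of $\varphi|_J$ onto $\mathcal K(\Gamma(\mathscr V))$, and a short Cauchy--Schwarz-type calculation gives injectivity, so $\varphi|_J : J \xrightarrow{\cong} \Gamma(\mathrm{End}(\mathscr V))$ is a $*$-isomorphism.

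The crucial deduction, which I expect to be the main obstacle, is forcing $\mathscr V$ to be a line bundle. Since $J$ is an ideal in commutative $C(X)$, the algebra $\Gamma(\mathrm{End}(\mathscr V))$ is commutative; but fibrewise $\mathrm{End}(\mathscr V)_x \cong M_{\mathrm{rank}_x(\mathscr V)}(\mathbb C)$, so $\mathrm{rank}_x(\mathscr V) \leq 1$ everywhere, and combining with $\E \neq 0$ (handling any zero-rank components via right-fullness or by restricting to the clopen support of $\mathscr V$) yields a genuine line bundle. With $\mathscr V$ a line bundle, $\mathcal K(\Gamma(\mathscr V))$ identifies canonically with $C(X)$ via scalar multiplication on each fibre, so $\varphi|_J$ becomes a $*$-isomorphism $C_0(Y) \xrightarrow{\cong} C(X)$ where $J = C_0(Y)$; Gelfand duality promotes it to a homeomorphism $\alpha : X \to Y$ with $Y \subseteq X$ closed. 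Unravelling the isomorphism in local trivialisations of $\mathscr V$ then shows that $f \cdot \xi = \xi\,(f \circ \alpha)$ for all $f \in C(X)$ and $\xi \in \E$, and pulling $\mathscr V$ back along $\beta := \alpha^{-1} : Y \to X$ realises ${}_{C(X)}\E$ as $\Gamma(\beta^* \mathscr V)$. In the left-full case $J = C(X)$, which forces $Y = X$ and $\alpha : X \to X$ a self-homeomorphism, giving $\E \cong \Gamma(\mathscr V, \alpha)$.
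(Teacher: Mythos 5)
Your proposal is correct and follows essentially the same route as the proof in \cite{AAFGJSV2024} that the paper merely cites: Serre--Swan for the right module, the bimodule compatibility relation identifying $J=\overline{{}_\E\langle\E,\E\rangle}$ isomorphically with $\mathcal{K}(\E)\cong\Gamma(\mathrm{End}(\mathscr V))$, commutativity of that ideal forcing fibrewise rank at most one, and Gelfand duality producing $\alpha$ and the formula $f\xi=\xi\,(f\circ\alpha)$. One small point to tighten: a closed ideal of $C(X)$ is $C_0$ of an \emph{open} set, and it is only because $J\cong\mathcal{K}(\E)$ is unital (as $\E$ is finitely generated projective) that this open set is compact, hence clopen, which is what lets you read off a homeomorphism $\alpha:X\to Y$ onto a compact subset $Y\subset X$.
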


\subsubsection{Orbit-breaking bimodules $\E_Y$}
The main objects of focus of this paper are the orbit-breaking subbimodules $\E_Y$  of  $\E=\Gamma(\mathscr V,\alpha)$, which we define below. 

\begin{definition}
Let $X$ be an infinite compact metric space, $\alpha : X \to X$ a homeomorphism and $\mathscr{V}$ a line bundle over $X$. Let $\E = \Gamma(\mathscr{V}, \alpha)$. For a non-empty closed subset $Y \subset X$,  \emph{the orbit-breaking bimodule} is defined  to be the subbimodule
\[ \E_Y := C_0(X \setminus Y) \E \]
that is obtained by restricting the left action on $\E$ to functions vanishing on $Y$. Note that $\E_Y$ is neither left nor right full as a Hilbert $C(X)$-bimodule.
\end{definition}

\subsection{Cuntz--Pimsner algebras}
Cuntz--Pimsner algebras were originally introduced by Pimsner \cite{Pimsner1997} and subsequently generalized by Katsura \cite{Katsura2003}. The class of Cuntz--Pimsner algebras includes all Cuntz--Krieger algebras, crossed products by $\mathbb{Z}$, crossed products by partial homeomorphisms and several other naturally occurring $\mathrm{C}^*$-algebras, see \cite[Section 3]{Katsura2003}.

 \begin{definition}[{\cite[Definition 2.1]{Katsura2004}}]  \label{def:cov rep} Let  $\mathcal{E}$ be a \mbox{$\mathrm{C}^*$-correspondence} over a $\mathrm{C}^*$-algebra $A$ with structure map $\varphi_\E : A \to \mathcal{L}(\mathcal{E})$. A \emph{representation} $(\pi, \tau)$ of $\E$ on a $\mathrm{C}^*$-algebra $B$ consists of a $^*$-homomorphism $\pi: A \to B$ and a linear map $\tau: \mathcal{E} \to B$ satisfying
 \begin{enumerate}
 \item $\pi(\langle \xi, \eta \rangle_\E) = \tau(\xi)^*\tau(\eta)$, for every $\xi, \eta  \in \E$;
 \item $\pi(a)\tau(\xi) = \tau(\varphi_\E(a)\xi)$, for every $\xi \in \E$, $a \in A$.
 \end{enumerate} 
Note that (1) and the $\mathrm{C}^*$-identity imply $\tau(\xi) \pi(a) = \tau(\xi a)$ for every $\xi \in \E$ and $a \in A$.
 
Let $\psi_{\tau}  : \mathcal{K}(\E) \to B$ be the $^*$-homomorphism defined on rank one operators by 
\[ \psi_{\tau}(\theta_{\xi, \eta}) = \tau(\xi)\tau(\eta)^*, \qquad \xi, \eta \in \E.\]
 We say that the representation $(\pi , \tau)$ is \emph{covariant} \cite[Definition 3.4]{Katsura2004} if in addition
\begin{enumerate}[resume]
\item $\pi(a) =  \psi_{\tau}(\varphi_\E(a))$, for every $a \in J_{\E}:=\varphi_\E^{-1}(\mathcal{K}(\E))\cap (\ker \varphi_\E)^{\perp}.$
\end{enumerate}
\end{definition}
In \cite[Section 4]{Katsura2004} Katsura constructs a \emph{universal covariant representation} $(\pi_u, \tau_u)$ satisfying the following: for any covariant representation $(\pi, \tau)$ of $\E$, there exists a surjective $^*$-homomorphism $\rho \colon \mathrm{C}^*(\pi_u, \tau_u) \rightarrow \mathrm{C}^*(\pi, \tau)$ such that $\pi=\rho \circ \pi_u$ and $\tau=\rho \circ \tau_u$, where ${\mathrm C}^*(\pi,\tau)$ is the  $\mathrm{C}^*$-subalgebra of $B$ generated by $\pi(A)$ and $\tau(\E)$.

\begin{definition}[{\cite[Definition 3.5]{Katsura2004}, see also \cite[Definition 2.4]{AEE:Cross}}] Let $A$ be a $\mathrm{C}^*$-algebra and let $\mathcal{E}$ be a \mbox{$\mathrm{C}^*$-correspondence} over $A$. The \emph{Cuntz--Pimsner algebra of $\mathcal{E}$ over $A$}, denoted $\mathcal{O}_A(\mathcal{E})$, is the $\mathrm{C}^*$-algebra $\mathrm{C}^*(\pi_u, \tau_u)$ generated by the universal covariant representation of $\E$. We write $\mathcal{O}(\E)$ if the \mbox{$\mathrm{C}^*$-algebra} $A$ is understood. 
\end{definition}

When $\E$ is a Hilbert $A$-bimodule, there is an equivalent construction due to Abadie, Eilers and Exel \cite{AEE:Cross}, which they call the crossed product of $A$ by the Hilbert $A$-bimodule $\E$.
By \cite[Proposition 3.7]{AAFGJSV2024} the $C(X)$-correspondences of the form $\Gamma(\mathscr{V}, \alpha)$ can be given the structure of a Hilbert bimodule only when $\mathscr{V}$ is a line bundle.

\begin{examples}\label{ex:putnam} Let $\alpha$ be a homeomorphism on an infinite compact metrizable space $X$, $\mathscr{V} = [T, p, X]$ a line bundle, and $\E:=\Gamma(\mathscr V,\alpha)$.
\begin{enumerate}
\item  If $\mathscr{V}$  is the trivial line bundle, then $\mathcal{O}(\E)$ is   isomorphic to the usual crossed product,  $\mathcal{O}(\E) \cong C(X) \rtimes_\alpha \mathbb{Z}$. 
\item At the other extreme, if $\alpha = \id_X$ is the trivial homeomorphism, then $\mathcal{O}(\E)$ is  a commutative $\mathrm{C}^*$-algebra, and actually $\mathcal{O}(\E)\cong C(P)$ where $P$ denotes the total space of the principal circle bundle $\mathcal{P}$ associated to the line bundle $\mathscr{V}$. (For further details see Section 7 in \cite{AAFGJSV2024}.)
\item Let  $Y \subset X$ be a non-empty closed subset of $X$ and $\E_Y: = C_0(X \setminus Y)\E$ be the orbit-breaking bimodule. If $\mathscr{V}$ is trivial, then $\mathcal{O}(\E_Y) \cong \mathrm{C}^*(C(X), C_0(X \setminus Y)u)\subset C(X) \rtimes_\alpha \mathbb{Z}$ is the orbit-breaking subalgebra introduced by Putnam in \cite{Putnam:MinHomCantor} and further studied in \cite{LinQPhil:KthoeryMinHoms, DPS:DynZ, DPS:OrbitBreaking} and elsewhere.
\end{enumerate}
\end{examples}

A representation $(\pi, \tau)$ of $\E$ is said to admit a \emph{gauge action} if, for each $z \in \mathbb{T}$, there exists a $^*$-homomorphism $\beta_z : \mathrm C^*(\pi, \tau) \to \mathrm C^*(\pi, \tau)$ such that $\beta_z(\pi(a)) = \pi(a)$ and $\beta_z(\tau(\xi)) = z \tau(\xi)$ for every $a \in A$ and every $\xi \in \E$ \cite[Definition 5.6]{Katsura2004}. The gauge-invariant uniqueness theorem for Cuntz--Pimsner algebras is stated as follows.

\begin{theorem}[{\cite[Theorem 6.4]{Katsura2004}}] \label{thm:gauge}
    For a covariant representation $(\pi,\tau)$ of a $\mathrm C^*$-correspondence $\E$ over a $\mathrm C^*$-algebra $A$, the surjective $^*$-homomorphism $\rho  : \mathcal{O}(\E) \to \mathrm C^*(\pi, \tau)$ is an isomorphism if and only if $(\pi, \tau)$ is injective (that is, $\pi$ is injective) and admits a gauge action.
\end{theorem}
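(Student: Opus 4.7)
The plan is to prove both directions of the biconditional, with the forward direction (necessity) being essentially formal and the reverse direction (sufficiency) being the substantive content.

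For necessity, I would first observe that the universal covariant representation $(\pi_u,\tau_u)$ is known to be injective (this follows from the standard construction via Fock-type representations, where one exhibits an explicit injective covariant representation and uses universality). Thus if $\rho$ is an isomorphism, $\pi=\rho\circ\pi_u$ is injective. Next, the universal property of $\mathcal{O}(\E)$ together with the fact that for each $z\in\mathbb T$ the pair $(\pi_u,z\tau_u)$ is again a covariant representation yields a canonical gauge action $\gamma$ on $\mathcal{O}(\E)$; conjugating $\gamma$ by $\rho^{-1}$ gives the required gauge action $\beta$ on $\mathrm C^*(\pi,\tau)$.

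For sufficiency, assume $\pi$ is injective and $\mathrm C^*(\pi,\tau)$ carries a gauge action $\beta$. Since $\rho$ is surjective it suffices to prove injectivity. The standard strategy is to use a conditional expectation argument. First I would integrate $\beta$ against normalized Haar measure on $\mathbb T$ to obtain a faithful conditional expectation
\[
\Phi_\beta:\mathrm C^*(\pi,\tau)\to \mathrm C^*(\pi,\tau)^\beta,\qquad \Phi_\beta(b)=\int_{\mathbb T}\beta_z(b)\,dz,
\]
onto the fixed-point subalgebra (the \emph{core}), and similarly $\Phi_\gamma$ on $\mathcal O(\E)$. These expectations intertwine $\rho$, i.e.\ $\rho\circ\Phi_\gamma=\Phi_\beta\circ\rho$, because both are limits of Cesàro averages of gauge-conjugates and $\rho$ is equivariant. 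Faithfulness of $\Phi_\beta$ (a general fact for conditional expectations arising from compact group actions) then reduces the problem to proving that $\rho$ restricted to the core $\mathcal O(\E)^\gamma$ is injective: if $\rho(x)=0$ then $0=\Phi_\beta(\rho(x^*x))=\rho(\Phi_\gamma(x^*x))$, so by injectivity on the core $\Phi_\gamma(x^*x)=0$, hence $x=0$.

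The main obstacle, and the technical heart of the proof, is establishing injectivity of $\rho$ on the core. I would realize $\mathcal O(\E)^\gamma$ as the closure of an increasing union of subalgebras $B_n$, where $B_n$ is generated by $\pi_u(A)$ together with $\psi_{\tau_u^{(k)}}(\mathcal K(\E^{\otimes k}))$ for $0\le k\le n$; here $\tau_u^{(k)}(\xi_1\otimes\cdots\otimes\xi_k)=\tau_u(\xi_1)\cdots\tau_u(\xi_k)$ and $\psi_{\tau_u^{(k)}}$ is the induced map on compacts. By continuity it suffices to prove injectivity on each $B_n$. I would proceed by induction on $n$. The base case $n=0$ is the injectivity of $\pi$, which is the hypothesis. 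For the inductive step, I would use the covariance condition (item (3) in Definition~\ref{def:cov rep}) applied to $J_\E=\varphi_\E^{-1}(\mathcal K(\E))\cap(\ker\varphi_\E)^\perp$ to describe $B_n\cap B_{n-1}$ precisely, and then show that if $\rho$ is injective on $B_{n-1}$ and on the "top layer" $\psi_{\tau_u^{(n)}}(\mathcal K(\E^{\otimes n}))$ (this layer being essentially a Morita-equivalent reincarnation of $\pi$ and thus injective), the two injectivities glue across the intersection thanks to the covariance identity. The delicate point—where Katsura's notion of $T$-pair or the ideal $J_\E$ really does work—is controlling elements of $B_n$ that pick up contributions from both $\psi_{\tau_u^{(n)}}(\mathcal K(\E^{\otimes n}))$ and $B_{n-1}$ via the covariance relation; a short exact sequence / pullback diagram expressing $B_n$ in terms of $B_{n-1}$ and $\mathcal K(\E^{\otimes n})$ allows the five-lemma-style conclusion. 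This inductive argument on $n$ is where I expect the bulk of the technical bookkeeping, and it is the step I would anticipate being the main obstacle.
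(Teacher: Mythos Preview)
The paper does not prove this theorem; it is quoted verbatim from \cite[Theorem~6.4]{Katsura2004} and used as a black box (e.g.\ in Proposition~\ref{prop:gauge action}). So there is no ``paper's own proof'' to compare against. That said, your outline is the standard argument and is essentially what Katsura does: reduce to the core via the faithful conditional expectation coming from averaging the gauge action, then run an induction over the filtration $B_n$ of the core, with the covariance relation on $J_\E$ controlling the gluing at each stage. Your identification of the inductive step on the core as the technical heart, and of the role of $J_\E$ in describing $B_n\cap B_{n-1}$, is accurate.
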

 
The gauge action induces a topological grading 
\[ \mathcal{O}(\E) = \overline{\bigoplus_{n\in \mathbb Z} E_n}
\] where $E_n := \{ a \in \mathcal{O}_A(\E) \mid \beta_z(a) = z^n a\}.$
Evidently each $E_n$ is itself a Hilbert $A$-bimodule. For $n > 0$, we have $E_n\cong  \E^{\otimes n}$, while for $n=0$, $E_0 \cong A$.

To describe $E_n$ for $n < 0$, let
\[ \widehat{\E} = \{ \hat{\xi} \mid \xi \in \E\},\]
where $\hat\xi:\E\to A$ is an $A$-bilinear map defined by
$\hat{\xi}(\eta) = \langle \xi, \eta \rangle_\E$, $\xi, \eta \in \E.$
Then $\hat{\xi} \in \E^{\dagger}$, the dual Hilbert $A$-bimodule consisting of bounded $A$-linear maps $\phi : \E \to A$, with actions given by
\[ (a \phi  b)(\xi) = a b^* \phi(\xi), \qquad a, b \in A, \,\phi \in \E^{\dagger}.
\]
 When $\E$ is finitely generated projective as both a left and right $A$-module, we have that $\E^{\dagger} = \widehat{\E}$, see the discussion following Theorem 1.3 in \cite{Miscenko1979}. We make $\widehat{\E}$ into a Hilbert $A$-bimodule by defining
\[
{}_{\widehat{\E}}\langle \hat{\xi_1}, \hat{\xi_2} \rangle := \langle \xi_1, \xi_2 \rangle_\E, \qquad \langle \hat{\xi_1}, \hat{\xi_2} \rangle_{\widehat{\E}} := {}_\E\langle \xi_1, \xi_2 \rangle,
\]
for every $\xi_1, \xi_2 \in \E$. For $n <0$, we have $E_n \cong \widehat{\E}^{\otimes -n}$. 

Note that if  $\E^\dagger = \widehat{\E}$, then $\E$ is invertible in the sense that
\[ \widehat{\E} \otimes_A \E \cong \E \otimes_A \widehat{\E} \cong A.\]

\section{Tensor products of  Hilbert \texorpdfstring{$C(X)$-}{}bimodules \texorpdfstring{$\Gamma(\mathscr V, \alpha)$}{}} \label{sec:TP}

As we will use the tensor products of line bundles in the construction of the building blocks of the RSH decomposition of an orbit-breaking algebra, we discuss the relation between the tensor products of Hilbert $C(X)$-bimodules and the Hilbert $C(X)$-bimodule constructed from the tensor product of the underlying line bundles and the composition of homeomorphisms.

For a line bundle $\mathscr V=[T, p, X]$ and a homeomorphism $\alpha : X \to X$, denote by 
\[ \mathscr V^{(0)}:= X\times \mathbb C\]
the trivial line  bundle over $X$, and  for each $n \geq 1$, define a line bundle $\mathscr{V}^{(n)}$ by
\begin{align} \label{eq:LBtensor}
	\mathscr{V}^{(n)} &:=(\alpha^{n-1})^*\mathscr{V} \otimes (\alpha^{n-2})^*\mathscr{V} \otimes \cdots \otimes \alpha^*\mathcal{\mathscr{V}} \otimes  \mathscr{V}.
\end{align}
Whenever $\mathscr V|_{U_i}$, $\alpha^i(x)\in U_i$, is trivial via $h_{U_i}$ for all $i=0, \dots, n-1$,  we have a chart map 
\[(x,\lambda)\mapsto \lambda\, h_{U_{n-1}}(\alpha^{n-1}(x),1)\otimes\cdots\otimes h_{U_1}(\alpha(x),1)\otimes h_{U_0}(x,1)\] 
of the vector bundle $\mathscr V^{(n)}$ over $\cap_{i=0}^{n-1}\alpha^{-i}(U_i)$.

Note that  the fibre  $\mathscr V^{(n)}_x$ at $x\in X$ can be written as 

\begin{equation}\label{V^n}  
\mathscr{V}_x^{(n)} = \mathscr{V}_{\alpha^{m}(x)}^{(n-m)} \otimes \mathscr{V}_x^{(m)}
\end{equation}
whenever  $0\leq m\leq n$. 
In fact, 
\begin{align*} 	\mathscr{V}_x^{(n)} &=\mathscr{V}_{\alpha^{n-1}(x)} \otimes \mathscr{V}_{\alpha^{n-2}(x)} \otimes \cdots \otimes \mathscr{V}_{\alpha^{m}(x)} \otimes \mathscr{V}_{\alpha^{m-1}(x)} \otimes \cdots \otimes \mathscr{V}_{\alpha(x)}\otimes \mathscr{V}_{x} \\
	&=\mathscr{V}_{\alpha^{n-1-m}(\alpha^{m}(x))} \otimes\cdots \otimes \mathscr{V}_{\alpha^{m}(x)} \otimes \mathscr{V}_{\alpha^{m-1}(x)} \otimes \cdots \otimes \mathscr{V}_{\alpha(x)}\otimes  \mathscr{V}_{x} \\
	&= \mathscr{V}_{\alpha^{m}(x)}^{(n-m)} \otimes \mathscr{V}_x^{(m)}.
\end{align*}

The following result is surely well known when the homeomorphism $\alpha$ is trivial  and should be in the literature, however we could not find a reference. 

\begin{proposition}\label{tensortosection}
Let $\alpha:X\to X$ be a homeomorphism on a compact metric space $X$ and $\mathscr{V}$ be a line bundle over $X$. Then
there is a  Hilbert $C(X)$-bimodule  isomorphism
\[
\psi: \Gamma(\mathscr{V},\alpha)^{\otimes n} \to \Gamma( \mathscr{V}^{(n)} , \alpha^n)
\]
such that for $\xi_1 \otimes \xi_2 \otimes  \cdots \otimes \xi_n\in \Gamma(\mathscr{V},\alpha)^{\otimes n}$, we have
\[
 \psi(\xi_1 \otimes \xi_2 \otimes  \cdots \otimes \xi_n)(x) = \, \xi_1(\alpha^{n-1}(x)) \otimes \xi_2(\alpha^{n-2}(x)) \otimes  \cdots \otimes  \xi_n(x).
\]
If $n=0$, then  by $\psi$ we mean the identity map on $C(X)$. 
\end{proposition}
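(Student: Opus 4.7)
The plan is to define $\psi$ on elementary tensors by the displayed formula, extend it linearly, and verify four properties: (i) $C(X)$-middle linearity, so that it descends to the balanced tensor product; (ii) compatibility with both left and right $C(X)$-actions; (iii) preservation of the right $C(X)$-valued inner product, so that it extends isometrically to the completion; and (iv) surjectivity, established via a partition of unity argument using the generating sections from \eqref{generator}. The cases $n=0$ and $n=1$ are tautological, so assume $n\geq 2$.

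For (i), given adjacent factors $\xi_k, \xi_{k+1}$ and $f \in C(X)$, the elementary tensors $(\xi_k f) \otimes \xi_{k+1}$ and $\xi_k \otimes (f\cdot \xi_{k+1})$, where the left action is $f \cdot \xi = \xi (f \circ \alpha)$, both map under $\psi$, when evaluated at $x$, to a fiberwise tensor in which the scalar $f(\alpha^{n-k}(x))$ appears between the $k$-th and $(k{+}1)$-st slots; since these slots are one-dimensional, the scalar slides freely across the tensor, so the two images agree. For (ii), compatibility with the right $C(X)$-action is immediate from the formula, while the left action $f \cdot (\xi_1 \otimes \cdots \otimes \xi_n)$ can be pushed, via the middle-linearity just established, across all $n$ factors, producing an extra factor of $f \circ \alpha^n$ on the rightmost entry; evaluation at $x$ then gives multiplication by $f(\alpha^n(x))$, which is precisely the left action on $\Gamma(\mathscr V^{(n)}, \alpha^n)$.

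For (iii), iterating the standard inner product formula for tensor products of Hilbert bimodules, and using that the left action is twisting by $\alpha$, yields on elementary tensors
\[
\langle \xi_1 \otimes \cdots \otimes \xi_n, \eta_1 \otimes \cdots \otimes \eta_n \rangle(x) = \prod_{k=1}^n \langle \xi_k(\alpha^{n-k}(x)), \eta_k(\alpha^{n-k}(x)) \rangle_{\mathscr V_{\alpha^{n-k}(x)}}.
\]
On the codomain side, the fiber identification \eqref{V^n} and the canonical inner product on a tensor product of one-dimensional Hermitian spaces produce exactly the same product of scalars, so $\psi$ is isometric and extends to the completion. For (iv), fix an atlas $\{(U_j, h_j)\}$ for $\mathscr V$ with subordinate partition of unity $\{\gamma_j\}$, and let $\eta_j$ be the sections from \eqref{generator}. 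The image $\psi(\eta_{j_{n-1}} \otimes \cdots \otimes \eta_{j_0})$ is supported on $V_{j_0,\ldots,j_{n-1}} := \bigcap_{i=0}^{n-1} \alpha^{-i}(U_{j_i})$ and coincides there with $\prod_{i=0}^{n-1} \gamma_{j_i}(\alpha^i(x))^{1/2}$ times the local trivializing section of $\mathscr V^{(n)}$ described after \eqref{eq:LBtensor}. Since $\{\prod_{i=0}^{n-1} \gamma_{j_i} \circ \alpha^i\}$ is a partition of unity subordinate to the open cover $\{V_{j_0,\ldots,j_{n-1}}\}$, these sections generate $\Gamma(\mathscr V^{(n)})$ as a right $C(X)$-module, so the image of $\psi$ is dense; being an isometry, $\psi$ is then surjective.

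The main bookkeeping obstacle is keeping index conventions consistent: the $k$-th tensor factor $\xi_k$ is evaluated at $\alpha^{n-k}(x)$ while the $k$-th slot of $\mathscr V^{(n)}_x = \mathscr V_{\alpha^{n-1}(x)} \otimes \cdots \otimes \mathscr V_x$ lives over $\alpha^{n-k}(x)$. A clean way to handle this is an induction on $n$, using \eqref{V^n} with $m = 1$ at each step so that the inductive hypothesis handles an $(n{-}1)$-fold tensor product while only the new rightmost factor must be matched with the base fiber $\mathscr V_x$.
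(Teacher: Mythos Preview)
Your proposal is correct and follows essentially the same route as the paper's proof: define $\psi$ on elementary tensors, verify bimodule compatibility, check inner-product preservation by unwinding the iterated formula, and deduce surjectivity from the fact that the images of the generating sections $\eta_{j_{n-1}}\otimes\cdots\otimes\eta_{j_0}$ generate $\Gamma(\mathscr V^{(n)})$. Your explicit check of middle $C(X)$-linearity (so that $\psi$ descends to the balanced tensor product) is a detail the paper leaves implicit, and your density-plus-isometry argument for surjectivity is a minor repackaging of the paper's explicit preimage formula; otherwise the arguments coincide.
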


\begin{proof} If $\xi_i\in \Gamma(\mathscr V)$, $i=1,\dots, n$, then clearly the map 
\[x\mapsto \xi_1(\alpha^{n-1}(x))\otimes\cdots \otimes \xi_{n-1}(\alpha(x))\otimes\xi_n(x), \ x\in X,\] is a continuous section in $\Gamma(\mathscr V^{(n)})$, and thus we have a multilinear map sending $(\xi_1, \dots, \xi_n)\in \Gamma(\mathscr V)\times\cdots\times \Gamma(\mathscr V)$ to this section in $\Gamma(\mathscr V^{(n)})$. Then there is a linear map $\psi:\Gamma(\mathscr V)^{\otimes n}\to \Gamma(\mathscr V^{(n)})$ satisfying the identity in the proposition. For $f\in C(X)$, 
\[ \psi(\xi_1 \otimes \xi_2 \otimes  \cdots \otimes \xi_n f)(x) = (\psi(\xi_1 \otimes \xi_2 \otimes  \cdots \otimes \xi_n )f)(x)\] is rather obvious, and also we have
\begin{align*}
\lefteqn{\psi(f\cdot\xi_1 \otimes \xi_2 \otimes  \cdots \otimes \xi_n)(x)} \\ 
&=\psi(\xi_1 \otimes \xi_2 \otimes  \cdots \otimes \xi_n f\circ\alpha^n)(x)\\
&=\xi_1(\alpha^{n-1}(x))\otimes \cdots \otimes \xi_{n-1}(\alpha(x))\otimes \xi_n(x) f\circ\alpha^n(x)\\
&=  f\cdot\psi(\xi_1 \otimes \xi_2 \otimes  \cdots \otimes \xi_n)(x).       
\end{align*}  Thus $\psi$ is a $C(X)$-bimodule map from $\Gamma(\mathscr V,\alpha)^{\otimes n}$ to $\Gamma(\mathscr V^{(n)},\alpha^n)$.   

To show that $\psi$ preserves the inner product, with $\E:=\Gamma(\mathscr V,\alpha)$, we calculate 
\begin{align*}
\lefteqn{\langle \xi_1 \otimes   \cdots \otimes \xi_n, \eta_1 \otimes  \cdots \otimes \eta_n \rangle_{\E^{\otimes n}} (x)}\\
&= \langle \xi_2 \otimes  \cdots \otimes \xi_n,\, \langle \xi_1, \eta_1 \rangle_\E \eta_2 \otimes  \cdots \otimes \eta_n \rangle_{\E^{\otimes n-1}}(x)\\
&= \langle \xi_2 \otimes  \cdots \otimes \xi_n,  \eta_2 \otimes  \cdots \otimes \eta_n\rangle_{\E^{\otimes n-1}}(x)\langle \xi_1, \eta_1 \rangle_\E \circ \alpha^{n-1}(x)\\
&=\quad \cdots\\
&= \langle \xi_n,  \eta_n \rangle_\E (x)\langle \xi_{n-1}, \eta_{n-1} \rangle_\E \circ \alpha(x)\cdots\langle \xi_1, \eta_1 \rangle \circ \alpha^{n-1}(x),
\end{align*}
and
\begin{align*}
\lefteqn{\langle \psi(\xi_1 \otimes \xi_2 \otimes  \cdots \otimes \xi_n), \psi( \eta_1 \otimes \eta_2 \otimes  \cdots \otimes \eta_n) \rangle_{\Gamma(\mathscr V^{(n)})} (x)} \\
&= \langle\xi_1(\alpha^{n-1}(x)) \otimes \cdots\otimes \xi_n(x), \eta_1(\alpha^{n-1}(x)) \otimes   \cdots \otimes \eta_n(x) \rangle_{\mathscr V^{(n)}_x}\\
&= \langle\xi_1(\alpha^{n-1}(x)), \eta_1(\alpha^{n-1}(x)) \rangle_{\mathscr V_{\alpha^{n-1}(x)}} \cdots \langle \xi_n(x),   \eta_n(x) \rangle_{\mathscr V_x},
\end{align*}
and compare them.  
We see that these two match  up to the order of multiplication since \[\langle \xi_i,\eta_i\rangle_\E (\alpha^{n-i}(x))=\langle \xi_i(\alpha^{n-i}(x)),\eta_i (\alpha^{n-i}(x))\rangle_{\mathscr V_{\alpha^{n-i}(x)}}.\]
This inner product preserving property implies that $\psi$ is injective.

Let $\eta_j$ be the generators of $\E=\Gamma(\mathscr V)$ as in (\ref{generator}). From the definition of $\mathscr V^{(n)}$,  one can show that the sections $\psi(\eta_{i_1}\otimes \eta_{i_2}\otimes \cdots\otimes \eta_{i_n})$  generate $\Gamma( \mathscr{V}^{(n)} , \alpha^n)$. Then
for $\xi \in \Gamma(\mathscr{V}^{(n)},\alpha^n)$,
the element 
 \begin{align*}
\mu &:=\sum_{i_1,i_2,\ldots,i_n}    \eta_{i_1}\otimes \eta_{i_2}\otimes \cdots\otimes \eta_{i_n} \langle \psi(\eta_{i_1}\otimes \eta_{i_2}\otimes \cdots\otimes \eta_{i_n}), \xi\rangle_{\Gamma(\mathscr V^{(n)})}	
\end{align*} in $\E^{\otimes n}$ 
satisfies $\psi(\mu)=\xi$.  
Thus $\psi$ is surjective.
\end{proof}

We end this section with the following proposition about orbit-breaking bimodules which will be used in the following sections.

\begin{proposition}[see {\cite[Lemma 6.2]{AAFGJSV2024}}]\label{EandSections}
Let $\alpha:X\to X$ be a homeomorphism and $\mathscr{V}$ be a line bundle on $X$. Let $\psi:\mathcal E^{\otimes m}\to \Gamma(\mathscr V^{(m)},\alpha^m)$ be the isomorphism given by Proposition \ref{tensortosection} with $\E=\Gamma(\mathscr V,\alpha)$. Let $Y \subset X$ be a non-empty closed subset and denote by $\E_Y$ the associated orbit-breaking bimodule. Then for any element $\xi \in \mathcal{E}^{\otimes m}$, we have that $\xi \in \mathcal{E}_Y^{\otimes m}:=(\E_Y)^{\otimes m}$ if and only if  $\psi(\xi)$  vanishes on $\alpha^{-1}(Y)\cup \alpha^{-2}(Y)\cup \cdots\cup \alpha^{-m}(Y)$. If $m=0$, then this set is empty and by $\psi$ we mean the identity map of $C(X)$.
\end{proposition}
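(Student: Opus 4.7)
My plan is to establish both directions of the equivalence by directly computing $\psi$ on factored elements of $\E_Y^{\otimes m}$ and invoking the $C(X)$-balanced structure to reduce the characterization to a statement about closed ideals of $C(X)$.

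For the forward implication, I would take a simple tensor $\xi_1 \otimes \cdots \otimes \xi_m \in \E_Y^{\otimes m}$ and write each $\xi_i = f_i \cdot \eta_i$ with $f_i \in C_0(X \setminus Y)$ and $\eta_i \in \E$. Using the twisted left action $f_i \cdot \eta_i = \eta_i (f_i \circ \alpha)$ together with the formula from Proposition~\ref{tensortosection}, the $i$-th slot of $\psi(\xi_1 \otimes \cdots \otimes \xi_m)(x)$ carries the scalar factor $f_i(\alpha^{m-i+1}(x))$, which vanishes precisely when $x \in \alpha^{-(m-i+1)}(Y)$. Letting $i$ run from $1$ to $m$ shows $\psi(\xi_1 \otimes \cdots \otimes \xi_m)$ vanishes on $\bigcup_{j=1}^{m} \alpha^{-j}(Y)$; linearity and continuity then extend the conclusion to arbitrary elements of $\E_Y^{\otimes m}$.

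For the converse, the key observation is that the $C(X)$-balancing together with the bimodule twist allows every $f_i$ to be slid to the rightmost factor. Concretely, I would verify the identity
\begin{equation*}
(f_1 \cdot \eta_1) \otimes \cdots \otimes (f_m \cdot \eta_m) = (\eta_1 \otimes \cdots \otimes \eta_m) \cdot \prod_{i=1}^{m} (f_i \circ \alpha^{m-i+1})
\end{equation*}
in $\E^{\otimes m}$ by repeatedly using $f \cdot \xi = \xi (f \circ \alpha)$ and the balancing relation $\xi h \otimes \zeta = \xi \otimes h \cdot \zeta$. This realizes $\E_Y^{\otimes m}$ as $\E^{\otimes m} \cdot I$, where $I$ is the closed ideal of $C(X)$ generated by all such products. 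Since the $i$-th factor ranges over $C_0(X \setminus \alpha^{-(m-i+1)}(Y))$ as $f_i$ varies, the ideal $I$ is the product $\prod_{j=1}^{m} C_0(X \setminus \alpha^{-j}(Y))$, and as in a commutative $\mathrm{C}^*$-algebra the product of closed ideals coincides with their intersection, we get $I = C_0(X \setminus Z)$ with $Z = \bigcup_{j=1}^{m} \alpha^{-j}(Y)$.

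Since $\psi$ is a right $C(X)$-module isomorphism onto $\Gamma(\mathscr V^{(m)})$, the image $\psi(\E_Y^{\otimes m})$ coincides with $\Gamma(\mathscr V^{(m)}) \cdot C_0(X \setminus Z)$. A standard approximate-identity argument---using that any $\sigma \in \Gamma(\mathscr V^{(m)})$ vanishing on the closed set $Z$ satisfies $\sigma e_\lambda \to \sigma$ for any bounded approximate identity $(e_\lambda)$ of $C_0(X\setminus Z)$---shows this submodule is exactly the set of sections of $\mathscr V^{(m)}$ vanishing on $Z$, yielding the claim. The case $m=0$ is the stated convention that $\psi = \id_{C(X)}$, with no vanishing condition imposed. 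The main subtlety I anticipate is the careful bookkeeping of compositions with $\alpha$ as functions migrate across tensor factors; once the slide formula above is pinned down, the remainder reduces to routine facts about ideals in commutative $\mathrm{C}^*$-algebras and about Hilbert modules.
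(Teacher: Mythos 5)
Your proof is correct, and the forward direction coincides with the paper's: both slide each $f_i$ to the right using $f\cdot\xi=\xi(f\circ\alpha)$ and the balancing relation, producing the same scalar factor $\prod_{i}(f_i\circ\alpha^{m-i+1})$ (note only that this factor vanishes \emph{at least} on $\alpha^{-(m-i+1)}(Y)$, not ``precisely'' there, which is all you need). For the converse the routes genuinely differ. The paper works with the explicit frame $\{\eta_{i_1}\otimes\cdots\otimes\eta_{i_m}\}$ from Proposition~\ref{tensortosection}: it expands $\xi=\sum\eta_{i_1}\otimes\cdots\otimes\eta_{i_m}\,f_{i_1,\dots,i_m}$ with coefficients $f_{i_1,\dots,i_m}=\langle\eta_{i_1}\otimes\cdots\otimes\eta_{i_m},\xi\rangle_{\E^{\otimes m}}$ vanishing on $Z:=\bigcup_{j=1}^m\alpha^{-j}(Y)$, and then distributes the $m$\textsuperscript{th} roots $(f_{i_1,\dots,i_m}\circ\alpha^{-j})^{1/m}$ over the tensor slots to exhibit $\xi$ as a sum of simple tensors from $\E_Y$. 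You instead identify $\E_Y^{\otimes m}$ with $\overline{\E^{\otimes m}\,C_0(X\setminus Z)}$ and invoke the standard approximate-identity fact that $\overline{\Gamma(\mathscr V^{(m)})\,C_0(X\setminus Z)}$ is exactly the set of sections vanishing on the closed set $Z$; this is cleaner and avoids choosing a frame. Be aware, though, that your identification requires both inclusions, and the nontrivial one, $\overline{\E^{\otimes m}\,C_0(X\setminus Z)}\subseteq\E_Y^{\otimes m}$, is precisely where the paper's root-distribution trick reappears in disguise: a general $g\in C_0(X\setminus Z)$ must be written as (a limit of sums of) products $g_1\cdots g_m$ with $g_j\in C_0(X\setminus\alpha^{-j}(Y))$ (for instance via $I=I^m$ for the closed ideal $I=C_0(X\setminus Z)$), and each $g_j$ slid back into the $j$\textsuperscript{th} slot as $f_j=g_j\circ\alpha^{-(m-j+1)}\in C_0(X\setminus Y)$. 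Once that step is spelled out the two arguments have comparable content; yours is more conceptual and transports to other ideal-theoretic statements, while the paper's is self-contained given the frame already constructed.
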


\begin{proof} 
If $\xi=  f_1\eta_1 \otimes \cdots \otimes f_m\eta_m $ for some $\eta_i \in \mathcal{E}$, $f_i\in C_0(X\setminus Y)$, $i=1,2,\ldots,m$, then 
\begin{align*}
\xi  &= f_1\eta_1 \otimes \cdots \otimes f_m\eta_m   \\
& =  \eta_1 \otimes \cdots \otimes  \eta_m (f_{1}\circ \alpha^m)\cdots(f_{m-1}\circ \alpha^2)( f_m\circ \alpha).
\end{align*}
Thus $\psi(\xi)(x)=0$ whenever   $x \in \alpha^{-1}(Y)\cup  \cdots\cup \alpha^{-m}(Y)$. 
Since such $\xi$ generate  $\mathcal{E}_Y^{\otimes m}$, the ``only if'' part follows. 
 
Conversely, suppose  $\xi \in \mathcal{E}^{\otimes m} $ satisfies  $\psi(\xi)(x) = 0$ for every $x \in \alpha^{-1}(Y)\cup \alpha^{-2}(Y)\cup \cdots\cup \alpha^{-m}(Y)$. 
Let $\{h_j :U_j \times \mathbb{C} \to \mathscr{V}|_{U_j}\}_{j=1,\ldots,n}$ be an atlas for $\mathscr{V}$ and $\gamma_1,\ldots,\gamma_n$ a
partition of unity subordinate to $U_1,\ldots,U_n$. Then $\eta_j$, $1 \leq j \leq n$, given by $\eta_j(x)= h_j(x,{\gamma_j}^{1/2}(x))$,  generate $\E$ as a right Hilbert $C(X)$-module, and so  $\eta_{i_1}\otimes  \cdots\otimes \eta_{i_m}$, $1\leq i_1, \dots,  i_m\leq n$, generates $\E^{\otimes m}$. 
 The functions 
 \[
 f_{i_1,i_2,\ldots,i_m} (x) := \langle \psi(\eta_{i_1}\otimes \cdots\otimes \eta_{i_m}), \psi(\xi)\rangle_{\Gamma(\mathscr V^{(m)})} (x)= \langle \eta_{i_1}\otimes \cdots\otimes \eta_{i_m}, \xi\rangle_{\E^{\otimes m}} (x)
 \]
 vanish on $\alpha^{-1}(Y)\cup \alpha^{-2}(Y)\cup \cdots\cup \alpha^{-m}(Y)$. 
 Since  the functions $f_{i_1,i_2,\ldots,i_m}$  are linear combinations of positive ones, we may assume that they are positive. Then we have
 \begin{align*}
\xi &=\sum_{1\leq i_1, \dots,i_m\leq n}     \eta_{i_1}\otimes\cdots\otimes \eta_{i_m} \langle \eta_{i_1}\otimes \cdots\otimes \eta_{i_m}, \xi\rangle_{\E^{\otimes m}} \\
& =\sum_{1\leq i_1, \ldots,i_m\leq n}   \eta_{i_1}\otimes \cdots\otimes \eta_{i_m} f_{i_1,i_2,\ldots,i_m} \\
& =\sum_{1\leq i_1, \ldots,i_m\leq n}  \big[(f_{i_1,i_2,\ldots,i_m}\circ \alpha^{-m})^{1/m} \eta_{i_1}\otimes (f_{i_1,i_2,\ldots,i_m}\circ \alpha^{-m+1})^{1/m} \eta_{i_2}\otimes \cdots \\
 &\qquad\qquad\qquad\qquad\qquad\qquad \qquad\qquad \cdots\otimes (f_{i_1,i_2,\ldots,i_m}\circ \alpha^{-1})^{1/m} \eta_{i_m}\big].	
\end{align*}
Thus  $\xi \in  \mathcal{E}_Y^{\otimes m} $.
\end{proof}

\section{Orbit-breaking algebras} \label{sec:OBA}

Generalizing Example~\ref{ex:putnam}.(3), we define orbit-breaking algebras via orbit-breaking bimodules. The definition of orbit-breaking algebras in this context first appeared in \cite{AAFGJSV2024}.

\begin{definition}
    Let $\alpha : X \to X$ be a homeomorphism on an infinite compact metric space $X$ and $\mathscr{V}$ be a line bundle over $X$. Let $\E=\Gamma(\mathscr{V},\alpha)$. Suppose that $Y \subseteq X$ is a non-empty closed subset of $X$. The \emph{orbit-breaking algebra of $\mathcal{O}(\E)$} at $Y$ is the Cuntz--Pimsner algebra $\mathcal{O}(\E_Y)$ of the orbit breaking submodule $\E_Y=C_0(X\setminus Y)\E$.
\end{definition}

Note that the orbit-breaking algebra $\mathcal{O}(\E_Y)$ can be identified with the $\mathrm C^*$-subalgebra $\mathrm C^*(C(X), C_0(X \setminus Y) \E)$ of $\mathcal{O}(\E)$. 

Let $A$ be a $\mathrm C^*$-algebra and $\E$ a $\mathrm{C}^*$-correspondence over $A$ with the structure map $\varphi_\E$. 
We say that $\E$ is \emph{minimal} if, for any non-zero ideal $J \subset A$, if $\langle \E, \varphi_\E(J) \E \rangle_\E \subset J$, then $J = A$.  This definition is a generalization of the corresponding definition for a single homeomorphism $\alpha : X \to X$  on a compact metric space: We say that $\alpha$ is \emph{minimal} if, for every non-empty closed subset $Y \subset X$, if $\alpha(Y) \subset Y$, then $Y = X$.

\begin{theorem}
    Let $X$ be an infinite compact metric space, $\alpha : X \to X$ a homeomorphism, and $\mathscr{V}$ a line bundle over $X$. Let $Y \subset X$ be a non-empty closed subset. Then
    \begin{enumerate}
        \item $\Gamma(\mathscr{V}, \alpha)$ is minimal if and only if $\alpha$ is minimal if and only if the crossed product $\mathcal{O}(\Gamma(\mathscr{V}, \alpha)) $ is simple. 
        \item If $\alpha$ is minimal, then $\mathcal{O}(C_0(X \setminus Y) \Gamma(\mathscr{V}, \alpha))$ is simple if and only if $Y \cap \alpha^n(Y) = \emptyset$ for every $n \in \mathbb{Z} \setminus \{0\}$.
        \item Both $\mathcal{O}(\Gamma(\mathscr{V}, \alpha))$ and $\mathcal{O}(C_0(X \setminus Y) \Gamma(\mathscr{V}, \alpha))$ are nuclear and satisfy the UCT.
    \end{enumerate}
    \end{theorem}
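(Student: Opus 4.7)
My plan is to treat the three items separately, exploiting the explicit description of $\Gamma(\mathscr V,\alpha)$ and its tensor powers from Section~\ref{sec:TP}, together with known simplicity and UCT results for Cuntz--Pimsner algebras.

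For (1), I would first establish the equivalence between bimodule minimality of $\E := \Gamma(\mathscr V,\alpha)$ and minimality of $\alpha$ via the Gelfand correspondence between closed subsets of $X$ and ideals of $C(X)$. Writing $J = C_0(X\setminus Y)$ for a closed subset $Y$, the identity $\varphi_\E(f)\xi = \xi(f\circ\alpha)$ combined with right-fullness of $\E$ gives $\overline{\langle \E, \varphi_\E(J)\E\rangle_\E} = C_0(X\setminus\alpha^{-1}(Y))$, so that the containment $\langle \E, \varphi_\E(J)\E\rangle_\E \subseteq J$ is equivalent to $\alpha(Y)\subseteq Y$. Thus bimodule minimality of $\E$ coincides with the usual minimality of $\alpha$. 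For the equivalence with simplicity of $\mathcal{O}(\E)$, since $\mathscr V$ is a line bundle, $\E$ is an invertible Hilbert bimodule, and one may invoke the Abadie--Eilers--Exel/Katsura simplicity criterion: minimality together with the aperiodicity that is automatic once $X$ is infinite and $\alpha$ is minimal yields simplicity. Conversely, any non-trivial closed $\alpha$-invariant proper subset $Y \subsetneq X$ produces a proper ideal of $\mathcal{O}(\E)$ generated by $C_0(X\setminus Y)$.

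For (2), assume $\alpha$ is minimal. If $Y \cap \alpha^n(Y) \neq \emptyset$ for some $n \neq 0$, a proper closed subset invariant under the orbit-broken correspondence can be manufactured from the orbit closure of the intersection, yielding a proper non-zero ideal of $\mathcal{O}(\E_Y)$. Conversely, assume the family $\{\alpha^n(Y)\}_{n \in \mathbb{Z}}$ is pairwise disjoint. Given a non-zero ideal $I$ of $\mathcal{O}(\E_Y)$, I would analyse $I \cap C(X) = C_0(X \setminus Z)$ for some closed $Z\subsetneq X$ and use Proposition~\ref{EandSections}, whose content is that elements of $\E_Y^{\otimes m}$ vanish on $\bigcup_{j=1}^m \alpha^{-j}(Y)$, to show that $Z$ must be $\alpha$-invariant away from the orbit of $Y$; the disjointness hypothesis plus minimality of $\alpha$ then forces $Z = \emptyset$. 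Applying the gauge-invariant uniqueness theorem (Theorem~\ref{thm:gauge}) to the quotient $\mathcal{O}(\E_Y)/I$ yields $I = \mathcal{O}(\E_Y)$. This adapts Putnam's original orbit-breaking argument to the twisted correspondence setting.

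For (3), nuclearity is a direct consequence of Katsura's nuclearity result for Cuntz--Pimsner algebras of countably generated correspondences over nuclear $\mathrm{C}^*$-algebras, since $C(X)$ is nuclear and $\E, \E_Y$ are finitely generated projective right $C(X)$-modules. For the UCT, the Toeplitz algebra of a $C(X)$-correspondence is $KK$-equivalent to $C(X)$ and so lies in the UCT class, and $\mathcal{O}(\E)$ sits in the Katsura six-term exact sequence, all of whose other terms satisfy the UCT; closure of the UCT class under extensions then gives the UCT for $\mathcal{O}(\E)$, and the same argument applies to $\mathcal{O}(\E_Y)$. The main obstacle I anticipate is the reverse direction of (2): because $\E_Y$ is a genuine correspondence rather than a bimodule (the left action is no longer full or unital), the usual bimodule-crossed-product ideal arguments do not directly apply, and one must carefully use Proposition~\ref{EandSections} to track how the orbit-breaking constrains the invariant ideals of $C(X)$ under the tensor powers of the correspondence.
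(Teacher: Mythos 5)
Your arguments for (1) and (3) are sound and essentially reconstruct the results the paper simply cites (the paper's entire proof is a reference to \cite[Proposition 3.9, Corollary 3.11]{AAFGJSV2024} for (1), to \cite{Kettner2024-2} for (2), and to \cite[Theorem 7.3, Proposition 8.8]{Katsura2004} for (3)). In particular the computation $\overline{\langle \E, \varphi_\E(J)\E\rangle_\E}=C_0(X\setminus\alpha^{-1}(Y))$ correctly identifies bimodule minimality with minimality of $\alpha$, and the Toeplitz-extension argument for nuclearity and the UCT is exactly Katsura's.

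The genuine gap is in the converse direction of (2). Writing $I\cap C(X)=C_0(X\setminus Z)$ and arguing that $Z=\emptyset$ presupposes $I\cap C(X)\neq 0$, and that is the entire difficulty. The gauge-invariant uniqueness theorem (Theorem~\ref{thm:gauge}) applies only to covariant representations admitting a gauge action, so it can only rule out non-trivial \emph{gauge-invariant} ideals; for an arbitrary ideal $I$ with $I\cap C(X)=0$ the quotient $\mathcal{O}(\E_Y)/I$ need not carry a gauge action, and GIUT yields nothing. One must separately prove that every non-zero ideal of $\mathcal{O}(\E_Y)$ intersects $C(X)$ non-trivially, which requires an aperiodicity/topological-freeness argument for the \emph{broken} dynamics; this is precisely the content of the cited \cite{Kettner2024-2} (and of the weaker statement in \cite{AAFGJSV2024}), and your proposal does not supply it. A smaller imprecision occurs in the forward direction of (2): under a minimal $\alpha$ the $\alpha$-orbit closure of any non-empty set is all of $X$, so "the orbit closure of the intersection" cannot be the proper invariant set you want. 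What does work is the finite broken-orbit segment $\{\alpha(y'),\dots,\alpha^{n}(y')\}$ determined by a point $y'\in Y\cap\alpha^{-n}(Y)$; being finite, closed, and invariant for the orbit-broken relation, it yields a proper non-zero gauge-invariant ideal, but this should be stated in terms of the broken dynamics rather than the $\alpha$-orbit.
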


    \begin{proof}
That $\Gamma(\mathscr{V}, \alpha)$ is minimal if and only if $\alpha$ is minimal if and only if the crossed product $\mathcal{O}(\Gamma(\mathscr{V}, \alpha)) $ is simple follows from \cite[Proposition 3.9, Corollary 3.11]{AAFGJSV2024}. Suppose that $\alpha$ is minimal. That $\mathcal{O}(C_0(X \setminus Y) \Gamma(\mathscr{V}, \alpha))$ is simple if and only if $Y \cap \alpha^n(Y) \neq \emptyset$  for $n \neq 0$ follows from \cite{Kettner2024-2}. (Note that this is a slight improvement of the result stated in \cite{AAFGJSV2024}). Applying \cite[Theorem 7.3, Proposition 8.8]{Katsura2004} to Cuntz--Pimsner algebras $\mathcal{O}(\Gamma(\mathscr{V},\alpha))$  and $\mathcal{O}(C_0(X \setminus Y) \Gamma(\mathscr{V}, \alpha))$ yields (3).
    \end{proof}


\section{Recursive subhomogeneous algebras} \label{sec:rsh} 
In this section, we begin by recalling the definition of and terminology for recursive subhomogeneous algebras. 

Next, we recall the recursive subhomogeneous algebra structure of orbit-breaking subalgebras for crossed products by minimal homeomorphisms. This is a special case of our construction.

We then provide the conceptual steps for our more general orbit-breaking algebras,  $\mathcal O(C_0(X \setminus Y) \Gamma(\mathscr{V}, \alpha))$ for $X$ an infinite compact metric space, $\alpha : X \to X$ a minimal homeomorphism, $\mathscr{V}$ a line bundle over $X$ and $Y \subset X$ a closed subset with non-empty interior. The aim is to highlight the similarities and differences to the construction in the crossed product case, as well as to aid the reader in understanding the technical constructions in subsequent sections.
 
\subsection{Recursive subhomogeneous algebras}
Let $A$, $B$ and $C$ be $\mathrm{C}^*$-algebras and $\varphi : A \to C$ and $\rho : B \to C$ be homomorphisms. The \emph{pullback} $A \oplus_{C} B$ is defined to be the algebra of all $(a,b) \in A \oplus B$ such that $\varphi(a)=\rho(b)$.
\begin{definition}[\cite{Phillips:recsub}]
    A \emph{recursive subhomogeneous algebra} is a $\mathrm{C}^*$-algebra given by the following recursive definition.
   \begin{enumerate}
        \item If $X$ is a compact Hausdorff space and $n \geq 1$, then $C(X, M_n)$ is a recursive subhomogeneous algebra.
        \item If $A$ is a recursive subhomogeneous algebra, $X$ is a compact Hausdorff space, $X^{(0)} \subseteq X$ is closed, $\varphi : A \to C(X^{(0)}, M_n)$ is any unital homomorphism, and $\rho : C(X, M_n) \to C(X^{(0)}, M_n)$ is the restriction homomorphism, then the pullback $A \oplus_{C(X^{(0)}, M_n)}C(X, M_n)$ is a recursive subhomogenous algebra.
\end{enumerate}
\end{definition}
 Observe that any recursive subhomogeneous $\mathrm{C}^*$-algebra $R$ can be written in the form
\begin{equation} \label{rsh-decomp} R \cong\left[ \cdots \left[ \left[ C_0 \oplus_{C^{(0)}_1} C_1 \right] \oplus_{C_2^{(0)}} \right] \cdots \right] \oplus_{C_l^{(0)}} C_l ,
\end{equation}
 where $C_j = C(X_j, M_{n(j)})$ for compact Hausdorff spaces $X_j$, $n(j) \in \mathbb{Z}_{>0}$ and $C^{(0)}_j = C(X_j^{(0)}, M_{n(j)})$ for (possibly empty) compact subsets $X^{(0)}_j \subset X_j$, and where the maps $C_j \to C_j^{(0)}$ are the restriction maps. The expression \eqref{rsh-decomp} is called a \emph{recursive subhomogeneous decomposition} of $R$. A recursive subhomogeneous decomposition for a given $\mathrm{C}^*$-algebra is in general not unique.

\begin{definition}
Let \[ R = \left[ \cdots \left[ \left[ C_0 \oplus_{C^{(0)}_1} C_1 \right] \oplus_{C_2^{(0)}} \right] \cdots \right] \oplus_{C_l^{(0)}} C_l \]  be a recursive subhomogeneous decomposition. Then 
\begin{enumerate}  
\item $l$ is called the \emph{length} of the decomposition, 
\item the recursive subhomogeneous algebra    \[R^{(k)} = \left[ \cdots \left[ \left[ C_0 \oplus_{C^{(0)}_1} C_1 \right] \oplus_{C_2^{(0)}} \right] \cdots \right] \oplus_{C_k^{(0)}} C_k, \]     is called the \emph{$k$\textsuperscript{th} stage algebra},     
\item $X_0, \dots, X_l$ are called the \emph{base spaces} of the decomposition and their disjoint union $\bigsqcup_{j=0}^l X_j$ is called the \emph{total space},    
\item $n(0), \dots, n(j)$ are called the \emph{matrix sizes}.
\end{enumerate}
\end{definition}

\subsection{Orbit-breaking in crossed products by homeomorphisms}

Let $\alpha$ be a minimal homeomorphism on an infinite compact metrizable space $X$ and let $Y$ be a  closed subset of $X$ with non-empty interior. 
For $x \in Y$, the \emph{first return time} $r_Y(x)$ of $x$ to $Y$ is the smallest integer $n \geq 1$ such that $\alpha^n(x) \in Y$. 
Since $\alpha$ is minimal and $Y$ has non-empty interior, $\sup_{x \in Y} r_Y(x) <\infty$. A compactness argument shows that there are only finitely many distinct return times to $Y$, which we denote by $r_1 < \dots < r_K$. For  $1 \leq k \leq K$, set
\begin{equation}\label{Y-k}
  Y_k:=\{x \in Y \mid r_Y(x)=r_k\}. 
\end{equation}
For every $k=1, \dots, K$, the union $\cup_{i=1}^k Y_i$ is closed. It is easy to check that 
\begin{equation}\label{X:union}
Y=\bigsqcup_{k=1}^K Y_k, \quad \text{and} \quad X= \bigsqcup_{k=1}^K \bigsqcup_{i=0}^{r_k-1}\alpha^i(Y_k).    
\end{equation} 
The sequences \[Y_k,\alpha(Y_k),\, \dots, \,\alpha^{r_k-1}(Y_k)\] are called \emph{Rokhlin towers}. The sets $Y_k$ are the \emph{bases} of towers and $r_k$ are their \emph{heights}.

Denote by $A_Y$ the orbit-breaking subalgebra $\mathrm{C}^*(C(X), C_0(X \setminus Y)u) \subset C(X)\rtimes_\alpha \mathbb{Z}$. Here $u$ denotes the unitary implementing $\alpha$ in the crossed product.

\subsubsection{Orbit-breaking for minimal Cantor systems}
Suppose that $X$ is the Cantor space. In \cite{Putnam:MinHomCantor}, Putnam constructs Rokhlin towers for $Y$ consisting of clopen sets. Since the sets are clopen, their indicator functions are continuous and generate projections in $A_Y$. Since $\alpha^j(Y_k) \cap Y = \emptyset$ for every $1 \leq j \leq r_k-1$ we have that $\chi_{\alpha^j(Y_k)} u \in A_Y$ for every $1 \leq j \leq r_k-1$. Setting $e_{j, j-1} := \chi_{\alpha^j(Y_k)} u$, it is easy to check that $e_{j, j-1}$ generate a copy of $M_{r_k}$. This allows Putnam to show that $A_Y$ is isomorphic with a direct sum of homogeneous algebras $M_{r_k}(Y_k)$~\cite[Theorem 3.3]{Putnam:MinHomCantor}.  

\begin{example}[{\cite[Theorem 3.3]{Putnam:MinHomCantor}, see also~\cite[Lemma 11.2.22]{GioKerPhi:CRM}}]
    Let $X$ be a Cantor set, $Y$ be a non-empty compact open subset and $\alpha : X \to X$ be a minimal homeomorphism. Then each $Y_k$ is clopen and compact. Set $p_k :=\chi_{Y_k}$. Then each $p_k$ commutes with elements in $A_Y$  and we have $A_Y=\bigoplus_{k=1}^Kp_k A_Y p_k$. Then $\pi_k \colon p_k A_Y p_k \to C(Y_k, M_{r_k})$ given by \[\pi_k(f)=\diag(f|_{Y_k}, \dots, f\circ \alpha^{r_k-1}|_{Y_k}),\]
for $f \in C(X)$ and 
\[
\pi_k(\chi_{\alpha^j(Y_k)}u)=e_{j,j-1} \otimes 1
\]
is an isomorphism and we have $A_Y \cong \bigoplus_{k=1}^K C(Y_k, M_{r_k})$.
\end{example}

\subsubsection{Orbit-breaking subalgebras of arbitrary crossed products}

If $X$ is not a Cantor space and $Y$ is not clopen, then the sets $Y_k$'s are not necessarily closed. If $x \in \overline{Y_k} \setminus Y_k$, then by continuity, $\alpha^{r_k}(y) \in Y$. However, this might not be the \emph{first} return time of $y$ to $Y$. In particular, for any $1 < k \leq K$, the intersection $\overline{Y_k} \cup Y_j$ for $j < k$ may be nonempty. In this case, we can only expect a recursive subhomogeneous decomposition for $A_Y$, rather than a direct sum of homogeneous algebras. 

\begin{example}[{\cite{QLin:Ay}, see also~\cite[Theorem 11.3.19]{GioKerPhi:CRM}}] Let $X$ be a compact metric space, $\alpha : X \to X$ a minimal homeomorphism, and $Y$ a closed subset of $X$ with non-empty interior.
Set 
\[
B_Y=\bigoplus_{k=1}^K C(\overline{Y_k}, M_{r_k}).
\]
There exists an embedding 
\[ \pi : A_Y \to B_Y.\] 
 Let $\pi_k : A_Y \to C(\overline{Y_k}, M_{r_k})$ be the composition of $\pi$ with the projection onto the $k$\textsuperscript{th} summand of $B_Y$. Set $\pi^{(k)}=\oplus_{i=1}^k \pi_k$ and $B_k=\pi^{(k)}(A_Y)$. 
 
 We say that an element $b=(b_1, \dots,b_K)$ has the \emph{boundary decomposition property} if it satisfies the following: for any $x\in \overline{Y_k}\setminus Y_k$, $k=1,\dots,K$, with 
\[
x \in  \overline{Y_k} \cap Y_{t_1} \cap \alpha^{-{r_{t_1}}}(Y_{t_2}) \cap \cdots
\cap \alpha^{-(r_{t_1} + r_{t_2} + \cdots + r_{t_{m-1}})}(Y_{t_{m}})
\] 
and $r_{t_1} + r_{t_2} + \cdots + r_{t_m} = r_{ k}$, then $b_k$ is given by the block diagonal matrix
\[
b_k(x)=\diag(b_{t_1}(x), b_{t_2}(\alpha^{r_{t_1}}(x)), \dots, b_{t_s}(\alpha^{r_{t_1}+\dots+r_{t{m-1}}}(x))).
\]
An element $b=(b_1, \dots, b_K) \in B_Y$ is in $\pi(A_Y)$ if and only if $b$ has the boundary decomposition property. This result enables us to explicitly describe the recursive subhomogeneous construction of the orbit-breaking algebra $A_Y$.

The $1$\textsuperscript{st} stage algebra is given by $B_1 = C(\overline{Y_1}, M_{r_1})$. (In fact, $Y_1$ is already a closed subset.)

Fix $1 \leq k \leq K$ and let $B_{k-1}$ denote the $(k-1)$\textsuperscript{th} stage algebra. To define the $k$\textsuperscript{th} stage algebra $B_k$, we construct the pullback 
\[\xymatrix{
    B_{k} \ar[rr] \ar[d]& & C(\overline{Y_k}, M_{r_k}) \ar[d]^{\rho_{k}} \\
    B_{k-1} \ar[rr]_-{\varphi_{k}} & & C(\overline{Y_k}\setminus Y_k),}
\]
where $\rho_k$ is restriction, and 
$\varphi_k : B_{k-1} \to C(\overline{Y_k} \setminus Y_k, M_{r_k})$ is defined as follows: Let $(b_1, \dots, b_{k-1}) \in B_{k-1}$. For $k \leq j \leq K$, there are  $b_j \in C(\overline{Y_j}, M_{r_j})$ such that $b=(b_1, \dots, b_{k-1},b_k, \dots, b_K) \in \pi(A_Y)$. Since $b$ satisfies the boundary decomposition property, $b_k|_{ \overline{Y_k}\setminus Y_k}$ only depends on $(b_1, \dots, b_{k-1})$. Now, define $\varphi_k(b_1, \dots, b_{k-1})(x)=b_k(x)$. Then 
\[
B_k=B_{k-1} \oplus_{C(\overline{Y_k} \setminus Y_k, M_{r_k})} C(\overline{Y_k}, M_{r_k}).
\]
It follows that $A_Y$ has a recursive subhomogeneous decomposition given by 
\begin{equation}\label{rsh-trivial}
[\cdots[C(Y_1,M_{r_1})\oplus_{C( \overline{Y_2}\setminus Y_2, M_{r_2})}C(\overline{Y}_2, M_{r_2})] \dots \oplus_{C( \overline{Y_K}\setminus Y_K, M_{r_K})} C(\overline{Y_K}, M_{r_K})]. 
\end{equation}
\end{example}

\subsection{Generalizing to a non-trivial line bundle} Now assume further that there is a line bundle $\mathscr{V}$ over $X$ and set $\E:=\Gamma(\mathscr{V}, \alpha)$. The main goal of this paper is to show that, for $Y \subset X$ a closed subset with non-empty interior, the orbit-breaking algebra $\mathcal{O}(\E_Y)$ is a recursive subhomogeneous algebra. Here we outline the steps involved in the construction.
 
\textbf{Step 1.} In Section~\ref{sec:endomorphism bdl}, we introduce the building blocks in the recursive subhomogeneous decomposition of the orbit-breaking algebra $\mathcal{O}(\E_Y)$.  Let $\mathscr{D}^{(r_k)} := \mathscr{V}^{(0)} \oplus \dots  \oplus \mathscr{V}^{(r_k-1)}$ denote the rank $r_k$ vector bundle over $X$ given by the direct sum of the line bundles $\mathscr{V}^{(i)}$ defined in \eqref{eq:LBtensor}, and let $\mathscr M_k :=End(\mathscr{D}^{(r_k)})|_{\overline{Y}_k}$ be the endomorphism bundle (see Section~\ref{sec:endomorphism bdl}). When the twist $\mathscr{V}$ is no longer trivial, we replace $C(\overline{Y}_k, M_{r_k})$ in~\eqref{rsh-trivial} by $\Gamma(\mathscr{M}_k)$, see Theorem~\ref{thm:rsh}.\\

\textbf{Step 2.} In Section~\ref{sec:embedding}, we construct an embedding from the orbit-breaking algebra $\mathcal{O}(\E_Y)$ into $\Gamma(\mathscr{M}_1) \oplus \dots \oplus \Gamma(\mathscr{M}_K)$. For this, we introduce a covariant representation $(\pi_k, \tau_k)$ of the Hilbert $C(X)$-bimodule $\E_Y$ to $\Gamma(\mathscr{M}_k)$ which induces a $*$-homomorphism, again denoted by $\pi_k$, from the orbit-breaking algebra to $\Gamma(\mathscr{M}_k)$, for all $1 \leq k \leq K$. In Proposition~\ref{prop:gauge action} we show that $\pi=\oplus_{k=1}^K\pi_k$ is injective and its image admits a gauge action, which, by the gauge invariant uniqueness theorem, implies that $\pi$ is a $^*$-isomorphism from $\mathcal{O}(\E_Y)$ onto its image.\\

\textbf{Step 3.} To describe the pullbacks in the RSH structure, we need to understand the image of the $*$-representation $\pi$. We generalize the boundary decomposition property for elements in $\Gamma(\mathscr{M}_1) \oplus \dots \oplus \Gamma(\mathscr{M}_K)$ in Definition~\ref{bd property}. Then by Theorem~\ref{thm:bdp} the range of $\pi$ is precisely the set of all elements with boundary decomposition property.\\

\textbf{Step 4.} Finally we obtain the main result of the paper. The boundary decomposition property of elements in the image of $\pi$ enables us to define $*$-homomorphisms 
\[\varphi_k :(\oplus_{i=1}^{k} \pi_i) (\mathcal{O}(\E_Y))\to \Gamma(\mathscr M_{k+1}|_{\overline{Y_{k+1}}\setminus Y_{k+1}}),\] 
$k=1, \dots, K-1$. Then by Theorem~\ref{thm:rsh}, the pullback 
\[(\oplus_{i=1}^{k} \pi_i)(\mathcal{O}(\E_Y)) \oplus_{\Gamma(\mathscr{M}_{k+1}|_{\overline{Y_{k+1}} \setminus Y_{k+1}})}\Gamma(\mathscr{M}_k)\] via $\varphi_k$ and the restriction map $\rho_k: \Gamma(\mathscr M_{k+1})\to \Gamma(\mathscr M_{k+1}|_{\overline{Y_{k+1}}\setminus Y_{k+1}})$ is equal to $(\oplus_{i=1}^{k+1} \pi_i)(\mathcal{O}(\E_Y))$.
That is,
\begin{align*}
  \mathcal{O}(\E_Y)\cong &\ \pi(\mathcal{O}(\E_Y))\\
  = &\  [\cdots [\Gamma(\mathscr{M}_1)\oplus_{\Gamma(\mathscr{M}_2|_{\overline{Y_2} \setminus Y_2})} \Gamma(\mathscr{M}_2)]\oplus \cdots ]\oplus_{\Gamma(\mathscr{M}_K|_{\overline{Y_K} \setminus Y_K})} \Gamma(\mathscr{M}_K).  
\end{align*}
Since the section algebra of each matrix bundles has a recursive subhomogeneous decomposition \cite[Proposition 1.7]{Phillips:recsub}, we conclude that the iterated pullback describing the orbit-breaking algebra also has an RSH decomposition.



\section{Endomorphism bundles}\label{sec:endomorphism bdl}
In this section we construct endomorphism bundles  of vector bundles constructed from line bundles. The section algebras of these endomorphism bundles will be the building blocks in the recursive subhomogeneous decomposition of the orbit-breaking algebra. We discuss the local trivialization of these bundles in detail.

First we recall that for a vector bundle $\mathscr W$ of rank $n$ over $X$, the {\it endomorphism bundle} $End(\mathscr W)$ is a vector bundle  over $X$ whose fibre at $x$ consists of all  linear maps on the fibre vector space $\mathscr W_x$ of $\mathscr W$, namely $End(\mathscr W)_x=End(\mathscr W_x)$ for every $x$ (see \cite[6.8 Example]{Hus:fibre}).

 Let $\mathcal U$ be an open cover of $X$ and let $\{(U, h_U : U \times \mathbb{C}^n\to \mathscr W|_U)\}_{U\in \mathcal U}$ be an atlas of $\mathscr W$ with transition functions $\{g_{U,V}\}_{U,V \in \mathcal{U}}$. Then $\mathcal{U}$ induces an atlas $\{(U, H_U : U \times  M_n(\mathbb{C})  \to End(\mathscr W)|_U)\}$ of $End(\mathscr W)$ where the $H_U$ are defined by
\begin{equation}\label{chart-endobundle} H_U(x, M):=h_U(x,\cdot)\circ M\circ h_U(x, \cdot)^{-1}\in End(\mathscr W_x),\end{equation} 
for $M\in M_n(\mathbb C)$. This is represented in the following diagram  with $h_{U,x}:=h_U(x,\cdot)$:
\begin{center}
\begin{tikzpicture}[align=center,node distance=3cm]
  \node (A) {$\mathbb{C}^n$};
  \node (B) [below= 1cm of A] {$\mathscr W_x$} ;
  \node (C) [right=of A] {$\mathbb{C}^n$};
  \node (D) [right=of B] {$\mathscr W_x$.};
  \draw[-stealth] (A)-- node[left] {\small $h_{U,x}$} (B);
  \draw[-stealth] (B)-- node [below] {\small $H_U(x,M)$} (D);
  \draw[-stealth] (A)-- node [above] {\small $M$} (C);
  \draw[-stealth] (C)-- node [right] {\small $h_{U,x}$} (D);
\end{tikzpicture}
\end{center}
 Moreover,   whenever $U, V \in \U$ satisfy $U \cap V \neq \emptyset$, the map
\[
H_V^{-1} \circ H_U : (U\cap V)\times M_n(\mathbb{C})\to (U\cap V)\times M_n(\mathbb{C})
\]
is well defined and satisfies
\begin{align*}
H_V^{-1} \circ H_U (x,M) &= \left (x, (h_{V, x})^{-1}\circ  h_{U, x} \circ M \circ (h_{U, x})^{-1} \circ h_{V, x}  \right)\\
&= \left (x, \mathrm{Ad}(g_{V,U}(x)) M \right),
\end{align*} 
where $\mathrm{Ad}(u)$ for a unitary $u\in M_n(\mathbb C)$ denotes the automorphism of $M_n(\mathbb C)$ such that $\mathrm{Ad}(u)M=uMu^*$.


\subsubsection{Endomorphism bundles $\mathscr{M}^{(n)}$} Let $\mathscr V$ be a line bundle over $X$ and $\alpha : X \to X$ a homeomorphism. With
\[
\mathscr{V}^{(n)} :=(\alpha^{n-1})^*\mathscr{V} \otimes (\alpha^{n-2})^*\mathscr{V} \otimes \cdots \otimes \alpha^*\mathcal{\mathscr{V}} \otimes  \mathscr{V}
\]
as in \eqref{eq:LBtensor}, let $\mathscr{D}^{(m,n)}$ and $\mathscr{D}^{(n)}$ be vector bundles over $X$ given by
\begin{align*}
\mathscr{D}^{(n)} &:=\mathscr{V}^{(0)} \oplus \mathscr{V}^{(1)} \oplus \mathscr{V}^{(2)} \oplus \cdots \oplus \mathscr{V}^{(n-1)} ,\\ \mathscr{D}^{(m,n)}&:= \mathscr{V}^{(m)} \oplus \mathscr{V}^{(m+1)} \oplus \mathscr{V}^{(m+2)} \oplus \cdots \oplus \mathscr{V}^{(m+n-1)}, 
\end{align*}
and let 
\begin{align*}
\mathscr{M}^{(n)}: = End(\mathscr{D}^{(n)}).
\end{align*}
Since  $\mathscr{D}^{(n)}$ is of rank $n$, the bundle $\mathscr{M}^{(n)}$ has fibre $\mathscr M^{(n)}_x=End(\mathscr D^{(n)}_x)$ isomorphic to $M_n(\mathbb{C})$.


\subsubsection{Local trivialization of $\mathscr M^{(n)}$} \label{sec:loctriv}

We fix a finite open cover $\mathcal U=\{U_j\}_j$ of $X$ such that $\mathscr V$ is trivial over $U_j$ with a chart map  
\[h_{U_j}:U_j\times \mathbb C\to \mathscr{V}|_{U_j}\] for each  $U_j\in \mathcal U$. 
From the atlas
$\{(U_j, h_{U_j})\mid U_j\in \mathcal U \}$ we obtain the transition functions $g_{U_i,U_j}:U_i\cap U_j\to U(1)$ such that 
\begin{equation}\label{h transition}
h_{U_i}(x,\lambda)=h_{U_j}(x,g_{U_j,U_i}(x)\lambda)   
\end{equation}
for all $x\in U_i\cap U_j$ and  $\lambda\in \mathbb C$. 
Fix $n\in \mathbb N$ and  let
\[\mathcal U^{(n)}:=\{(U_0,U_1,\dots,U_{n-1})\mid U_j\in \mathcal U \text{ and } \cap_{j=0}^{n-1}\alpha^{-j}(U_j)\neq \emptyset\}.\] 
Note that if $n=1$, then $\mathcal U^{(1)}=\mathcal U$, and that 
\[x\in \cap_{j=0}^{n-1}\alpha^{-j}(U_j)\ \Leftrightarrow \,\alpha^j(x)\in U_j\ \text{ for all } j=1, \dots, n-1.\] 
For convenience, we say that $x$ {\it is contained in} $\mathbf U=(U_0, \dots, U_{n-1})\in \mathcal U^{(n)}$ if  $x\in\cap_{j=0}^{n-1}\alpha^{-j}(U_j)$. In this case,  we simply write $x\in \mathbf U$:
\begin{align*}
 x\in \mathbf U=(U_0, \dots, U_{n-1}) &\ \Leftrightarrow \ x\in \cap_{j=0}^{n-1}\alpha^{-j}(U_j)\\ 
 &\ \Leftrightarrow\ \alpha^j(x)\in U_j\ \text{for all } j=0, \dots, n-1.  
\end{align*}
For  $\mathbf U\in\mathcal U^{(n)}$ and $1\leq l\leq n$, the map 
\[v^{(l)}_{\mathbf U}:\cap_{j=0}^{n-1}\alpha^{-j}(U_j)\to \mathscr V^{(l)}|_{\cap_{j=0}^{n-1}\alpha^{-j}(U_j)}\] given by 
\[v^{(l)}_{\mathbf U}(x):=h_{U_{l-1}}(\alpha^{l-1}(x),1)\otimes \cdots\otimes h_{U_1}(\alpha(x),1)\otimes h_{U_0}(x,1)\] 
is a non-zero continuous local section of the line bundle $\mathscr V^{(l)}$ over the open set $\cap_{j=0}^{n-1}\alpha^{-j}(U_j)$. 
The set of all open sets $\{\cap_{j=0}^{n-1}\alpha^{-j}(U_j)\mid \mathbf U=(U_0,\dots, U_{n-1})\in \mathcal U^{(n)}\}$ is a finite open cover of $X$  
and forms an atlas of $\mathscr V^{(l)}$ together with the following maps 
\[(x,\lambda)\mapsto \lambda v^{(l)}_{\mathbf U}(x):\cap_{j=0}^{n-1}\alpha^{-j}(U_j)\times \mathbb C\to \mathscr V^{(l)}|_{\cap_{j=0}^{n-1}\alpha^{-j}(U_j)}.\]

For later use, we set up some notation: If $x\in \mathbf U\in \mathcal U^{(n)}$, then  $\alpha^i(x)\in (U_i, \dots, U_{n-1})\in \mathcal U^{(n-i)}$ for $i=0,\dots, n-1$. 
 Reflecting this point we will use the following notation:
\[\alpha^i(\mathbf U):=(U_i, \dots, U_{n-1})\in \mathcal U^{(n-i)}.\] 
Also for $1\leq m\leq n-1$ and $0\leq i\leq n-1$ with $1\leq m+i\leq n$, we write 
\begin{equation}\label{v section}
v^{(m)}_{\alpha^i(\mathbf U)}(\alpha^i(x)):=h_{U_{m+i-1}}(\alpha^{m+i-1}(x),1)\otimes\cdots\otimes h_{U_i}(\alpha^i(x),1).
\end{equation}

Since $v_{\mathbf U}^{(l)}$ is a non-zero local section of $\mathscr V^{(l)}$, each unit vector $v_{\mathbf U}^{(l)}(x)$  serves  as a basis element of the one dimensional space $\mathscr V^{(l)}_x$ for all $x\in \cap_{j=0}^{n-1}\alpha^{-j}(U_j)$. 
In turn, one has a basis of the $n$-dimensional space  $\mathscr{D}^{(n)}_x=\mathscr{V}^{(0)}_x \oplus \mathscr{V}^{(1)}_x   \oplus \cdots \oplus \mathscr{V}^{(n-1)}_x$ at $x\in \mathbf U \in \mathcal U^{(n)}$ given by 
\begin{align*}
e_{\mathbf U,0}(x) &:=(v_{\mathbf U}^{(0)}(x),\ 0,\ 0,\,\ldots\,,\ 0),\\
e_{\mathbf U,1}(x) &:=(0,\ v_{\mathbf U}^{(1)}(x),\ 0,\,\dots\, ,\ 0),\\ 
   &\ \ \vdots \\
e_{\mathbf U,n-1}(x) &:=(0,\ 0,\,\dots\,,\ 0, \ v_{\mathbf U}^{(n-1)}(x)),
\end{align*} 
with the convention that $v_{\mathbf U}^{(0)}(x):=1$. 
Then the map 
\[(x,(\lambda_0, \dots, \lambda_{n-1}))\mapsto \lambda_0 \, e_{\mathbf U,0}(x)+\cdots+\lambda_{n-1} e_{\mathbf U,n-1}(x)\] from $\cap_{j=0}^{n-1}\alpha^{-j}(U_j)\times\mathbb C^n$ onto $\mathscr D^{(n)}|_{\cap_{j=0}^{n-1}\alpha^{-j}(U_j)}$ is a chart map of $\mathscr D^{(n)}$ (by continuity of $e_{\mathbf U,i}$'s over $\cap_{j=0}^{n-1}\alpha^{-j}(U_j)$) from which we obtain a chart map 
\[
H_{\mathbf U}^{(n)} : \cap_{j=0}^{n-1}\alpha^{-j}(U_j) \times M_n(\mathbb{C}) \to \mathscr{M}_{\mathbf U}^{(n)}:=\mathscr{M}^{(n)}|_{\cap_{j=0}^{n-1}\alpha^{-j}(U_j)}
\]   
of the endomorphism bundle $\mathscr{M}^{(n)}=End(\mathscr D^{(n)})$ over $\cap_{j=0}^{n-1}\alpha^{-j}(U_j)$. 
Hence, for a linear map $\varsigma(x)\in \mathscr{M}^{(n)}_x$, $x\in \cap_{j=0}^{n-1}\alpha^{-j}(U_j)$, there exists a unique matrix  $\widetilde{\varsigma}_{\mathbf U}(x)\in M_n(\mathbb C)$ such that $H_{\mathbf U}^{(n)}(x, \widetilde{\varsigma}_{\mathbf U}(x))  = \varsigma(x)$. 
Namely, if  $\widetilde{\varsigma}_{\mathbf U}(x)$ is written as 
\[
\widetilde{\varsigma}_{\mathbf U}(x) = \scriptstyle{\begin{bmatrix}
a_{00}(x) & a_{01}(x) &a_{02}(x) & \cdots & a_{0(n-1)}(x) \\
a_{10}(x) &a_{11}(x) & a_{12}(x) & \cdots & a_{1(n-1)}(x) \\
a_{20}(x) &a_{21}(x) & a_{22}(x) & \cdots & a_{2(n-1)}(x) \\
\vdots  &\vdots  & \vdots  & \ddots & \vdots  \\
a_{(n-1)0}(x) &a_{(n-1)1}(x) & a_{(n-1)2}(x)  & \cdots & a_{(n-1)(n-1)}(x) 
\end{bmatrix}},
\]
then for each $j=0,\dots, n-1$, 
\[ \varsigma(x) e_{\mathbf U,j}(x)\\
 =  a_{0j}(x)e_{\mathbf U,0}(x) +a_{1j}(x)e_{\mathbf U,1}(x)  + \cdots + a_{(n-1)j}(x)e_{\mathbf U,n-1}(x).   
\]
When $x$ is fixed and clear in context, by abuse of notation, we simply write $\widetilde{\varsigma}_{\mathbf U}(x) = (H_{\mathbf U}^{(n)})^{-1}(\varsigma(x))$. 

\begin{definition} \label{def:induced triv}
    We call the atlas $\{\big(\cap_{j=0}^{n-1}\alpha^{-j}(U_j),H_{\mathbf U}^{(n)}\big)\}_{\mathbf U\in \mathcal U^{(n)}}$ the {\it local trivialization of $\mathscr M^{(n)}$ induced  from  the atlas}  $\{(U_j,h_{U_j})\}_{U_j\in \mathcal U}$ of $\mathscr V$.
\end{definition}

Now we consider what happens to the matrix representation of a linear map in $\mathscr M_x^{(n)}$ when we make a coordinate change in the vector space $\mathscr D^{(n)}_x$.  For this, let $\mathbf U, \mathbf V \in \mathcal{U}^{(n)}$, $x\in \mathbf U\cap \mathbf V$ and $0\leq l\leq n$. Then 
we have from (\ref{h transition}) that 
\begin{align*} 
&\ v^{(l)}_{\mathbf U}(x)\\
=&\ h_{U_{l-1}}(\alpha^{l-1}(x),1)\otimes \cdots\otimes h_{U_1}(\alpha(x),1)\otimes h_{U_0}(x,1)\\ 
=&\ \Pi_{j=0}^{l-1} g_{V_j,U_j}(\alpha^j(x))\cdot h_{V_{l-1}}(\alpha^{l-1}(x),1)\otimes \cdots\otimes h_{V_1}(\alpha(x),1)\otimes h_{V_0}(x,1)\\
=&\ g^{(l)}_{\mathbf V, \mathbf U}(x)\cdot v^{(l)}_{\mathbf V}(x),
\end{align*} 
where 
$g^{(l)}_{\mathbf V, \mathbf U}(x):=\Pi_{j=0}^{l-1}\, g_{V_j,U_j}(\alpha^j(x)).$
With the following diagonal unitary matrix
\begin{equation}\label{def:u-UV}
    u_{\mathbf{VU}}(x):= \diag(1, g_{\mathbf V, \mathbf U}^{(1)}(x),g_{\mathbf V, \mathbf U}^{(2)}(x),\ldots,g_{\mathbf V, \mathbf U}^{(n-1)}(x)),
\end{equation}
    it is easily checked that 
\begin{equation}\label{varsigma-U-V}
 \widetilde{\varsigma}_{\mathbf V}(x) = u_{\mathbf{VU}}(x) \widetilde{\varsigma}_{\mathbf U}(x) u_{\mathbf{VU}}^{-1}(x).   
\end{equation} 
In other words, 
\[H_{\mathbf U}^{(n)}(x, \widetilde{\varsigma}_{\mathbf U}(x))=H_{\mathbf V}^{(n)}(x, \mathrm{Ad}(u_{\mathbf{VU}}(x))\widetilde{\varsigma}_{\mathbf{U}}(x)),\]
 and   $x\mapsto \mathrm{Ad}(u_{\mathbf{VU}}(x)):\cap_{j=0}^{n-1}\alpha^{-j}(U_j\cap V_j)\to \Aut(M_n(\mathbb C))$ are the transition functions for the atlas 
 $\{\big(\cap_{j=0}^{n-1}\alpha^{-j}(U_j),H_{\mathbf U}^{(n)}\big)\}_{\mathbf U\in \mathcal U^{(n)}}$ of $\mathscr M^{(n)}$.

If  $\widetilde{\varsigma}_{\mathbf U}(x)$ is written as above,  
then  with  $(x)$ omitted,
\begin{align*} 
&\ \widetilde{\varsigma}_{\mathbf V}
=\ u_{\mathbf{VU}}
\widetilde{\varsigma}_{\mathbf U}
{  u_{\mathbf{VU}}}^{-1}\\
=&\ \scriptstyle{
\begin{bmatrix}
a_{00} & a_{01}{g_{\mathbf V,\mathbf U}^{(1)}}^{-1} & \cdots & a_{0(n-1)}{g_{\mathbf V,\mathbf U}^{(n-1)}}^{-1} \\
g_{\mathbf{V,U}}^{(1)}a_{10} & a_{11} &  \cdots & g_{\mathbf{V,U}}^{(1)}a_{1(n-1)}{g_{\mathbf V,\mathbf U}^{(n-1)}}^{-1} \\
\vdots  & \vdots  & \ddots  & \vdots   \\
g_{\mathbf V,\mathbf U}^{(n-1)}a_{(n-1)0} &  {g_{\mathbf V,\mathbf U}^{(n-1)}}a_{(n-1)2}{g_{\mathbf V, \mathbf U}^{(2)}}^{-1} &  \cdots & a_{(n-1)(n-1)}
\end{bmatrix}}.
\end{align*}
Note that the matrices $\widetilde{\varsigma}_{\mathbf U}(x)$ and $\widetilde{\varsigma}_{\mathbf V}(x)$ have the same diagonal elements, that is, the diagonal is invariant under the coordinate change from $\{e_{\mathbf U,i}(x)\}$ to $\{e_{\mathbf V,i}(x)\}$ for $x\in \mathbf U\cap \mathbf V$. Also if one of the matrices is $m$\textsuperscript{th} subdiagonal, then so is the other for $1\leq m\leq n-1$. 

By a {\it matrix representation} of a linear map $\varsigma(x)$ in  $\mathscr{M}^{(n)}_x$, we always mean a matrix $\widetilde{\varsigma}_{U}(x)=(H_{\mathbf U}^{(n)})^{-1}(\varsigma(x))$ for some $\mathbf U=(U_0,\dots,U_{n-1})\in \mathcal U^{(n)}$ with $x\in \mathbf U$ if not mentioned otherwise. We have the following remark from the above observations.

\begin{remark}\label{remark:matrix form}   Let $\varsigma\in \Gamma(\mathscr M^{(n)})$ and let $\widetilde{\varsigma}_{\mathbf U}(x)=(H_{\mathbf U}^{(n)})^{-1}(\varsigma(x))\in M_n(\mathbb C)$ be a matrix representation of $\varsigma(x)$ for $x$ contained in $\mathbf U\in \mathcal U^{(n)}$.  Then we have the following: 
\begin{enumerate}[left=0pt]
\item  The main diagonal is invariant: $\diag(\widetilde{\varsigma}_{\mathbf U}(x))=\diag(\widetilde{\varsigma}_{\mathbf V}(x))$ whenever $x\in \mathbf U\cap \mathbf V$ for $\mathbf{U,V}\in \mathcal U^{(n)}$. 
\item  If $\widetilde{\varsigma}_{\mathbf U}(x)$ is $m$\textsuperscript{th} subdiagonal, then so is  $\widetilde{\varsigma}_{\mathbf V}(x)$ whenever $x\in \mathbf U\cap \mathbf V$. 
\item  If $\varsigma(x)$ is represented as an  $m$\textsuperscript{th} (lower) subdiagonal matrix $\widetilde{\varsigma}_{\mathbf U}(x)=(H_{\mathbf U}^{(n)})^{-1}(\varsigma(x))$ on $\cap_{j=0}^{n-1}\alpha^{-j}(U_j)$ for $1\leq m\leq n-1$, then $\widetilde{\varsigma}_{\mathbf U}(x)$ is of the form 
 \vspace{-10pt}
 \[{}\qquad \ 
\widetilde{\varsigma}_{\mathbf U}(x) =\footnotesize{
\bbordermatrix{& & \cr 
\hfill 0 \hfill& 0 & 0  & \cdots & 0 & 0 & \cdots & 0  \cr
\hfill \vdots \hfill &\vdots & \vdots &   & \vdots &   \vdots & &\vdots   \cr
\hfill m-1 \hfill & 0 & 0  & \cdots & 0 & 0 & \cdots & 0 \cr
\hfill m \hfill & s_{m,0}(x)  & 0 & \cdots & 0 & 0 & \cdots & 0   \cr
\hfill m+1 \hfill & 0 &s_{m+1,1}(x)  & \cdots & 0& 0 & \cdots & 0   \cr
\hfill \vdots \hfill  &\vdots &\vdots    & \ddots & \vdots & \vdots & \cdots & \vdots   \cr
\hfill n-1 \hfill &0 & 0  & \cdots & s_{n-1,n-1-m}(x)& 0 & \cdots & 0   \cr
}},
\]
where each $s_{i,j}$ is a continuous function on $\cap_{j=0}^{n-1}\alpha^{-j}(U_j)$. 
In this case $\varsigma(x)$ maps each element $e_{\mathbf U,i}(x)$ in basis of $\mathscr{D}^{(n)}_x$ to
\begin{equation}\label{m-diagonal component}
	\varsigma(x) e_{\mathbf U,i}(x) =
\begin{cases}
	s_{m+i,i}(x)e_{\mathbf U,m+i}(x), & \text{for $0\leq i<n-m$},\\
    0, &  \text{for $n-m\leq i\leq n-1$}.
\end{cases}	
\end{equation}
We recall the definition of the basis element $e_{\mathbf U,i}(x)$ and see that $\varsigma(x)$ maps each $ \mathscr{V}^{(i)}_x$ to  $ \mathscr{V}^{(i+m)}_x $ for $0\leq i<n-m$, and furthermore, the linear map 
$$\varsigma(x)|_{\mathscr{V}^{(i)}_x}: \mathscr{V}^{(i)}_x \to \mathscr{V}^{(m+i)}_x =\mathscr{V}_{\alpha^{i}(x)}^{(m)} \otimes \mathscr{V}^{(i)}_x$$ 
determines a vector $\varsigma^{(i)}(x) \in \mathscr{V}_{\alpha^{i}(x)}^{(m)}$ such that 
\begin{equation}\label{s_k^i}
\varsigma(x)|_{\mathscr{V}^{(i)}_x}: v \mapsto \varsigma^{(i)}(x)\otimes v,\quad v \in  \mathscr{V}^{(i)}_x,
\end{equation}  
since $ \mathscr{V}^{(i)}_x$ and $\mathscr{V}_{\alpha^{i}(x)}^{(m)}$ are all one dimensional. Using the equation (\ref{m-diagonal component}) we see that
\begin{equation}
 \varsigma^{(i)}(x) =   s_{m+i,i}(x) v^{(m)}_{\alpha^i(\mathbf U)}(\alpha^i(x))
\end{equation}
since $v^{(m)}_{\alpha^i(\mathbf U)}(\alpha^i(x))=h_{U_{m+i-1}}(\alpha^{m+i-1}(x),1)\otimes\cdots\otimes h_{U_i}(\alpha^i(x),1)$  as defined in (\ref{v section}).
\end{enumerate}   
\end{remark}

Note that 
$\Gamma(\mathscr M^{(n)})$ is an algebra with respect to multiplication defined by pointwise composition of endomorphisms at each fibre. For $\varsigma\in \Gamma(\mathscr M^{(n)})$ and $x\in \mathbf U\cap \mathbf V$, we have 
\begin{align*}
 H_{\mathbf U}^{(n)}(x, (H_{\mathbf U}^{(n)})^{-1}(\varsigma(x))^*) &=\ H_{\mathbf U}^{(n)}(x, \widetilde{\varsigma}_{\mathbf U}(x)^*)\\
 &=\ H_{\mathbf V}^{(n)}(x, u_{\mathbf{VU}}\widetilde{\varsigma}_{\mathbf U}(x)^*u_{\mathbf{VU}}^*) \\ 
 &=\ H_{\mathbf V}^{(n)}(x, (u_{\mathbf{VU}}\widetilde{\varsigma}_{\mathbf U}(x)u_{\mathbf{VU}})^*) \\ 
 &=\ H_{\mathbf V}^{(n)}(x, \widetilde{\varsigma}_{\mathbf V}(x)^*) \\ 
 &=\ H_{\mathbf V}^{(n)}(x, (H_{\mathbf V}^{(n)})^{-1}(\varsigma(x))^*), 
\end{align*}
hence $\varsigma^*$ is well defined by $\varsigma^*(x):=H_{\mathbf U}^{(n)}(x, (H_{\mathbf U}^{(n)})^{-1}(\varsigma(x))^*)$. The $^*$-algebra $\Gamma(\mathscr M^{(n)})$ is in fact a $C^*$-algebra with respect to the norm 
\[
\|\varsigma\|:=\sup_{x\in X} \|(H_{\mathbf U}^{(n)})^{-1}(\varsigma(x))\|_{M_n(\mathbb C)}.
\]
Note that $\|(H_{\mathbf U}^{(n)})^{-1}(\varsigma(x))\|_{M_n(\mathbb C)}=\|(H_{\mathbf V}^{(n)})^{-1}(\varsigma(x))\|_{M_n(\mathbb C)}$, so the norm above does not depend on the choice of the set $\mathbf U$.

\section{Embedding \texorpdfstring{$\mathcal{O}(\E_Y)$}{} into a direct sum of homogeneous algebras}\label{sec:embedding}

From now on we fix an infinite compact metric space $X$, a minimal homeomorphism $\alpha : X \to X$, a line bundle $\mathscr{V}$  over $X$, and a closed subset $Y \subset X$ with non-empty interior. Put $\E := \Gamma(\mathscr{V}, \alpha)$ and $\E_Y := C_0(X \setminus Y) \E$.

As in Section~\ref{sec:rsh}, we denote by $0 < r_1 < \dots < r_K$ the finitely many first return times to $Y$ and set
\[ Y_k := \{ y \in Y \mid r_Y(y) = r_k\}.\]

\begin{definition}\label{def:M-k}
Fix a finite open cover $\mathcal U$  of $X$ as in Section~\ref{sec:loctriv}. For $k=1,2,\ldots,K$, we define $\mathscr{M}_k$ to be the restriction of the endomorphism bundle $\mathscr{M}^{(r_k)}$ to $\overline{Y_k}$, namely
\[ \mathscr{M}_k:=\mathscr{M}^{(r_k)}|_{\overline{Y_k}}=End(\mathscr{D}^{(r_k)})|_{\overline{Y_k}}.
\]    
For convenience, we write $H_{\mathbf U}^{(k)}:=H_{\mathbf U}^{(r_k)}$, $\mathbf U\in \mathcal U^{(r_K)}$, for the rest of the paper. 
\end{definition}

In  this section we show that there exists an embedding $\pi$ of the orbit-breaking algebra $\mathcal O(\E_Y)$ into  $\Gamma(\mathscr M_1)\oplus\cdots\oplus\Gamma(\mathscr M_K)$, where $\Gamma(\mathscr M_k)$ is the $\mathrm C^*$-algebra of continuous sections of the  endomorphism bundle $\mathscr M_k$. We begin by constructing a $\mathrm{C}^*$-homomorphism $\pi_k$ from $\mathcal{O}(\mathcal{E}_Y)$ into $\Gamma(\mathscr M_k)$ for every $k=1, \dots, K$.  We end the section by proving that $\pi_1$ is surjective while it is not true for $\pi_k$ when $2 \leq k \leq K$ unless $Y_k$ is closed.

To construct a $*$-homomorphism $\pi_k$ from the Cuntz--Pimsner algebra $\mathcal{O}(\E_Y)$ into the $\mathrm C^*$-algebra $\Gamma(\mathscr M_k)$, we define a covariant representation $(\pi_k, \tau_k)$ of $\E_Y$ into the $\mathrm C^*$-algebra $\Gamma(\mathscr M_k)$ which gives rise to the integrated  $^*$-homomorphism of $\mathcal O(\E_Y)$ into $\Gamma(\mathscr M_k)$ which  we will also denote by $\pi_k$.

\begin{definition} \label{def:covrep} For $f\in C(X)$, $\xi\in \E_Y$, and $x\in \overline{Y_k}$, let $\pi_k(f)(x)$ and $\tau_k(\xi)(x)$ be the linear maps on $\mathscr D_x^{(r_k)}$ given  by 
\begin{align*}
\pi_k(f)(x)(a_0, \ldots, a_{r_k-1}) &=   (f(x) a_0,  f(\alpha(x))a_1, \ldots, f(\alpha^{r_k-1}(x)) a_{r_k-1}) \\ \tau_k(\xi)(x)(a_0, \ldots, a_{r_k-1})  \\ =& ( 0, \xi(x)\otimes a_0, \xi(\alpha(x))\otimes a_1,\ldots, \xi(\alpha^{r_k-2}(x)) \otimes a_{r_k-2})   
\end{align*} 
for  $(a_0, a_1, \ldots, a_{r_k-1}) \in \mathscr D^{(r_k)}_x=\mathscr{V}^{(0)}_x \oplus \mathscr{V}^{(1)}_x \oplus \cdots \oplus \mathscr{V}^{(r_k-1)}_x$, respectively. 
Here we mean $\xi(x)\otimes a_0:=\xi(x) a_0$. 
By continuity of $f$ and $\xi$, this extends to two maps $\pi_k:C(X)\to \Gamma(\mathscr M_k)$ and $\tau_k:\E_Y\to \Gamma(\mathscr M_k)$.
\end{definition} 

Let $x\in Y_k\cap \mathbf U$, $\mathbf U=(U_0, \dots, U_{r_K-1})\in \U^{(r_K)}$, and consider  the basis $\{e_{\mathbf U,0}(x),\dots, e_{\mathbf U,r_k-1}(x)\}$ of $\mathscr D^{(r_k)}_x$.  If $\xi\in \E_Y$, then for each $j=0,\dots,r_k-2$, the scalar $\widetilde\xi(\alpha^j(x))\in \mathbb C$ with 
\begin{equation}\label{matrix entries}
\xi(\alpha^j(x))=\widetilde\xi(\alpha^j(x))h_{U_j}(\alpha^j(x), 1),
\end{equation}
satisfies
\begin{equation}\label{tau_k shift}
\tau_k(\xi)(x)e_{\mathbf U,j}(x)=\widetilde\xi(\alpha^j(x))e_{\mathbf U,j+1}(x),
\end{equation}
and for the unitary $U_x:\mathbb C^{r_k}\to \mathscr D^{(r_k)}_x$, 
\[U_x(\lambda_0, \dots, \lambda_{r_k-1})= \lambda_0 e_{\mathbf U,0}(x)+\cdots +\lambda_{r_k-1} e_{\mathbf U,r_k-1}(x), \] 
we have the  matrix representation 
\begin{equation}\label{tau_k matrix}
M_{\tau_k(\xi)}(x)=
\begin{bmatrix}
0 & 0 & \cdots & 0 & 0 & 0 \\
\widetilde{\xi}(x)  & 0 & \cdots & 0 & 0 & 0 \\
0 & \widetilde{\xi}(\alpha(x))  & \ddots & 0 & 0 & 0 \\
\vdots & \vdots & \ddots & \vdots & \vdots & \vdots \\
0 & 0 & \cdots & \widetilde{\xi}(\alpha^{r_k-3}(x))  & 0 & 0 \\
0 & 0 & \cdots  & 0 & \widetilde{\xi}(\alpha^{r_k-2}(x))  & 0
\end{bmatrix}
\end{equation} 
of $U_x^*\tau_k(\xi)(x)U_x$, so 
\[\tau_k(\xi)(x)=U_x M_{\tau_k(\xi)}(x) U_x^*.\]
This matrix representation will be used in the subsequent discussions along with the fact that the matrix depends on the choice of $\mathbf U\in \mathcal U^{(r_K)}$ with $x\in \mathbf U$, but the matrix entries differ only by multiplication by scalars of modulus one. 
For $f\in C(X)$, the corresponding matrix representation is clearly
\begin{equation}\label{pi_k matrix}
M_{\pi_k(f)}(x)=\diag(f(x), f(\alpha(x)), \dots, f(\alpha^{r_k-1}(x))).
\end{equation}

It is rather obvious that $\pi_k$ is in fact a $^*$-homomorphism, and we may write $\pi_k(f)(x)$ as 
\[
\pi_k(f)(x)= \diag(f(x), f(\alpha(x)), \ldots, f(\alpha^{r_k-1}(x)) ).    
\]
One can show that $\tau_k(\xi)^*\in \Gamma(\mathscr M_k)$ is given by at $x\in \overline{Y_k}$, 
\begin{equation}\label{xi^*}
\tau_k(\xi)^*(x)(a_0, \ldots, a_{r_k-1}) = ( {\xi(x)}^* \otimes a_1, \ldots, \xi(\alpha^{r_k-2}(x))^* \otimes a_{r_k-1}, 0),     
\end{equation}
where  
\[
\xi(x)^*\otimes a_1:=\langle \xi(x),a_1\rangle_{\mathscr V_x}
\]
and 
\[\xi(\alpha^{i-1}(x))^*\otimes a_i:=\langle \xi(\alpha^{i-1}(x)),a^{i-1}_i\rangle_{\mathscr V_{\alpha^{i-1}(x)}}\, a_i^{i-2}\otimes\cdots\otimes a_i^1\otimes a_i^0\in \mathscr V_x^{(i-1)}\]
for $a_i=a_i^{i-1}\otimes\cdots \otimes a_i^1\otimes a_i^0\in \mathscr V^{(i)}_x$, $2\leq i\leq r_k-1$.

\begin{lemma}\label{lemma:pi_and_tau} Let $\mathcal{E}_Y := C_0(X \setminus Y) \mathcal{E}$. Then we have the following.
\begin{enumerate}[label=(\alph*)]
\item[$(a)$] $f \mapsto \pi_k(f) : C(X) \to \Gamma(\mathscr{M}_k)$ is a $^*$-homomorphism.
\item[$(b)$] $\xi \mapsto \tau_k(\xi) : \mathcal{E}_Y \to \Gamma(\mathscr{M}_k)$ is a linear map.
\item[$(c)$] $\pi_k(\langle \xi, \eta \rangle_{\mathcal{E}} ) = \tau_k(\xi)^* \tau_k(\eta)$, for  $\xi,\eta\in \E_Y$. 
\item[$(d)$] $\tau_k(f\xi) = \pi_k(f) \tau_k(\xi) = \tau_k(\xi) \pi_k(f\circ \alpha)$.
\item[$(e)$] $\pi_k(_{\mathcal{E}_Y}\langle \xi, \eta \rangle ) = \tau_k(\xi)\tau_k(\eta)^*$, where $_{\mathcal{E}_Y}\langle \xi, \eta \rangle:=\langle \eta,\xi\rangle_{\mathcal E}\circ\alpha^{-1}$ is the left inner product   inherited from $\mathcal{E}_Y \subset \mathcal{E}$. 
\end{enumerate}	
\end{lemma}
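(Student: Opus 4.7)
The lemma reduces to short matrix computations in the local basis $\{e_{\mathbf U,j}(x)\}_{j=0}^{r_k-1}$ using the formulas \eqref{pi_k matrix} and \eqref{tau_k matrix}, together with a single observation about where sections in $\E_Y$ vanish. Before verifying (a)--(e) I would first check that $\pi_k(f),\tau_k(\xi)\in\Gamma(\mathscr M_k)$, since Definition \ref{def:covrep} only specifies them fibre-wise. The matrix of $\pi_k(f)(x)$ is diagonal in every chart, so by Remark \ref{remark:matrix form}(1) it is chart-independent and glues to a continuous global section. The matrix of $\tau_k(\xi)(x)$ is first subdiagonal---a chart-invariant property by Remark \ref{remark:matrix form}(2)---with entries $\widetilde\xi(\alpha^j(x))$; a direct check using \eqref{h transition} shows these transform under a chart change $\mathbf U\leadsto\mathbf V$ by multiplication with $g_{V_j,U_j}(\alpha^j(x))$, which is exactly the conjugation rule \eqref{varsigma-U-V} with the diagonal unitary $u_{\mathbf{VU}}$ of \eqref{def:u-UV}.

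Parts (a), (b), (d) are then immediate from the local matrices: diagonal multiplication handles (a), linearity in (b) is built into the formula, and for (d) the twisted left action $(f\xi)(\alpha^j(x))=\xi(\alpha^j(x))f(\alpha^{j+1}(x))$ gives all three operators the same first-subdiagonal form with $(j{+}1,j)$-entry $f(\alpha^{j+1}(x))\widetilde\xi(\alpha^j(x))$. For (c) and (e), multiplying \eqref{tau_k matrix} by its adjoint yields
\[
\tau_k(\xi)^*(x)\tau_k(\eta)(x) \;=\; \diag\bigl(\overline{\widetilde\xi(x)}\widetilde\eta(x),\ldots,\overline{\widetilde\xi(\alpha^{r_k-2}(x))}\widetilde\eta(\alpha^{r_k-2}(x)),\,0\bigr),
\]
\[
\tau_k(\xi)(x)\tau_k(\eta)^*(x) \;=\; \diag\bigl(0,\,\widetilde\xi(x)\overline{\widetilde\eta(x)},\ldots,\widetilde\xi(\alpha^{r_k-2}(x))\overline{\widetilde\eta(\alpha^{r_k-2}(x))}\bigr),
\]
and since the chart maps preserve inner products, the nonzero entries are precisely $\langle\xi,\eta\rangle_\E(\alpha^j(x))$ and ${}_{\E_Y}\langle\xi,\eta\rangle(\alpha^j(x))$. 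The right-hand sides $\pi_k(\langle\xi,\eta\rangle_\E)(x)$ and $\pi_k({}_{\E_Y}\langle\xi,\eta\rangle)(x)$ are the full diagonal versions of these, so agreement holds except possibly at the $(r_k{-}1,r_k{-}1)$-slot of (c) and the $(0,0)$-slot of (e).

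The only real subtlety, and where the orbit-breaking hypothesis is essential, is showing that these two missing slots vanish. If $\xi=g\xi'$ with $g\in C_0(X\setminus Y)$ and $\xi'\in\E$, then $\xi(y)=\xi'(y)g(\alpha(y))$, so $\xi$ vanishes on $\alpha^{-1}(Y)$. For $x\in\overline{Y_k}$, closedness of $Y$ together with the definition of the first return time $r_k$ give $\alpha^{r_k}(x)\in\overline{Y_k}\subset Y$, whence $\xi(\alpha^{r_k-1}(x))=0$ and the last entry in the product for (c) vanishes. Analogously $x\in\overline{Y_k}\subset Y$ gives $\alpha^{-1}(x)\in\alpha^{-1}(Y)$, so $\xi(\alpha^{-1}(x))=0$ and the first entry for (e) vanishes. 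Beyond this single use of the orbit-breaking structure, the proof is pure bookkeeping in the local matrix coordinates.
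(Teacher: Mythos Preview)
Your proof is correct and follows essentially the same fibrewise verification as the paper, just carried out in the local matrix coordinates \eqref{pi_k matrix}--\eqref{tau_k matrix} rather than by applying the operators directly to vectors $(a_0,\dots,a_{r_k-1})\in\mathscr D_x^{(r_k)}$ as the paper does; the key orbit-breaking input---that $\xi\in\E_Y$ vanishes on $\alpha^{-1}(Y)$, forcing the $(r_k{-}1)$\textsuperscript{th} diagonal slot in (c) and the $0$\textsuperscript{th} slot in (e) to be zero---is identical in both. Your explicit chart-compatibility check that $\pi_k(f)$ and $\tau_k(\xi)$ glue to global sections of $\mathscr M_k$ is a useful addition that the paper leaves implicit.
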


\begin{proof} (a) This is clear.  

\noindent (b) This is also clear.

\noindent 
(c)	We have
\begin{align*}
\pi_k(\langle \xi, \eta \rangle_{\mathcal{E}} )(x) & = diag( \langle \xi, \eta \rangle_{\mathcal{E}} (x),  \langle \xi, \eta \rangle_{\mathcal{E}} (\alpha(x)), \ldots, \langle \xi, \eta \rangle_{\mathcal{E}} (\alpha^{r_k-1}(x)) ), 
\end{align*}
and applying (\ref{xi^*}), 
\begin{align*}
&\ \phantom{ = } \tau_k(\xi)^* \tau_k(\eta)(x) (a_0, a_1, \ldots, a_{r_k-1})\\
&=  \tau_k(\xi)^*(x) ( 0,\eta(x)\otimes a_0, \eta(\alpha(x)) \otimes a_1, \ldots, \eta(\alpha^{r_k-2}(x)) \otimes a_{r_k-2})\\	
&=   ( \xi(x)^*\otimes\eta(x)\otimes a_0, \xi(\alpha(x))^*\otimes\eta(\alpha(x)) \otimes a_1, \\
&\qquad\qquad\qquad \ldots, \xi(\alpha^{r_k-2}(x))\otimes\eta(\alpha^{r_k-2}(x)) \otimes a_{r_k-2},0)\\	
&=   ( \langle \xi, \eta \rangle_{\mathcal{E}} (x) a_0, \langle \xi, \eta \rangle_{\mathcal{E}} (\alpha(x)) a_1, \ldots, \langle \xi, \eta \rangle_{\mathcal{E}} (\alpha^{r_k-2}(x))  a_{r_k-2}, 0)\\	
&=   ( \langle \xi, \eta \rangle_{\mathcal{E}} (x) a_0, \ldots, \langle \xi, \eta \rangle_{\mathcal{E}} (\alpha^{r_k-2}(x))  a_{r_k-2}, \langle \xi, \eta \rangle_{\mathcal{E}} (\alpha^{r_k-1}(x))  a_{r_k-1})\\
&= \diag( \langle \xi, \eta \rangle_{\mathcal{E}} (x),  \langle \xi, \eta \rangle_{\mathcal{E}} (\alpha(x)), \ldots, \langle \xi, \eta \rangle_{\mathcal{E}} (\alpha^{r_k-1}(x)) ) (a_0, a_1, \ldots, a_{r_k-1}),
\end{align*}
since $\langle \xi, \eta \rangle_{\mathcal{E}} (\alpha^{r_k-1}(x))  a_{r_k-1}=0$.

\noindent 
(d) If $f\in C(X)$ and $\xi\in \E_Y$, then for any  $(a_0, \dots, a_{r_k-1})\in \mathscr D^{(r_k)}_x$, we have 
\begin{align*}
&\ (\pi_k(f)\tau_k(\xi))(x)(a_0, a_1, \dots, a_{r_k-1})\\
=&\ \pi_k(f)(x)(0, \xi(x)\otimes a_0, \xi(\alpha(x))\otimes a_1, \dots, \xi(\alpha^{r_k-2}(x))\otimes a_{r_k-2})\\
=&\ (0, f(\alpha(x))\xi(x)\otimes a_0, f(\alpha^2(x))\xi(\alpha(x))\otimes a_1, \\ & \dots, f(\alpha^{r_k-1}(x))\xi(\alpha^{r_k-2}(x))\otimes a_{r_k-2})\\
=&\ (0, (\xi f\circ\alpha)(x)\otimes a_0, \dots, (\xi f\circ\alpha)(\alpha^{r_k-2}(x))\otimes a_{r_k-2})\\
=& \ (0, (f\xi)(x)\otimes a_0, (f\xi)(\alpha(x))\otimes a_1, \dots, (f\xi)(\alpha^{r_k-2}(x))\otimes a_{r_k-2})\\
=&\ \tau_k(f\xi)(a_0, a_1, \dots, a_{r_k-1}).
\end{align*}

\noindent
(e) Since ${}_{\mathcal{E}_Y}\langle \xi, \eta \rangle  (x)=\langle \eta,\xi\rangle_{\mathcal E}\circ\alpha^{-1}(x)=0$ for $x\in \overline{Y_k}\subset Y$ and $\xi,\eta\in \mathcal E_Y$, 
\begin{align*}
&\ \pi_k({}_{\mathcal{E}_Y}\langle \xi, \eta \rangle )(x)(a_0,\dots, a_{r_k-1})\\ 
=&\ ( {}_{\mathcal{E}_Y}\langle \xi, \eta \rangle  (x)a_0,  {}_{\mathcal{E}_Y}\langle \xi, \eta \rangle  (\alpha(x))a_1, \ldots, {}_{\mathcal{E}_Y}\langle \xi, \eta \rangle  (\alpha^{r_k-1}(x))a_{r_k-1} )	\\
=&\ ( 0,  {}_{\mathcal{E}_Y}\langle \xi, \eta \rangle  (\alpha(x))a_1, \ldots, {}_{\mathcal{E}_Y}\langle \xi, \eta \rangle  (\alpha^{r_k-1}(x)) a_{r_k-1})\\
=&\ (0, \langle \eta, \xi \rangle_\E(x)a_1, \dots, \langle \eta, \xi \rangle_\E  (\alpha^{i-1}(x))a_i,\dots, \langle \eta, \xi \rangle_\E  (\alpha^{r_k-2}(x))a_{r_k-1}).
\end{align*} 
On the other hand, with $a_i=a_i'\otimes a_i^{(i-1)}\in \mathscr V_{\alpha^{i-1}(x)}\otimes \mathscr V^{(i-1)}_x$ for $1\leq i\leq r_k-1$,
\begin{align*}
&\ \tau_k(\xi)(x)\tau_k(\eta)^*(x) (a_0, a_1, \dots, a_{r_k-1}) \\
=&\  \tau_k(\xi)(x) ( \langle\eta(x), a_1\rangle_{\mathscr V_x}, \ldots, \langle\eta(\alpha^{r_k-2}(x)),a_{r_k-1}'\rangle_{\mathscr V_{\alpha^{r_k-2}(x)}}a_{r_k-1}^{(r_k-2)}, 0)\\
=&\ (0, \xi(x)\otimes\langle\eta(x), a_1\rangle_{\mathscr V_x}, \dots, \xi(\alpha^{r_k-2}(x))\otimes\langle\eta(\alpha^{r_k-2}(x)),a_{r_k-1}'\rangle_{\mathscr V_{\alpha^{r_k-2}(x)}}a_{r_k-1}^{(r_k-2)}). 
\end{align*}
 Thus it is enough to see  that for each $1\leq i\leq r_k-1$,
  \[\langle \eta, \xi \rangle_\E  (\alpha^{i-1}(x))a_i=\xi(\alpha^{i-1}(x))\otimes \langle \eta(\alpha^{i-1}(x)),a_i'\rangle_{\mathscr V_{\alpha^{i-1}(x)}}a_i^{(i-1)}.\]  
  But the left hand side
\[\langle \eta, \xi \rangle_\E  (\alpha^{i-1}(x))a_i
= \langle \eta(\alpha^{i-1}(x)), \xi(\alpha^{i-1}(x)) \rangle_{\mathscr V_{\alpha^{i-1}(x)}} a_i'\otimes a_i^{(i-1)}
\] must equal to the right hand side
\begin{align*}
    &\xi(\alpha^{i-1}(x))\otimes \langle \eta(\alpha^{i-1}(x)),a_i'\rangle_{\mathscr V_{\alpha^{i-1}(x)}}a_i^{(i-1)}
\\ &=\langle \eta(\alpha^{i-1}(x)),a_i'\rangle_{\mathscr V_{\alpha^{i-1}(x)}}\xi(\alpha^{i-1}(x))\otimes a_i^{(i-1)} 
\end{align*}
simply  because of the fact that $\langle v_1,v_2\rangle_{\mathscr V_y} v_3=\langle v_1,v_3\rangle_{\mathscr V_y} v_2$ for any $y\in X$ and $v_1, v_2, v_3\in \mathscr V_y$ which can be seen by considering their inverse images under a chart map $h_{\mathbf U}$ for $\mathbf U\in \mathcal U^{(1)}$ with $y\in \mathbf U$. 
\end{proof}

\begin{lemma}\label{lemma:covariant rep}
\begin{enumerate}[label=(\alph*)]
\item The pair $(\pi_k, \tau_k)$ is a covariant representation of the $C(X)$-correspondence $\mathcal{E}_Y$, hence there exists a $^*$-homomorphism 
\[	\mathcal{O}(\mathcal{E}_Y) \to \Gamma(\mathscr{M}_k)
\]
which we also denote by $\pi_k$. 
\item $\Gamma(\mathscr{M}_k)$ admits a gauge action $\beta:  \mathbb{T} \times \Gamma(\mathscr{M}_k) \to \Gamma(\mathscr{M}_k), (z, \xi) \mapsto \beta_z(\xi)$.  
\end{enumerate}
\end{lemma}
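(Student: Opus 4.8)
The plan is to verify the two assertions in turn, relying entirely on the computational groundwork already laid in Lemma~\ref{lemma:pi_and_tau} and on the gauge-invariant uniqueness theorem (Theorem~\ref{thm:gauge}) only for the later embedding results, not here. For part (a), I would first recall that a covariant representation of the $\mathrm C^*$-correspondence $\E_Y$ over $C(X)$ into $\Gamma(\mathscr M_k)$ consists of a $^*$-homomorphism $\pi_k$ and a linear map $\tau_k$ satisfying conditions (1)--(3) of Definition~\ref{def:cov rep}. Conditions (1) and (2) are exactly items (c) and (d) of Lemma~\ref{lemma:pi_and_tau}: (c) gives $\pi_k(\langle \xi,\eta\rangle_\E) = \tau_k(\xi)^*\tau_k(\eta)$, and (d) gives $\pi_k(f)\tau_k(\xi) = \tau_k(f\xi) = \tau_k(\varphi_{\E_Y}(f)\xi)$, since the structure map of the Hilbert bimodule $\E_Y$ is left multiplication. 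The only remaining point is the covariance condition (3): $\pi_k(f) = \psi_{\tau_k}(\varphi_{\E_Y}(f))$ for all $f \in J_{\E_Y} = \varphi_{\E_Y}^{-1}(\mathcal K(\E_Y)) \cap (\ker\varphi_{\E_Y})^\perp$. Here I would identify $J_{\E_Y}$ explicitly: since $\varphi_{\E_Y}(f)$ acts by multiplication by $f\circ\alpha$ and $\E_Y = C_0(X\setminus Y)\E$, the left action vanishes precisely on functions supported on $Y$, so $(\ker\varphi_{\E_Y})^\perp = \overline{C_0(X\setminus Y)}$ corresponds to $\overline{X\setminus Y} = X$ minus the interior of $Y$; in any case $J_{\E_Y} \subseteq C_0(X\setminus \Int(Y))\cap(\text{finite-rank condition})$. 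For $f \in J_{\E_Y}$, $\varphi_{\E_Y}(f) \in \mathcal K(\E_Y)$ can be written as a finite sum of rank-one operators $\sum \theta_{\xi_i,\eta_i}$, and then $\psi_{\tau_k}(\varphi_{\E_Y}(f)) = \sum \tau_k(\xi_i)\tau_k(\eta_i)^* = \sum \pi_k({}_{\E_Y}\langle\xi_i,\eta_i\rangle) = \pi_k(f)$ using item (e) of Lemma~\ref{lemma:pi_and_tau} and the fact that $\sum {}_{\E_Y}\langle\xi_i,\eta_i\rangle = f$ when $\sum\theta_{\xi_i,\eta_i} = \varphi_{\E_Y}(f)$ (this is the standard identity relating the left inner product to the compact operators a Hilbert bimodule affords). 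By the universal property of $\mathcal O(\E_Y)$, the covariant representation integrates to the desired $^*$-homomorphism $\pi_k : \mathcal O(\E_Y)\to\Gamma(\mathscr M_k)$.

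For part (b), I would construct the gauge action concretely. Recall from the local trivializations in Section~\ref{sec:loctriv} that a section $\varsigma \in \Gamma(\mathscr M_k)$ decomposes at each $x \in \overline{Y_k}$ into its components relative to the $\mathbb Z/(\text{subdiagonal})$-grading: write $\varsigma(x) = \sum_{m=-(r_k-1)}^{r_k-1}\varsigma_m(x)$ where $\varsigma_m(x) \in \mathscr M^{(r_k)}_x$ is the $m$-th subdiagonal part (for $m\geq 0$; superdiagonal for $m<0$), which by Remark~\ref{remark:matrix form} is a well-defined, coordinate-independent notion. I would then define
\[
\beta_z(\varsigma)(x) := \sum_{m} z^{m}\,\varsigma_m(x), \qquad z\in\mathbb T,\ x\in\overline{Y_k}.
\]
One checks $\beta_z$ is well defined (independent of the chart $\mathbf U$, using that conjugation by the diagonal unitary $u_{\mathbf{VU}}$ preserves each subdiagonal by Remark~\ref{remark:matrix form}(2)), that it is a $^*$-automorphism of $\Gamma(\mathscr M_k)$ (multiplicativity: the product of an $m$-subdiagonal and an $m'$-subdiagonal matrix is $(m+m')$-subdiagonal, so $\beta_z$ respects multiplication; $^*$-compatibility: the adjoint of an $m$-subdiagonal matrix is $(-m)$-subdiagonal), and that $z\mapsto\beta_z$ is a strongly continuous $\mathbb T$-action. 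Finally, to see that $\beta$ is a gauge action \emph{for the representation} $(\pi_k,\tau_k)$ in the sense of \cite[Definition 5.6]{Katsura2004}, I would verify $\beta_z(\pi_k(f)) = \pi_k(f)$ and $\beta_z(\tau_k(\xi)) = z\,\tau_k(\xi)$: from \eqref{pi_k matrix}, $\pi_k(f)(x)$ is diagonal, hence purely the $m=0$ component and fixed by $\beta_z$; from \eqref{tau_k matrix}, $\tau_k(\xi)(x)$ is purely first-subdiagonal ($m=1$), hence scaled by $z$.

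The main obstacle I anticipate is not any single computation but the bookkeeping around the covariance condition: correctly identifying the ideal $J_{\E_Y}$ and verifying the rank-one decomposition identity $\psi_{\tau_k}(\theta_{\xi,\eta}) = \pi_k({}_{\E_Y}\langle\xi,\eta\rangle)$ globally over $\overline{Y_k}$ rather than just pointwise. This is where the Hilbert-\emph{bimodule} structure (as opposed to mere correspondence) does the work, since the compatibility identity $\xi_1\langle\xi_2,\xi_3\rangle_\E = {}_\E\langle\xi_1,\xi_2\rangle\xi_3$ is precisely what makes $\theta_{\xi,\eta}$ implement left multiplication by ${}_{\E_Y}\langle\xi,\eta\rangle$; once this is pinned down, part (a) follows and part (b) is a matter of checking that the subdiagonal grading on $\Gamma(\mathscr M_k)$ is exactly the one under which $\pi_k$ is degree zero and $\tau_k$ is degree one. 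A secondary, purely technical point is confirming strong continuity of $z\mapsto\beta_z$, which follows since on each chart $\beta_z$ acts by the continuous (in fact polynomial) reweighting of matrix entries $a_{ij}\mapsto z^{i-j}a_{ij}$, uniformly in $x$.
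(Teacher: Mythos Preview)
Your approach is correct and parallels the paper's proof closely. For part (a), both you and the paper reduce covariance to Lemma~\ref{lemma:pi_and_tau}(e); the paper is more direct in identifying $J_{\E_Y} = C_0(X\setminus Y)$ (for a Hilbert bimodule one always has $J_\E = \overline{{}_\E\langle\E,\E\rangle}$, and here the left inner product ranges over exactly $C_0(X\setminus Y)$), and it sidesteps your ``finite sum of rank-one operators'' phrasing---which is not literally true for arbitrary compacts---by simply noting $\theta_{\xi,\eta} = \varphi_{\E_Y}({}_\E\langle\xi,\eta\rangle)$ and extending by linearity and continuity. For part (b), your subdiagonal-grading definition $\beta_z(\varsigma)=\sum_m z^m\varsigma_m$ is the same automorphism the paper constructs, but the paper realizes it concretely as $\beta_z = \mathrm{Ad}(U_z)$ for the fibrewise unitary $U_z(a_0,\ldots,a_{r_k-1}) = (a_0, za_1, \ldots, z^{r_k-1}a_{r_k-1})$ on $\mathscr D^{(r_k)}_x$; since $U_z$ is defined intrinsically on the fibre (not via charts), this makes the $^*$-automorphism property, chart-independence, and strong continuity immediate, so the separate verifications you outline become unnecessary.
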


\begin{proof}
(a) Lemma~\ref{lemma:pi_and_tau} (a)-(d) imply that $(\pi_k, \tau_k)$ is a representation of $\E_Y$ on $\Gamma(\mathscr M_k)$. It remains to show that the representation is covariant. Let $\varphi_{\mathcal{E}_Y}$ be the left action of $C(X)$ on $\mathcal{E}_Y$ given by restriction of the structure map $\varphi_\E : C(X)\to \mathcal{L}(\mathcal{E})$.  Let 
\[
J_{\mathcal{E}_Y} := \varphi_{\mathcal{E}_Y}^{-1} (\mathcal{K}(\mathcal{E}_Y)) \cap (\ker \varphi_{\mathcal{E}_Y})^\perp.
\]
We have \[
\theta_{\xi,\eta} (\zeta) = \xi \langle \eta, \zeta \rangle_{\mathcal{E}}  = {}_{\mathcal{E}}\langle \xi, \eta \rangle \zeta = \varphi_{\mathcal{E}_Y} (_{\mathcal{E}}\langle \xi, \eta \rangle )\zeta.
\]
for any $\xi, \eta, \zeta \in \E_Y$, from which it follows that
\[
\theta_{\xi,\eta} = \varphi_{\mathcal{E}_Y} ( _{\mathcal{E}}\langle \xi, \eta \rangle),\qquad \xi,\eta\in \mathcal{E}_Y.
\]
Hence $(\pi_k,\tau_k)$ is a covariant representation since Lemma \ref{lemma:pi_and_tau}(e) gives
\[	\pi_k (f) = \psi_{\tau_k}(\varphi_{\mathcal{E}_Y}(f)),\qquad f \in C_0(X \setminus Y),
\]
where $ \psi_{\tau_k} : \mathcal{K}(\mathcal{E}_Y) \to \Gamma(\mathscr{M}_k)$ is the $^*$-homomorphism given by $ \psi_{\tau_k}(\theta_{\xi,\eta})= \tau_k(\xi) \tau_k(\eta)^*$. It follows that $J_{\E_Y} = C_0 (X\setminus Y)$.

Now the existence of the $^*$-homomorphism $\pi_k : \mathcal{O}(\mathcal{E}_Y) \to \Gamma(\mathscr{M}_k)$  follows from the universal property of $\mathcal{O}(\E_Y)$.

(b) For $z \in \mathbb{T}$, let $\beta_z: \Gamma(\mathscr{M}_k) \to \Gamma(\mathscr{M}_k)$ be the $*$-homomorphism  given by $\beta_z=Ad(U_z)$ on each fibre $\mathscr M^{k}_x$ at $x$ for a unitary map $U_z$ which is an endomorphism on each fibre $\mathscr D^{(r_k)}_x$ at $x$ by
\[
U_z : (a_0, a_1, a_2, \ldots, a_{r_k-1})  \mapsto ( a_0,  za_1, z^2 a_2,\ldots, z^{r_k-1} a_{r_k-1}).
\]
We see that for $f\in C(X)$, 
\begin{align*}	
& \ U_z\pi_k(f)(x) U_z^* (a_0, a_1, \, \ldots,\, a_{r_k-1}) \\  
= &\  U_z\pi_k(f)(x)( a_0,  \overline{z}a_1, \overline{z}^2 a_2,\, \ldots,\, \overline{z}^{r_k-1} a_{r_k-1})\\
=&\  U_z(f(x) a_0,  f(\alpha(x))\overline{z}a_1, f(\alpha^2(x))\overline{z}^2 a_2, \, \ldots,\, f(\alpha^{r_k-1}(x)) \overline{z}^{r_k-1}a_{r_k-1})\\
=& \ (f(x) a_0,  zf(\alpha(x))\overline{z}a_1, z^2f(\alpha^2(x))\overline{z}^2 a_2, \, \ldots,\, z^{r_k-1}f(\alpha^{r_k-1}(x)) \overline{z}^{r_k-1}a_{r_k-1})\\
=&\ (f(x) a_0,  f(\alpha(x))a_1, f(\alpha^2(x)) a_2, \, \ldots,\, f(\alpha^{r_k-1}(x)) a_{r_k-1})\\
=& \ \pi_k(f)(x)
\end{align*}
and for $\xi \in \mathcal{E}_Y$,
\begin{align*}	
&\ U_z \tau_k(\xi)(x) U_z^* (a_0, a_1, \, \ldots,\, a_{r_k-1})   \\ 
= &\  U_z \tau_k(\xi)(x)( a_0,  \overline{z}a_1, \overline{z}^2 a_2,\, \ldots,\, \overline{z}^{r_k-1} a_{r_k-1})\\
=& \ U_z( 0, \xi(x) \otimes a_0, \xi(\alpha(x)) \otimes \overline{z}a_1, \, \ldots,\, \xi(\alpha^{r_k-2}(x)) \otimes \overline{z}^{r_k-2}a_{r_k-2})\\
=& \ ( 0, z\xi(x) \otimes a_0, z^2\xi(\alpha(x)) \otimes \overline{z}a_1,\, \ldots,\, z^{r_k-1}\xi(\alpha^{r_k-2}(x)) \otimes \overline{z}^{r_k-2}a_{r_k-2})\\
=&\ ( 0, z\xi(x) \otimes a_0, \, \ldots,\, z\xi(\alpha^{r_k-2}(x)) \otimes a_{r_k-2})\\
=& \ z \tau_k(\xi)(x).
\end{align*}
Thus the representation $(\pi_k, \tau_k)$ admits a gauge action.
\end{proof}

\begin{proposition}\label{prop:gauge action} Let $\pi_k:\mathcal O(\E_Y)\to \Gamma(\mathscr M_k)$ be the $^*$-homomorphism obtained in Lemma~\ref{lemma:covariant rep} for $1\leq k\leq K$. Then 
\[\pi:=\oplus_{k=1}^K\pi_k: \mathcal O(\E_Y)\to \Gamma(\mathscr M_1)\oplus\cdots\oplus \Gamma(\mathscr M_K)\]
is an injective $^*$-homomorphism.
\end{proposition}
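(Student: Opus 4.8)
The plan is to apply the gauge-invariant uniqueness theorem, Theorem~\ref{thm:gauge}, to the direct-sum representation $(\pi,\tau):=\bigl(\oplus_{k=1}^K\pi_k,\ \oplus_{k=1}^K\tau_k\bigr)$ of the $C(X)$-correspondence $\E_Y$ on the $\mathrm C^*$-algebra $B:=\Gamma(\mathscr{M}_1)\oplus\cdots\oplus\Gamma(\mathscr{M}_K)$. First I would note that $(\pi,\tau)$ is a covariant representation of $\E_Y$: conditions (1)--(3) of Definition~\ref{def:cov rep} are verified one coordinate at a time, and each $(\pi_k,\tau_k)$ satisfies them by Lemma~\ref{lemma:covariant rep}(a). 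Since $\mathrm C^*(\pi,\tau)$ is generated by $\pi(C(X))$ and $\tau(\E_Y)$, the integrated $^*$-homomorphism attached to $(\pi,\tau)$ by the universal property of $\mathcal O(\E_Y)$ is exactly $\pi=\oplus_{k=1}^K\pi_k$; so proving that this integrated homomorphism is injective is the same as proving the proposition.

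Second, $(\pi,\tau)$ admits a gauge action: putting $\beta_z:=\oplus_{k=1}^K\beta_z^{(k)}$ for $z\in\mathbb{T}$, where $\beta_z^{(k)}$ is the gauge action on $\Gamma(\mathscr{M}_k)$ constructed in Lemma~\ref{lemma:covariant rep}(b), one checks coordinatewise that $\beta_z(\pi(f))=\pi(f)$ and $\beta_z(\tau(\xi))=z\tau(\xi)$ for all $f\in C(X)$ and all $\xi\in\E_Y$.

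The only step that requires an honest argument is injectivity of $\pi$ restricted to $C(X)$. If $\pi(f)=0$ then, for every $k$ and every $x\in\overline{Y_k}$, equation~\eqref{pi_k matrix} gives
\[
0=\pi_k(f)(x)=\diag\bigl(f(x),f(\alpha(x)),\dots,f(\alpha^{r_k-1}(x))\bigr),
\]
so $f$ vanishes on $\alpha^i(\overline{Y_k})$ for every $0\le i\le r_k-1$ and every $k$. By the decomposition~\eqref{X:union},
\[
X=\bigsqcup_{k=1}^K\bigsqcup_{i=0}^{r_k-1}\alpha^i(Y_k)\ \subseteq\ \bigcup_{k=1}^K\bigcup_{i=0}^{r_k-1}\alpha^i(\overline{Y_k}),
\]
whence $f=0$. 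Thus $\pi|_{C(X)}$ is injective, and Theorem~\ref{thm:gauge} shows that the surjection $\mathcal O(\E_Y)\to\mathrm C^*(\pi,\tau)$, which is $\pi$, is an isomorphism onto its image; in particular $\pi$ is injective.

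The main obstacle is essentially bookkeeping: one must check that forming a direct sum of covariant representations preserves covariance and the existence of a gauge action (both routine, since these conditions are verified in each summand), and one must use the partition of $X$ into Rokhlin towers over the sets $Y_k$ to see that no nonzero element of $C(X)$ lies in the kernel. There is no deeper difficulty: once $\pi|_{C(X)}$ is injective and the gauge action is in place, injectivity of $\pi$ on all of $\mathcal O(\E_Y)$ is immediate from the gauge-invariant uniqueness theorem.
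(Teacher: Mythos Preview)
Your proof is correct and follows essentially the same approach as the paper: both arguments take the direct sum of the gauge actions from Lemma~\ref{lemma:covariant rep}(b), reduce injectivity to injectivity on $C(X)$ via the gauge-invariant uniqueness theorem (Theorem~\ref{thm:gauge}), and then use the diagonal form~\eqref{pi_k matrix} together with the tower decomposition~\eqref{X:union} to conclude $f=0$. Your write-up is slightly more explicit about why the direct sum of covariant representations is again covariant and about identifying the integrated map with $\oplus_k\pi_k$, but the substance is identical.
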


\begin{proof} 
The gauge actions $z\mapsto \beta_{k,z}$ on $\pi_k(\mathcal O(\E_Y))$, $k=1, \dots, K$, obtained in Lemma~\ref{lemma:covariant rep}, define a gauge action on $\pi(\mathcal O(\E_Y))$ 
\[z\mapsto \beta_z:=\beta_{1,z}\oplus\cdots\oplus \beta_{K,z}.\] 
Thus to show that $\pi$ is injective, it is enough to show injectivity on $C(X)$ \cite[Theorem 6.4]{Katsura2004}. 

Let $f\in C(X)$ be a function such that 
$\pi(f)=\pi_1(f)\oplus\cdots\oplus \pi_K(f)=0$. 
Then 
\[\pi_k(f)(x)= \diag(f(x), f(\alpha(x)), \dots, f(\alpha^{r_k-1}(x))=(0, \dots, 0)\] for every $x\in \overline{Y_k}$ and $1\leq k\leq K$. 
Sine $X=\cup_{k=1}^K \cup_{j=1}^{r_k-1} \alpha^j(\overline{Y_k})$ by (\ref{X:union}), we have $f=0$ as desired.
\end{proof}

\begin{remark}\label{remark:identification}
If $a=\xi_1\xi_2\cdots \xi_k \xi_{k+1}^*\cdots \xi_{k+l}^*\in \mathcal O(\E_Y)$ for $\xi_i\in \E_Y$, and $k+l$ is the minimal length for any possible such expressions of $a$, then we say that $\deg(a)=k-l$. Then 
the span of the elements of degree $n$ for $n\in \mathbb Z$ is dense in $\mathcal O(\E_Y)$, namely 
\[
\mathcal{O}(\mathcal{E}_Y)= \overline{\oplus_{n\in \mathbb{Z}} \overline{E_n}},
\]
where $E_n$ denotes the  $C(X)$-linear span of the elements of degree $n$. Note that 
$E_0 = C(X)$, and for  $n\geq 1$ 
\[ \xi_1\otimes\cdots\otimes \xi_n\mapsto \xi_1\cdots\xi_n\] extends to an isometric  linear map of  $\E_Y^{\otimes n}$ onto  $E_n$ (see Remark 4.1 of \cite{AAFGJSV2024}). Thus  we may identify $\E_Y^{\otimes n}$ with the subspace $E_n$ of $\mathcal O(\E_Y)$. 

Let $\xi=\xi_1\xi_2\in E_2$ for $\xi_1, \xi_2\in \E_Y$. Then for $(a_0, a_1, \ldots, a_{r_k-1}) \in \mathscr{V}^{(0)}_x \oplus \mathscr{V}^{(1)}_x \oplus \mathscr{V}^{(2)}_x \oplus \cdots \oplus \mathscr{V}^{(r_k-1)}_x$ and $x\in \overline{Y_k}$, since $\pi_k$ is a homomorphism, we have 
\begin{align*}
\lefteqn{\pi_k(\xi_1\xi_2)(x)(a_0, a_1, \ldots, a_{r_k-1})} \\
&=   \pi_k(\xi_1)(x)\pi_k(\xi_2)(x)(a_0, a_1, \ldots, a_{r_k-1})  \\
&= \pi_k(\xi_1)(x)(0,\xi_2(x)\otimes a_0, \dots, \xi_2(\alpha^{r_k-2}(x))\otimes a_{r_k-2})\\
&= (0, 0, \xi_1(\alpha(x))\otimes\xi_2(x)\otimes a_0, \dots, \xi_1(\alpha^{r_k-2}(x))\otimes \xi_2(\alpha^{r_k-3}(x))\otimes a_{r_k-3})\\
&= (0,0, \psi(\xi_1\otimes\xi_2)(x)\otimes a_0, \dots, \psi(\xi_1\otimes\xi_2)(\alpha^{r_k-3}(x))\otimes a_{r_k-3}).
\end{align*}
From a similar calculation and Proposition~\ref{tensortosection}, we see that for $\xi \in \mathcal{E}_Y^{\otimes n}=E_n$, $\pi_k(\xi)(x)$ acts on $\,\mathscr D_x^{(r_k)}$ as follows:
\begin{align*}
\lefteqn{\pi_k(\xi)(x) (a_0, a_1, \ldots, a_{r_k-1}) }  \\
&= ( 0,\ldots,0, \psi(\xi)(x)\otimes a_0, \psi(\xi)(\alpha(x))\otimes a_1,   \ldots, \psi(\xi)(\alpha^{r_k-1-n}(x)) \otimes a_{r_k-1-n}),
\end{align*}
with zeros in the first $n$ coordinates, 
or on each component $\mathscr V_x^{(i)}$, 
\begin{equation}\label{pi_k(xi)}
 \pi_k(\xi)(x)|_{\mathscr V_x^{(i)}}(a_i)=\psi(\xi)(\alpha^i(x))\otimes a_i\in \mathscr V_{\alpha^i(x)}^{(n)}\otimes \mathscr V_x^{(i)}=\mathscr V_x^{(n+i)}.  
\end{equation}
Thus for  $k=1, \dots, K$,
\[\pi_k(E_n)=0 \ \text{ whenever } n\geq r_k.\]

\end{remark}

\begin{definition}
Let $\varsigma=(\varsigma_1,\ldots,\varsigma_K)\in \Gamma(\mathscr{M}_1) \oplus \cdots \oplus \Gamma(\mathscr{M}_K)$. 
For an integer $0\leq m\leq r_K-1$, we say that  $\varsigma_k\in \Gamma(\mathscr M_k)$ {\it has an  $m$\textsuperscript{th} subdiagonal matrix representation} if at every $x\in \overline{Y_k}$, the matrix $(H_{\mathbf U}^{(k)})^{-1}(\varsigma_k(x))$ is $m$\textsuperscript{th} subdiagonal for some (hence for all) $\mathbf U\in \mathcal U^{(r_K)}$ with $x\in \mathbf U$. Here  $(H_{\mathbf U}^{(k)})^{-1}( \varsigma_k(x))$ is regarded as the  zero matrix if $m\geq r_k$.
\end{definition}

Assume that each $\varsigma_k\in\Gamma(\mathscr M_k)$ has an $m$\textsuperscript{th} lower subdiagonal  matrix representation. 
By \eqref{m-diagonal component},  for $x \in \overline{Y_k}$, the map
\[
\varsigma_k(x) : \mathscr{D}^{(r_k)}_{x} \to \mathscr{D}^{(r_k)}_{x}
\] is $m$\textsuperscript{th} lower subdiagonal in matrix representation if and only if $\varsigma_k(x)$ maps $\mathscr{V}_x^{(i)}$ in $\mathscr{D}^{(r_k)}_{x}= \mathscr{V}_x^{(0)} \oplus \mathscr{V}_x^{(1)} \oplus \cdots \oplus \mathscr{V}_x^{(r_k-1)}$ to $\mathscr{V}_x^{(m+i)}$ for $i<r_{k}-m$ and $\{0\}$ for $r_{k}-m \leq i<r_{k} $. 
Furthermore,
the linear map 
$$\varsigma_k(x)|_{\mathscr{V}^{(i)}_x}: \mathscr{V}^{(i)}_x \to \mathscr{V}^{(m+i)}_x =\mathscr{V}_{\alpha^{i}(x)}^{(m)} \otimes \mathscr{V}^{(i)}_x$$ 
is determined by a vector $\varsigma_k^{(i)}(x) \in \mathscr{V}_{\alpha^{i}(x)}^{(m)}$ such that 
\begin{equation} \label{restriction on i}
\varsigma_k(x)|_{\mathscr{V}^{(i)}_x}: v \mapsto \varsigma_k^{(i)}(x)\otimes v,\ \text{for any $v\in \mathscr{V}^{(i)}_x$}.
\end{equation}  

From the definition of $\pi_k$, we know that if $g\in C(X)$, then  $\pi_k(g)(x)$ is represented as a diagonal matrix for all $x\in \overline{Y_k}$. Also if $\xi \in \mathcal{E}_Y^{\otimes m}\subset \mathcal O(\E_Y)$ for some $1\leq m\leq r_k-1$, then   $\pi_k(\xi)(x)$ is represented as an  $m$\textsuperscript{th} lower subdiagonal matrix since it maps $\mathscr V_x^{(i)}$ into $\mathscr V_x^{(i+m)}$ by (\ref{pi_k(xi)}). Also  if  $\varsigma=(\varsigma_1,\ldots,\varsigma_K)=\pi(\xi)$ for $\xi\in \E_Y^{\otimes m}$), then we have  $\varsigma_k^{(i)}(x)=\psi(\xi)(\alpha^i(x))$ by (\ref{s_k^i}) and (\ref{restriction on i}).

\begin{lemma}\label{lemma:surjective pi_k} 
If $Y_k$ is closed, then  $\pi_k: \mathcal O(\E_Y)\to \Gamma(\mathscr M_k)$ is  surjective. In particular, $\pi_1$ is surjective.
\end{lemma}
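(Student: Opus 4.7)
My plan is to exploit the gauge-action grading on $\Gamma(\mathscr{M}_k)$ to reduce surjectivity to hitting each $n$-th subdiagonal part, and to build the required preimages in $\mathcal{O}(\E_Y)$ by prescribing sections of $\mathscr{V}^{(n)}$ on closed Rokhlin-tower pieces and extending via Tietze. The closedness of $Y_k$ feeds in through two disjointness facts, both immediate consequences of the first-return property: the sets $Y_k,\alpha(Y_k),\ldots,\alpha^{r_k-1}(Y_k)$ are pairwise disjoint closed subsets of $X$ (a coincidence $\alpha^i(y)=\alpha^j(y')$ with $0\leq i<j\leq r_k-1$ would put $\alpha^{j-i}(y')\in Y$ with $0<j-i<r_k$, contradicting $r_Y(y')=r_k$); and, for $1\leq n\leq r_k-1$, the closed set $\bigsqcup_{i=0}^{r_k-1-n}\alpha^i(Y_k)$ is disjoint from $\bigcup_{j=1}^{n}\alpha^{-j}(Y)$ (any common point forces $\alpha^{i+j}(y)\in Y$ with $1\leq i+j\leq r_k-1$, again contradicting first return).

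The gauge action on $\Gamma(\mathscr{M}_k)$ from Lemma~\ref{lemma:covariant rep}(b), given fibrewise by conjugation by $\diag(1,z,z^2,\ldots,z^{r_k-1})$, yields an algebraic direct sum decomposition
\[
\Gamma(\mathscr{M}_k)=\bigoplus_{n=-(r_k-1)}^{r_k-1}\Gamma(\mathscr{M}_k)_n,
\]
where $\Gamma(\mathscr{M}_k)_n$ consists of those sections whose matrix representation in any (equivalently, some) chart of the induced atlas is $n$-th subdiagonal; this is well-defined by Remark~\ref{remark:matrix form}(2). Since $\pi_k$ intertwines the gauge actions, $\pi_k(E_n)\subseteq \Gamma(\mathscr{M}_k)_n$, so it suffices to prove surjectivity onto each graded summand.

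For $n=0$, a diagonal section $\varsigma\in\Gamma(\mathscr{M}_k)_0$ is determined by its chart-invariant diagonal entries, which are continuous functions $f_0,\ldots,f_{r_k-1}\in C(Y_k)$ by Remark~\ref{remark:matrix form}(1). The rule $f(\alpha^j(y)):=f_j(y)$ defines a continuous function on the closed disjoint union $\bigsqcup_{j=0}^{r_k-1}\alpha^j(Y_k)$; extending to $f\in C(X)$ by Tietze and invoking \eqref{pi_k matrix} gives $\pi_k(f)=\varsigma$. For $1\leq n\leq r_k-1$, an $n$-th lower subdiagonal $\varsigma$ is encoded by vectors $\varsigma^{(i)}(y)\in \mathscr{V}_{\alpha^i(y)}^{(n)}$ depending continuously on $y\in Y_k$, for $0\leq i\leq r_k-1-n$ (Remark~\ref{remark:matrix form}(3) together with~\eqref{s_k^i}). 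On the closed set
\[
C\;:=\;\Bigl(\bigsqcup_{i=0}^{r_k-1-n}\alpha^i(Y_k)\Bigr)\sqcup\Bigl(\bigcup_{j=1}^{n}\alpha^{-j}(Y)\Bigr),
\]
define a continuous section $s_0$ of $\mathscr{V}^{(n)}$ by $s_0(\alpha^i(y)):=\varsigma^{(i)}(y)$ on the first piece and $s_0\equiv 0$ on the second; well-definedness and continuity follow from the disjointness recorded above. Extend $s_0$ to a global section $s\in\Gamma(\mathscr{V}^{(n)})$ using local triviality of $\mathscr{V}^{(n)}$, the scalar Tietze theorem, and a partition-of-unity patching. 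Then $\xi:=\psi^{-1}(s)\in\E^{\otimes n}$, Proposition~\ref{EandSections} forces $\xi\in \E_Y^{\otimes n}$, and \eqref{pi_k(xi)} yields $\pi_k(\xi)=\varsigma$. The cases $-(r_k-1)\leq n\leq -1$ are obtained by taking adjoints of the lower-subdiagonal case, since $\pi_k$ is a $^*$-homomorphism.

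The final clause about $\pi_1$ is then automatic: by minimality of $r_1$ one has $Y_1=Y\cap\alpha^{-r_1}(Y)$, which is closed as an intersection of closed sets. The delicate point I anticipate is precisely the construction of $s_0$ and its extension to $s$: the two families of closed pieces of $C$ must be seen to fit together continuously (which is exactly what the disjointness assertions of the first paragraph are for), and the bundle-theoretic Tietze extension must be spelled out via local trivializations of $\mathscr{V}^{(n)}$ and a partition of unity. Everything else is formal consequence of the grading and of the identifications $E_n\cong\E_Y^{\otimes n}$ and $\psi:\E^{\otimes n}\cong\Gamma(\mathscr{V}^{(n)},\alpha^n)$.
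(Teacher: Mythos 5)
Your proposal is correct and follows essentially the same route as the paper: reduce to sections with a fixed subdiagonal matrix representation (the paper does this by noting such sections generate $\Gamma(\mathscr M_k)$, you via the finite gauge-grading, which amounts to the same reduction), then prescribe the corresponding section of $\mathscr V^{(m)}$ on the pairwise disjoint closed tower pieces $\alpha^i(Y_k)$, extend by the Tietze theorem for vector bundles so that it vanishes on $\bigcup_{j=1}^{m}\alpha^{-j}(Y)$, and pull back through $\psi^{-1}$ using Proposition~\ref{EandSections}. The disjointness facts you isolate are exactly the ones the paper verifies, so no gap remains.
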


\begin{proof} 
If $k=1$ and $r_1=1$, then $\mathscr D^{(r_1)}=\mathscr V^{(0)}=X\times \mathbb C$ is a trivial line bundle, hence $\mathscr M_1=\mathscr M^{(r_1)}|_{Y_1}=Y_1\times M_1(\mathbb C)=Y_1\times\mathbb C$. Thus $\Gamma(\mathscr M_1)=C(Y_1)$ and  $\Gamma(\mathscr M_1)=\pi_1(C(X))$ by definition of $\pi_1$. 

We assume $r_k>1$. Since the $C^*$-algebra $\Gamma(\mathscr M_k)$ is generated by the sections 
$\varsigma\in \Gamma(\mathscr M_k)$  which is an $m$\textsuperscript{th} lower subdiagonal matrix in its matrix representation for $0\leq m<r_k$, it is enough to show that  $\varsigma_k\in \pi_k(\mathcal O(\E_Y))$ for any   such section $\varsigma_k$. 
 Note that if $0\leq i\neq j <r_k$, then  
$\alpha^i(Y_k)\cap \alpha^j(Y_k)=\emptyset$ by definition of $r_k$.

Let $\varsigma_k\in \Gamma(\mathscr M_k)$ have $m$\textsuperscript{th} lower subdiagonal matrix representation. From the discussion given before this lemma, for $i=0, \dots, r_k-m-1$, there exists  $\varsigma_k^{(i)}(x) \in \mathscr{V}_{\alpha^{i}(x)}^{(m)}$ such that the map
\[\varsigma_k(x)|_{\mathscr{V}^{(i)}_x}: \mathscr{V}^{(i)}_x \to \mathscr{V}^{(m+i)}_x =\mathscr{V}_{\alpha^{i}(x)}^{(m)} \otimes \mathscr{V}^{(i)}_x\] 
is given by  $v\mapsto \varsigma_k^{(i)}(x)\otimes v$ for $v\in \mathscr{V}^{(i)}_x$.
 
 Define  $\xi\in \Gamma(\mathscr V^{(m)}|_{\cup_{i=0}^{r_k-m-1}\alpha^i(Y_k)})$ on the closed subset $\cup_{i=0}^{r_k-m-1}\alpha^i(Y_k)$ of $X$ by 
\[\xi|_{\alpha^i(Y_k)}(\alpha^i(x)):=\varsigma_k^{(i)}(x)\]
for $x\in Y_k$ and $i=0,\dots, r_k-m-1$. This $\xi$ is well defined since the sets $\alpha^i(Y_k)$ are mutually disjoint. 
It is easy to see that  
\[(\cup_{l=1}^m \alpha^{-l}(Y)) \cap (\cup_{i=0}^{r_k-m-1}\alpha^i(Y_k))=\emptyset,\] 
and then  by Tietze extension theorem for vector bundles \cite[Lemma 1.4.1]{Ati:k-theory}, $\xi$ extends to  a section $\widetilde{\xi}\in \Gamma(\mathscr V^{(m)})$ over the whole space $X$ such that for all $0 \leq i \leq r_k-m-1$,
\[\widetilde{\xi}|_{\alpha^i(Y_k)}=\xi|_{\alpha^i(Y_k)} \ \ \text{and } \ \widetilde{\xi}|_{\cup_{l=1 }^m \alpha^{-l}(Y)}=0.\]
Recall from Proposition~\ref{EandSections} that there is an isomorphism $\psi:\E^{\otimes m}\to \Gamma(\mathscr V^{(m)},\alpha^m)$ such that  the property $\widetilde{\xi}|_{\cup_{l=1}^m \alpha^{-l}(Y)}=0$ implies  $\psi^{-1}(\widetilde \xi)\in \E_Y^{\otimes m}=E_m\subset \mathcal O(\E_Y)$ (see Remark~\ref{remark:identification}).  
Now for 
any $(a_0, a_1, \ldots, a_{r_k-1})\in \mathscr D^{(r_k)}_x=\mathscr V^{(0)}_x\oplus\cdots\oplus \mathscr V^{(r_k-1)}_x$, we then have 
\begin{align*}
&\  \pi_k(\psi^{-1}(\widetilde \xi))(x)(a_0, a_1, \ldots, a_{r_k-1}) \\
=&\ ( 0,\ldots,0, \psi(\psi^{-1}(\widetilde \xi))(x)\otimes a_0,  \ldots, \psi(\psi^{-1}(\widetilde \xi))(\alpha^{r_k-m-1}(x)) \otimes a_{r_k-m-1})\\
=&\ ( 0,\ldots,0, \widetilde \xi(x)\otimes a_0,  \widetilde \xi(\alpha(x))\otimes a_1, \ldots,  \widetilde \xi(\alpha^{r_k-m-1}(x)) \otimes a_{r_k-m-1})\\
=&\ ( 0,\ldots,0, \varsigma_k^{(0)}(x)\otimes a_0,  \varsigma_k^{(1)}(x)\otimes a_1, \ldots,  \varsigma_k^{(r_k-m-1)}(x) \otimes a_{r_k-m-1})\\
=&\ \varsigma_k(x)(a_0, a_1, \ldots, a_{r_k-1}),
\end{align*}
hence $\pi_k(\psi^{-1}(\widetilde \xi))=\varsigma_k$ follows. 
\end{proof}


\section{A recursive subhomogeneous decomposition \texorpdfstring{of $\mathcal O(\E_Y)$}{}}
\label{sec:rsh construction}

In this section we explicitly describe the image of $ \mathcal{O}(\E_Y)$ under the injective $^*$-homomorphism $\pi : \mathcal{O}(\E_Y) \to \Gamma(\mathscr M_1) \oplus \dots\oplus \Gamma(\mathscr M_K)$.  In particular, we construct a recursive subhomogeneous decomposition of $\pi(\mathcal{O}(\E_Y)) \cong \mathcal{O}(\E_Y)$. As in the case of $\mathrm C^*$-algebras of minimal homeomorphisms, we investigate the behaviour of boundary points. However, the presence of a twist (that is, a non-trivial line bundle) leads to further difficulties since we need to consider the local trivialization of the underlying endomorphism bundles when we are tracking the travel of boundary points through the Rokhlin towers.

Note that the sets $Y_k$ are in general not closed.
If $x \in \overline{Y_k}$, by continuity of $\alpha$ and the fact that $Y$ is closed, we have $\alpha^{r_k} (x) \in Y$. Whereas, if $x \in \overline{Y_k} \setminus Y_k$, then this will not be the first return time of $x$ so that $x\in Y_{t_1}$ for some $t_1 < k$, and hence $\alpha^{r_{t_1}} (x) \in   Y_{t_2}$ for some $t_2 < k$. It follows that $x \in \overline{Y_k} \cap Y_{t_1} \cap \alpha^{-{r_{t_1}}}(Y_{t_2})$.   If $r_{t_1} + r_{t_2} < r_k$, then there is some $t_3 < k$ such that $\alpha^{r_{t_1} + r_{t_2}}(x)\in Y_{t_3}$. Continuing in this manner, we get
\[
t_1, t_2, \ldots , t_{m(x,k)} < k
\]
such that
\[
r_{t_1} + r_{t_2} + \cdots + r_{t_{m(x,k)}} =   r_{ k}
\] 
and
\[
x \in \overline{Y_k} \cap Y_{t_1} \cap \alpha^{-{r_{t_1}}}(Y_{t_2}) \cap \cdots
\cap \alpha^{-(r_{t_1} + r_{t_2} + \cdots + r_{t_{m(x,k)-1}})}(Y_{t_{m(x,k)}})
\] 
Define $\mu_{x,k}$  to be the ordered multi-set
\[
\mu_{x,k} := \{t_1, t_2, \ldots , t_{m(x,k)} \},
\] 
and for $1 \leq s \leq m (x, k)$ let $ \mu_{x,k}(s)=t_s$ and $|\mu_{x,k}|= m (x, k)$.
For $2 \leq k \leq K$, and $x \in \overline{Y_k}$, let
\[
R_{x,k,0}:=0,\quad
R_{x,k,s}:=r_{t_1} + r_{t_2} + \cdots + r_{t_s}=\sum_{t=1}^s r_{\mu_{x,k}(t)},\ 1\leq s \leq |\mu_{x,k}|.
\]
Note that the vector space 
\begin{align*}
\mathscr{D}^{(r_k)}_x &= \mathscr{V}^{(0)}_x \oplus\cdots\oplus \mathscr{V}^{(r_{t_1}-1)}_x \oplus \mathscr{V}^{(r_{t_1})}_x \oplus \cdots \oplus \mathscr{V}^{(r_{t_1}+r_{t_2}-1)}_x\oplus\cdots\oplus \mathscr{V}^{(r_k-1)}_x\\
&=\mathscr{D}_x^{(r_{t_1})} \oplus \mathscr{D}_x^{(R_{x,k,1}, r_{t_2})} \oplus \mathscr{D}_x^{(R_{x,k,2}, r_{t_3})} \oplus \cdots \oplus \mathscr{D}_x^{(R_{x,k,m(x,k)-1},r_{t_{m(x,k)}})}
\end{align*}
is the direct sum of $\mathscr{D}^{(R_{x,k,s-1}, r_{t_s})}$, $s=1,2,\ldots,m(x,k)$, where $\mathscr D^{(0,r_{t_1})}:=\mathscr D^{(r_{t_1})}$. We call \[\mathscr{D}_x^{(R_{x,k,s-1}, r_{t_s})}=\mathscr V_x^{(R_{x,k,s-1})}\oplus\mathscr V_x^{(R_{x,k,s-1}+1)}\oplus\cdots\oplus \mathscr V_x^{(R_{x,k,s-1}+r_{t_s}-1)}\] the {\it $s$\textsuperscript{th} component } of $\mathscr D_x^{(r_k)}$ for $s=1, \dots, m(x,k)$, which  
can also be written as follows: 
\begin{align*}
\lefteqn{\mathscr{D}^{(R_{x,k,s-1}, r_{t_s})}_x} \\
& =\ \mathscr V_x^{(R_{x,k,s-1})}\oplus \mathscr V_x^{(R_{x,k,s-1}+1)}\oplus \cdots\oplus \mathscr V_x^{(R_{x,k,s}-1)}\\
& =\ \mathscr V_x^{(R_{x,k,s-1})}\oplus \mathscr V_x^{(R_{x,k,s-1}+1)}\oplus \cdots\oplus \mathscr V_x^{(R_{x,k,s-1}+r_{t_s}-1)}\\
& =\ \big(\mathscr V_{\alpha^{R_{x,k,s-1}}(x)}^{(0)}\otimes \mathscr V_x^{(R_{x,k,s-1})}\big)\oplus\cdots\oplus \big(\mathscr V_{\alpha^{R_{x,k,s-1}}(x)}^{(r_{t_s}-1)}\otimes \mathscr V_x^{(R_{x,k,s-1})}\big)\\
& =\ \big( \mathscr V_{\alpha^{R_{x,k,s-1}}(x)}^{(0)}\oplus\cdots\oplus \mathscr V_{\alpha^{R_{x,k,s-1}}(x)}^{(r_{t_s}-1)}\big)\otimes  \mathscr V_x^{(R_{x,k,s-1})}\\
& =\ \mathscr D_{\alpha^{R_{x,k,s-1}}(x)}^{(r_{t_s})}\otimes  \mathscr V_x^{(R_{x,k,s-1})}.
\end{align*} 

The next definition is based on \cite[Lemma 4]{QLin:Ay}.
\begin{definition}\label{bd property}
For each $\varsigma=(\varsigma_1,\ldots,\varsigma_K)\in \Gamma(\mathscr{M}_1) \oplus \cdots \oplus \Gamma(\mathscr{M}_K)$,  we say that $\varsigma$ has the \emph{boundary decomposition property} if it satisfies the following: for any $x\in \overline{Y_k}\setminus Y_k$, $k=1,\dots,K$ with 
\[
x \in  \overline{Y_k} \cap Y_{t_1} \cap \alpha^{-{r_{t_1}}}(Y_{t_2}) \cap \cdots
\cap \alpha^{-(r_{t_1} + r_{t_2} + \cdots + r_{t_{m(x,k)-1}})}(Y_{t_{m(x,k)}})
\] 
and $r_{t_1} + r_{t_2} + \cdots + r_{t_{m(x,k)}} =   r_{ k}$, 
the linear map $\varsigma_k(x)\in \mathscr M_k|_x=End(\mathscr D_x^{(r_k)})$ acts on the $s$\textsuperscript{th} component 
\[\mathscr{D}^{(R_{x,k,s-1}, r_{t_s})}_x=\ \mathscr D_{\alpha^{R_{x,k,s-1}}(x)}^{(r_{t_s})}\otimes  \mathscr V_x^{(R_{x,k,s-1})}\]
of $\mathscr{D}^{(r_{k})}_x$ as  
\begin{equation}\label{s-component}
\varsigma_k(x)|_{\mathscr{D}^{(R_{x,k,s-1}, r_{t_s})}_x} = \varsigma_{t_s}(\alpha^{R_{x,k,s-1}}(x)) \otimes id_{\mathscr{V}^{(R_{x,k,s-1})}_x},      
\end{equation}
or equivalently, $\varsigma_k(x)$ has its matrix representation of the following form 
\[
\widetilde{\varsigma}_k(x) = \begin{bmatrix}
\widetilde{\varsigma}_{t_1}(x) & \\
 & \widetilde{\varsigma}_{t_2}( \alpha^{R_{x,k,1}}(x)) & \\
&         & \ddots\\
&         &    & \widetilde{\varsigma}_{t_{ m(x,k)}}(\alpha^{R_{x,k,m(x,k)-1}}(x))   
\end{bmatrix}
\]
with respect to the local trivialization induced from $\{ (U,h_U)\mid U\in \mathcal U^{(1)}=\mathcal U\}$ of  $\mathscr{V}$.	
\end{definition}
Next we show that the image of $\pi$ can be determined by the boundary decomposition property.

\begin{lemma}\label{lemma:bdp_ran(pi)} If $\varsigma\in \pi(\mathcal O(\E_Y))$, then $\varsigma$   has the boundary decomposition property.
\end{lemma}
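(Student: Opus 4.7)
The boundary decomposition property is preserved under addition, multiplication, adjoint, and norm limits in $\Gamma(\mathscr{M}_1) \oplus \cdots \oplus \Gamma(\mathscr{M}_K)$: linearity is immediate from \eqref{s-component}; multiplicativity follows because a product of block-diagonal operators with compatible block structures is block diagonal, with blocks the products of the corresponding blocks (and the same holds with $\otimes \id_{\mathscr V^{(R_{x,k,s-1})}_x}$ factored out uniformly); stability under $*$ follows likewise; and closure under limits is clear. Since $\pi$ is a $*$-homomorphism and $\mathcal{O}(\E_Y) = \overline{\bigoplus_{n \in \mathbb{Z}} \overline{E_n}}$ with $E_0 = C(X)$ and $E_n \cong \E_Y^{\otimes n}$ for $n \geq 1$ (Remark~\ref{remark:identification}), it therefore suffices to verify the boundary decomposition property for $\pi(f)$, $f \in C(X)$, and for $\pi(\xi)$, $\xi \in \E_Y^{\otimes n}$, $n \geq 1$.

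For $\pi(f)$ with $f \in C(X)$, the matrix $\pi_k(f)(x)$ is the diagonal matrix with entries $f(\alpha^j(x))$, $j = 0, \dots, r_k-1$, by \eqref{pi_k matrix}. The entries of the $s$-th block of size $r_{t_s}$ are $f(\alpha^{R_{x,k,s-1}}(x)), \ldots, f(\alpha^{R_{x,k,s-1}+r_{t_s}-1}(x))$, which coincide exactly with the diagonal of $\pi_{t_s}(f)(\alpha^{R_{x,k,s-1}}(x))$. Since the identification $\mathscr{D}^{(R_{x,k,s-1}, r_{t_s})}_x = \mathscr D_{\alpha^{R_{x,k,s-1}}(x)}^{(r_{t_s})} \otimes \mathscr V^{(R_{x,k,s-1})}_x$ matches the canonical bases coming from $\mathcal U^{(r_K)}$, this is precisely \eqref{s-component}.

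For $\pi(\xi)$ with $\xi \in \E_Y^{\otimes n}$, identity \eqref{pi_k(xi)} gives $\pi_k(\xi)(x)|_{\mathscr V^{(i)}_x}(v) = \psi(\xi)(\alpha^i(x)) \otimes v \in \mathscr V^{(i+n)}_x$ for $0 \le i < r_k - n$, and $0$ otherwise. The crucial input is Proposition~\ref{EandSections}: $\psi(\xi)$ vanishes on $\alpha^{-1}(Y) \cup \cdots \cup \alpha^{-n}(Y)$. Fix $x \in \overline{Y_k}\setminus Y_k$ and an index $i$ in the $s$-th block, so $R_{x,k,s-1} \leq i < R_{x,k,s}$. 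If $i + n \geq R_{x,k,s}$, then the map would send $\mathscr V^{(i)}_x$ outside the $s$-th block; but writing $j := R_{x,k,s} - i$ we have $1 \leq j \leq n$ and $\alpha^{i+j}(x) = \alpha^{R_{x,k,s}}(x) \in Y_{t_{s+1}} \subset Y$, so $\psi(\xi)(\alpha^i(x)) = 0$ and the map is in fact zero. When $i + n < R_{x,k,s}$, both source and target lie in the $s$-th block, and under the identification $\mathscr V^{(i)}_x = \mathscr V^{(i - R_{x,k,s-1})}_{\alpha^{R_{x,k,s-1}}(x)} \otimes \mathscr V^{(R_{x,k,s-1})}_x$ the formula $v \mapsto \psi(\xi)(\alpha^i(x)) \otimes v$ coincides with $\bigl(\pi_{t_s}(\xi)(\alpha^{R_{x,k,s-1}}(x)) \otimes \id_{\mathscr V^{(R_{x,k,s-1})}_x}\bigr)(v)$, again by \eqref{pi_k(xi)} applied at $\alpha^{R_{x,k,s-1}}(x) \in \overline{Y_{t_s}}$. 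This yields \eqref{s-component}.

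The main technical point to be careful about is the bookkeeping: one must consistently use the tensor decompositions $\mathscr V^{(i)}_x = \mathscr V^{(i-R_{x,k,s-1})}_{\alpha^{R_{x,k,s-1}}(x)} \otimes \mathscr V^{(R_{x,k,s-1})}_x$ and check that these are compatible with the local trivializations of Section~\ref{sec:loctriv}, so that the fibrewise block decomposition corresponds to a genuine block-diagonal form in the matrix representations used in Definition~\ref{bd property}. Once this is set up, the argument reduces cleanly to the orbit-breaking vanishing property of Proposition~\ref{EandSections} together with the explicit description \eqref{pi_k(xi)}.
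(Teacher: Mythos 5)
Your proposal is correct and follows essentially the same route as the paper: reduce via closure of the boundary decomposition property under the algebraic operations and limits, then verify it fibrewise on generators using the explicit formulas for $\pi_k$, $\tau_k$ and the vanishing of $\psi(\xi)$ on $\alpha^{-1}(Y)\cup\cdots\cup\alpha^{-n}(Y)$ from Proposition~\ref{EandSections}. The only (harmless) difference is that you check all of $\E_Y^{\otimes n}$ directly, whereas the paper checks only $C(X)$ and $\E_Y$ and lets closure under multiplication do the rest.
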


\begin{proof} 
Since it is not hard to see that the boundary decomposition property is preserved under addition, multiplication, involution, and taking limits,  we only need  to show the assertion when $\varsigma=\pi(g)$ for $g\in C(X)$ or  $\varsigma=\pi(\xi)$ for  $\xi\in \E_Y$.

First let $\varsigma=(\varsigma_1,\ldots,\varsigma_K)=\pi(g)\in \Gamma(\mathscr{M}_1) \oplus \cdots \oplus \Gamma(\mathscr{M}_K)$  for  some $g\in C(X)$ and let $x\in \overline{Y_k}\setminus Y_k$ be a boundary point of $Y_k$ as in Definition~\ref{bd property}.  We then have 
\begin{align*}
	\varsigma_k(x) &=\pi_k(g)(x)= \diag(g(x), g(\alpha(x)), \ldots, g(\alpha^{r_k-1}(x)) ):\mathscr{D}^{(r_k)}_{x} \to \mathscr{D}^{(r_k)}_{x}
\end{align*}
and the restriction  $\varsigma_k(x)|_{\mathscr{V}_x^{(i)}} : \mathscr{V}_x^{(i)} \to \mathscr{V}_x^{(i)}$ is a multiplication by a number which we write as $\varsigma_k^{(i)}(x)$ as observed in (\ref{restriction on i}) with $m=0$. 
On the $s$\textsuperscript{th} component 
\[
\mathscr{D}^{(R_{x,k,s-1}, r_{t_s})}_x= \big( \mathscr{V}_{\alpha^{R_{x,k,s-1}}(x)}^{(0)} \oplus   \mathscr{V}_{\alpha^{R_{x,k,s-1}}(x)}^{(1)} \oplus \cdots \oplus \mathscr{V}_{\alpha^{R_{x,k,s-1}}(x)}^{(r_{t_{s}}-1)}\big)\otimes \mathscr{V}_x^{(R_{x,k,s-1})}
\]
of $\mathscr{D}^{(r_{k})}_x$, 
the map $\varsigma_k(x) =\pi_k(g)(x)$ acts as 
\[\diag(g(\alpha^{R_{x,k,s-1}}(x)), g(\alpha^{R_{x,k,s-1}+1}(x)), \ldots, g(\alpha^{R_{x,k,s-1}+r_{t_s}-1}(x)) ),\]
whereas $\varsigma_{t_s} \in \Gamma(\mathscr{M}_{t_s})$ acts on $\alpha^{R_{x,k,s-1}}(x)\in Y_{t_s}$ as 
\begin{align*}
\lefteqn{\varsigma_{t_s}(\alpha^{R_{x,k,s-1}}(x))} \\ 
&= \pi_{t_s}(g)(\alpha^{R_{x,k,s-1}}(x))\\
&= \diag(g(\alpha^{R_{x,k,s-1}}(x)), g(\alpha(\alpha^{R_{x,k,s-1}}(x))), \ldots, g(\alpha^{r_{t_s }-1}(\alpha^{R_{x,k,s-1}}(x))) )	
\end{align*}
 on the vector space 
 \begin{align*}
\lefteqn{\mathscr{D}_{\alpha^{R_{x,k,s-1}}(x)}^{(r_{t_s})}} \\
&= \mathscr{V}_{\alpha^{R_{x,k,s-1}}(x)}^{(0)} \oplus \mathscr{V}_{\alpha^{R_{x,k,s-1}}(x)}^{(1)} \oplus \mathscr{V}_{\alpha^{R_{x,k,s-1}}(x)}^{(2)} \oplus \cdots \oplus \mathscr{V}_{\alpha^{R_{x,k,s-1}}(x)}^{(r_{t_s }-1)} . \end{align*}
Therefore on $s$\textsuperscript{th} component $\mathscr{D}^{(R_{x,k,s-1}, r_{t_s})}_x=\mathscr{D}_{\alpha^{R_{x,k,s-1}}(x)}^{(r_{t_s})}\otimes \mathscr{V}_x^{(R_{x,k,s-1})}$,
\[
\varsigma_k(x) = \varsigma_{t_s}(\alpha^{R_{x,k,s-1}}(x)) \otimes \id_{\mathscr{V}^{(R_{x,k,s-1})}_x}, 
\]
from which we have
\[
\varsigma_k^{(R_{x,k,s-1}+i)}(x) = \varsigma_{t_s}^{(i)}(\alpha^{R_{x,k,s-1}}(x)).
\]  

Now let $\varsigma=\varsigma_1\oplus\cdots\oplus\varsigma_K=\pi_1(\xi)\oplus\cdots\oplus \pi_K(\xi)$ for some $\xi\in \E_Y$. Then $\pi_k(\xi)(x)$ acts on the $s$\textsuperscript{th} component of $\mathscr D_x^{(r_k)}$ as follows. If $(a_0,a_1,\dots, a_{r_k-1})\in \mathscr D_x^{(r_k)}$, then its $s$\textsuperscript{th} component is equal to \[(a_{R_{x,k,s-1}}, \dots, a_{R_{x,k,s}-1})\in \mathscr D_x^{(R_{x,k,s-1},r_{t_s})}\] and 
\begin{align*}
\lefteqn{\varsigma_k(x)|_{\mathscr D_x^{(R_{x,k,s-1},r_{t_s})}}(a_{R_{x,k,s-1}}, \dots, a_{R_{x,k,s}-1}) } \\
&= \pi_k(\xi)(x)|_{\mathscr D_x^{(R_{x,k,s-1},r_{t_s})}} (a_{R_{x,k,s-1}}, \dots, a_{R_{x,k,s}-1})  \\
&=  (0, \xi(\alpha^{R_{x,k,s-1}}(x))\otimes a_{R_{x,k,s-1}}, \dots, \xi(\alpha^{R_{x,k,s}-2}(x))\otimes a_{R_{x,k,s}-2} )\\
&= \pi_{t_s}(\xi)(\alpha^{R_{x,k,s-1}}(x)) (a_{R_{x,k,s-1}}, \dots, a_{R_{x,k,s}-1})\\
&=  \varsigma_{t_s}(x)(\alpha^{R_{x,k,s-1}}(x)) (a_{R_{x,k,s-1}}, \dots, a_{R_{x,k,s}-1}), 
\end{align*}   
which proves the lemma. 
\end{proof}

Recall from (\ref{restriction on i}) that if $\varsigma_k\in \Gamma(\mathscr M_k)$ has an $m^{th}$ lower subdiagonal matrix representation, then we have a map \[x\mapsto \varsigma_k^{(i)}(x):\overline{Y_k}\to \mathscr V_{\alpha^i(x)}^{(m)}\] obtained by restricting the linear map $\varsigma_k(x):\mathscr D^{(r_k)}_x\to \mathscr D^{(r_k)}_x$ onto  $\mathscr V_x^{(i)}$, a direct summand of $\mathscr D^{(r_k)}_x$.

\begin{lemma}\label{lemma:BC}
Let  $\varsigma=(\varsigma_1,\ldots,\varsigma_K)\in \Gamma(\mathscr{M}_1) \oplus \cdots \oplus \Gamma(\mathscr{M}_K)$ have the boundary decomposition property, and let $0\leq m< r_K$. If  $\varsigma_k$ has an $m$\textsuperscript{th} subdiagonal matrix representation for all $k$, then  there is a section $G\in \Gamma(\mathscr{V}^{(m)})$  of $\mathscr{V}^{(m)}$ such that 
\[
G\big|_{\alpha^i(\overline{Y_k})}= \varsigma_{k}^{(i)} \circ \alpha^{-i} \big|_{\alpha^i(\overline{Y_k})},  
\]
for all $0\leq i\leq r_{k}-1,$ and 
vanishes on $\alpha^{-m}(Y)\cup \alpha^{-m+1}(Y)\cup \cdots \cup \alpha^{-1}(Y)$ provided $1\leq m< r_k$.
\end{lemma}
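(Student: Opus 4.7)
The plan is to define $G$ fibrewise using the partition $X = \bigsqcup_{k=1}^{K}\bigsqcup_{i=0}^{r_k-1}\alpha^i(Y_k)$ from \eqref{X:union}. For $x = \alpha^i(y)$ with $y \in Y_k$ and $0\leq i \leq r_k-1$, set
\[
G(x) := \varsigma_k^{(i)}(y) \in \mathscr{V}^{(m)}_{\alpha^i(y)} = \mathscr{V}^{(m)}_x.
\]
This is well-defined as a set map since the $\alpha^i(Y_k)$ are pairwise disjoint. Note that on $\alpha^i(Y_k)$ this is automatically $\varsigma_k^{(i)} \circ \alpha^{-i}$, so the required identity on $\alpha^i(\overline{Y_k})$ will follow once we verify that $G$ actually extends continuously to $\alpha^i(\overline{Y_k})$ as this formula.

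The core issue is showing that for a boundary point $x_0 \in \overline{Y_k}\setminus Y_k$ the two candidate values for $G(\alpha^i(x_0))$ --- the one coming from the $Y_k$ tower, namely $\varsigma_k^{(i)}(x_0)$, and the one coming from the tower into which $\alpha^i(x_0)$ actually falls --- agree. By definition of $\mu_{x_0,k}$, there is a unique $s$ with $R_{x_0,k,s-1}\leq i < R_{x_0,k,s}$, and then $\alpha^i(x_0) = \alpha^{i-R_{x_0,k,s-1}}(\alpha^{R_{x_0,k,s-1}}(x_0)) \in \alpha^{i-R_{x_0,k,s-1}}(Y_{t_s})$, so the second candidate value is $\varsigma_{t_s}^{(i-R_{x_0,k,s-1})}(\alpha^{R_{x_0,k,s-1}}(x_0))$. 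The boundary decomposition property \eqref{s-component}, restricted to the direct summand $\mathscr{V}^{(i)}_{x_0}$ of the $s$-th component $\mathscr{D}^{(R_{x_0,k,s-1}, r_{t_s})}_{x_0} = \mathscr{D}^{(r_{t_s})}_{\alpha^{R_{x_0,k,s-1}}(x_0)} \otimes \mathscr{V}^{(R_{x_0,k,s-1})}_{x_0}$, together with the identification $\mathscr{V}^{(i)}_{x_0} = \mathscr{V}^{(i-R_{x_0,k,s-1})}_{\alpha^{R_{x_0,k,s-1}}(x_0)} \otimes \mathscr{V}^{(R_{x_0,k,s-1})}_{x_0}$, precisely gives
\[
\varsigma_k^{(i)}(x_0) = \varsigma_{t_s}^{(i-R_{x_0,k,s-1})}(\alpha^{R_{x_0,k,s-1}}(x_0)).
\]
Hence the restriction of $G$ to the closed set $\alpha^i(\overline{Y_k})$ coincides with the continuous section $\varsigma_k^{(i)}\circ\alpha^{-i}$. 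Since $X$ is the finite union of the closed sets $\alpha^i(\overline{Y_k})$ and $G$ is continuous on each and agrees on overlaps, the gluing lemma yields $G \in \Gamma(\mathscr{V}^{(m)})$.

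For vanishing, suppose $1\leq i \leq m$ and $x \in \alpha^{-i}(Y)$; write $x = \alpha^j(y)$ with $y \in Y_k$ and $0\leq j < r_k$. Then $\alpha^{i+j}(y) \in Y$, and since $r_k$ is the first return time of $y$ to $Y$, we must have $i+j \geq r_k$, i.e., $j \geq r_k - i \geq r_k - m$. But the $m$-th subdiagonal condition on $\varsigma_k$ means, by \eqref{m-diagonal component} and \eqref{s_k^i}, that $\varsigma_k(y)$ kills the summand $\mathscr{V}^{(j)}_y$ for $j \geq r_k - m$, so $\varsigma_k^{(j)}(y) = 0$ and therefore $G(x) = 0$. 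The main technical obstacle is the identification step in the second paragraph, where one must track carefully that the tensor factorizations of the fibres of $\mathscr{V}^{(\cdot)}$ along the orbit match the block structure \eqref{s-component} so that the values of $\varsigma_k^{(\cdot)}$ and $\varsigma_{t_s}^{(\cdot)}$ can genuinely be compared as elements of the same one-dimensional fibre of $\mathscr{V}^{(m)}$.
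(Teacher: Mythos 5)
Your proof is correct and follows essentially the same route as the paper's: both define $G$ piecewise over the Rokhlin tower partition, extract from the boundary decomposition property (restricted to the summand $\mathscr{V}^{(i)}_{x}$) the key identity $\varsigma_k^{(i)}(x)=\varsigma_{t_s}^{(i-R_{x,k,s-1})}(\alpha^{R_{x,k,s-1}}(x))$, and obtain the vanishing from the first-return-time inequality $i+j\geq r_k$ combined with the $m$\textsuperscript{th}-subdiagonal condition. The only (organizational) difference is that you define $G$ globally on the disjoint pieces $\alpha^i(Y_k)$ and then verify continuity on the closures, whereas the paper defines $G$ on the closed sets $\alpha^i(\overline{Y_k})$ and checks agreement on their pairwise overlaps directly.
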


\begin{proof} To construct a continuous section $G:X\to \mathscr V^{(m)}$ satisfying the properties stated in this lemma, we first note $X=\cup_{k=1}^K\cup_{i=0}^{r_k-1}\alpha^i(\overline{Y_k})$ and define $G$ on each $\alpha^i(\overline{Y_k})$ in such a way that the values of $G$ match up at any possible overlaps $\alpha^j(\overline{Y_l})\cap \alpha^i(\overline{Y_k})$. 

Let $x\in \overline{Y_k}\setminus Y_k$ be a point as in Definition~\ref{bd property}. Since $\varsigma$ has the boundary decomposition property, on each $s$\textsuperscript{th} component $\mathscr{D}^{(R_{x,k,s-1}, r_{t_s})}_x$ of $\mathscr{D}^{(r_{k})}_x$, 
\begin{equation}\label{eq:s-comp-decom}
\varsigma_k(x)|_{\mathscr{D}^{(R_{x,k,s-1}, r_{t_s})}_x} = \varsigma_{t_s}(\alpha^{R_{x,k,s-1}}(x)) \otimes id_{\mathscr{V}^{(R_{x,k,s-1})}_x}, 	
\end{equation} 
for $s=2,\dots, m(x,k)$. 
Also by assumption,
$ \varsigma_k(x) : \mathscr{D}^{(r_{k})}_x \to \mathscr{D}^{(r_{k})}_x $
has an $m$\textsuperscript{th} subdiagonal matrix representation, which means that $\varsigma_k(x)$ maps  $\mathscr V_x^{(i)}$ into  $\mathscr V_x^{(i+m)}=\mathscr V_{\alpha^i(x)}^{(m)}\otimes\mathscr V_x^{(i)}$. Hence,  
there exist elements $\varsigma_k^{(i)}(x) \in \mathscr{V}_{\alpha^{i}(x)}^{(m)}$ such that
\[
\varsigma_k(x)|_{\mathscr V_x^{(i)}} :   v \mapsto \varsigma_{k}^{(i)}(x)\otimes v,\quad   v \in \mathscr{V}^{(i)}_x, 
\]
where $\varsigma_k^{(i)}(x)=0$ for $i \geq r_k-m$. 
Equation (\ref{eq:s-comp-decom}) implies that for $i$ with $R_{x,k,s-1} \leq i < R_{x,k,s}$,  
\[
\varsigma_{k}^{(i)}(x)=\varsigma_{t_s}^{(i-R_{x,k,s-1})}(\alpha^{R_{x,k,s-1}}(x)).
\]

Now we define $G:X\to \mathscr V^{(m)}$   by
\[
G\big|_{\alpha^i(\overline{Y_k})}= \varsigma_{k}^{(i)} \circ \alpha^{-i} \big|_{\alpha^i(\overline{Y_k})},
\] 
for $i=0,1,2,\ldots, r_{k}-1$. 
The section $G$ is well defined if $x\in \alpha^{i}(\overline{Y_k}) \cap \alpha^{j}(\overline{Y_l})$ implies
\begin{equation}\label{eq:G-ij}
	\varsigma_{k}^{(i)} \circ \alpha^{-i}(x) =\varsigma_{l}^{(j)} \circ \alpha^{-j}(x).
\end{equation}
By (\ref{X:union}), it is enough to show that if $x\in \alpha^{i}(\overline{Y_k}) \cap \alpha^{j}(\overline{Y_l}) \cap \alpha^p(Y_q)$, then (\ref{eq:G-ij}) holds.
If $0<p-i<r_q$, then we would have $\overline{Y_k} \cap \alpha^{p-i}(Y_q) \neq \emptyset$, contradicting the fact that $Y \cap \alpha^{p-i}(Y_q) = \emptyset$. Thus 
\begin{equation}\label{ip}
\alpha^{i}(\overline{Y_k}) \cap \alpha^{p}(Y_q)\neq \emptyset \ \text{ implies }\ i\geq p
\end{equation} 
whenever $0\leq i<r_k$ and  $0\leq p<r_q$.

If $x=\alpha^i(y) \in \alpha^{i}(\overline{Y_k}) \cap \alpha^{p}(Y_q)$, for $p\leq i<r_k$ and 
$q$, which implies that  $y \in \overline{Y_k} \setminus Y_k$, 
then 
\[
\alpha^{i-p}(y)=\alpha^{-p}(x) \in Y_q.
\]
 Since $0\leq i-p <r_k$, there is an $n\geq 0$ such that
 \[
 r_{t_1} + r_{t_2} + \cdots + r_{t_n}  \leq i-p < r_{t_1} + r_{t_2} + \cdots + r_{t_{n+1}}
 \] with $r_{t_0}:=0$.
Since
\[
\alpha^{r_{t_1} + r_{t_2} + \cdots + r_{t_n}}(y) \in   Y_{t_{n+1}},\] 
the identity
\[
\alpha^{i-p-(r_{t_1} + r_{t_2} + \cdots + r_{t_n} )}(\alpha^{r_{t_1} + r_{t_2} + \cdots + r_{t_n} }(y))=\alpha^{i-p}(y) \in  Y
\]
implies that, since $ 0 \leq i-p-(r_{t_1} + r_{t_2} + \cdots + r_{t_n} ) < r_{t_{n+1}}$, 
\[
i-p = r_{t_1} + r_{t_2} + \cdots + r_{t_n}\ \text{ and }\ q=t_{n+1}.
\]
Thus we have 
\[
i = (r_{t_1} + r_{t_2} + \cdots + r_{t_n}) + p = R_{y,k,n} + p,
\]
and, by assumption,
\begin{align*}
\varsigma_{k}^{(i)} \circ \alpha^{-i}(x) & = \varsigma_{k}^{(i)}(y) \\
& =\varsigma_{t_{n+1}}^{(p)}( \alpha^{R_{y,k,n}}(y)) \\
& =\varsigma_{t_{n+1}}^{(p)}( \alpha^{i-p}(y)) \\
& =\varsigma_{q}^{(p)}( \alpha^{i-p}(y)) \\
& =\varsigma_{q}^{(p)} \circ \alpha^{-p}(x).	
\end{align*}
Similarly
\[
\varsigma_{l}^{(j)} \circ \alpha^{-j}(x)= \varsigma_{q}^{(p)} \circ \alpha^{-p}(x)
\]
can be obtained, and we see that $G$ is well defined.

Finally, if $x \in \alpha^{-j}(Y_q)$ for $j=1,\ldots,m$, then $x \in \alpha^i(Y_k) \cap \alpha^{-j}(Y_q)$ for some $0\leq i<r_k$, $1\leq k\leq K$, hence  $\alpha^j(x) \in \alpha^{i+j}(Y_k) \cap Y_q \neq \emptyset$. Then $i+j \geq r_{k}$, and $i \geq r_{k} -j \geq r_{k} -m$.
Therefore 
\[
G(x) = \varsigma_k^{(i)} \circ \alpha^{-i}(x)=0,
\]
since $\varsigma_k^{(i)}=0$ for $i \geq r_{k} -m$.
\end{proof}

\begin{corollary}\label{lemma:functionAh}
Let $\varsigma=(\varsigma_1,\ldots,\varsigma_K)\in \Gamma(\mathscr{M}_1) \oplus \cdots \oplus \Gamma(\mathscr{M}_K)$. Then  $\varsigma \in \pi(C(X))$ if and only if  $\varsigma$ has the boundary decomposition property, and $\varsigma_k(x) \in \Gamma(\mathscr{M}_k|_x)$ has a   diagonal matrix representation  for all  $x\in \overline{Y_k}$, $k=1,\dots, K$.
\end{corollary}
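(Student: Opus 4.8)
The plan is to establish the two directions of the equivalence separately. For the forward direction, suppose $\varsigma \in \pi(C(X))$, say $\varsigma = \pi(g)$ for some $g \in C(X)$. That $\varsigma$ has the boundary decomposition property is exactly the content of Lemma~\ref{lemma:bdp_ran(pi)} (applied to the special case of elements coming from $C(X)$, which is one of the two base cases handled there). That each $\varsigma_k(x)$ has a diagonal matrix representation is immediate from the formula $\pi_k(g)(x) = \diag(g(x), g(\alpha(x)), \dots, g(\alpha^{r_k-1}(x)))$ recorded in \eqref{pi_k matrix} and the discussion preceding Definition~\ref{def:covrep}: with respect to the local trivialization induced from an atlas of $\mathscr{V}$, $\pi_k(g)(x)$ acts on each line summand $\mathscr V_x^{(i)}$ by scalar multiplication by $g(\alpha^i(x))$, hence is diagonal.

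For the reverse direction, suppose $\varsigma = (\varsigma_1, \dots, \varsigma_K)$ has the boundary decomposition property and each $\varsigma_k(x)$ has a diagonal matrix representation for every $x \in \overline{Y_k}$. Apply Lemma~\ref{lemma:BC} with $m = 0$: since each $\varsigma_k$ has a $0$\textsuperscript{th} subdiagonal (i.e.\ diagonal) matrix representation and $\varsigma$ has the boundary decomposition property, there is a section $G \in \Gamma(\mathscr{V}^{(0)}) = \Gamma(X \times \mathbb{C}) = C(X)$ with $G|_{\alpha^i(\overline{Y_k})} = \varsigma_k^{(i)} \circ \alpha^{-i}|_{\alpha^i(\overline{Y_k})}$ for all $0 \le i \le r_k - 1$ and all $k$. (The vanishing clause of Lemma~\ref{lemma:BC} is vacuous when $m = 0$.) Here I identify $\varsigma_k^{(i)}(x)$, the scalar by which $\varsigma_k(x)$ acts on $\mathscr V_x^{(i)}$, via \eqref{restriction on i} with $m = 0$.

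It then remains to check that $\pi(G) = \varsigma$, i.e.\ $\pi_k(G) = \varsigma_k$ for each $k$. Fix $k$ and $x \in \overline{Y_k}$; choose $\mathbf U \in \mathcal U^{(r_K)}$ with $x \in \mathbf U$. On the basis element $e_{\mathbf U, i}(x)$ spanning $\mathscr V_x^{(i)}$, the operator $\pi_k(G)(x)$ acts by multiplication by $G(\alpha^i(x))$, while the defining property of $G$ gives $G(\alpha^i(x)) = \varsigma_k^{(i)}(\alpha^{-i}(\alpha^i(x))) = \varsigma_k^{(i)}(x)$, which is precisely the scalar by which $\varsigma_k(x)$ acts on $\mathscr V_x^{(i)}$. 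Since both $\pi_k(G)(x)$ and $\varsigma_k(x)$ are diagonal with respect to the basis $\{e_{\mathbf U, i}(x)\}_{i}$ and agree on each basis vector, they are equal; as $x \in \overline{Y_k}$ was arbitrary, $\pi_k(G) = \varsigma_k$, so $\varsigma = \pi(G) \in \pi(C(X))$.

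I do not anticipate a genuine obstacle here, since the corollary is essentially a bookkeeping consequence of Lemma~\ref{lemma:BC} (the $m=0$ case) together with Lemma~\ref{lemma:bdp_ran(pi)}; the only point requiring a little care is the identification $\Gamma(\mathscr{V}^{(0)}) \cong C(X)$ and the observation that a diagonal operator on a direct sum of line bundles is determined by its scalar action on each summand, so that verifying $\pi_k(G) = \varsigma_k$ reduces to matching the scalars $\varsigma_k^{(i)}$ built into $G$ by construction. One should also remark that the argument is consistent across the overlaps $\alpha^i(\overline{Y_k}) \cap \alpha^j(\overline{Y_l})$ — but this is exactly the well-definedness of $G$ already proved in Lemma~\ref{lemma:BC}, so nothing new is needed.
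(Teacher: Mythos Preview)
Your proof is correct and follows essentially the same approach as the paper: the forward direction is handled by Lemma~\ref{lemma:bdp_ran(pi)} together with the diagonal form of $\pi_k(g)$, and the reverse direction invokes Lemma~\ref{lemma:BC} with $m=0$ to produce $g\in C(X)=\Gamma(\mathscr V^{(0)})$ and then reads off $\varsigma_k=\pi_k(g)$ from $\varsigma_k^{(i)}(x)=g(\alpha^i(x))$. The paper's proof is terser (it only spells out the reverse direction), but the content is the same.
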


\begin{proof} 
Assume that $\varsigma$ has the boundary decomposition property and every $\varsigma_k(x)$ has a diagonal matrix representation. Then by Lemma \ref{lemma:BC} with $m=0$, there is a function $g\in C(X)=\Gamma(\mathscr V^{(0)})$ given by
\[
g\big|_{\alpha^i(\overline{Y_k})}= \varsigma_{k}^{(i)} \circ \alpha^{-i} \big|_{\alpha^i(\overline{Y_k})},\quad i=0,1,\dots, r_{k}-1.
\]	
Thus $\varsigma_{k}^{(i)}(x) = g(\alpha^i(x))$ for $x \in \overline{Y_k}$, which means that $\varsigma_k= \pi_k(g)$ for all $k$.
\end{proof}

\begin{corollary} \label{cor:boundarydecomp}
Let $\varsigma=(\varsigma_1,\ldots,\varsigma_K)\in \Gamma(\mathscr{M}_1) \oplus \cdots \oplus \Gamma(\mathscr{M}_K)$ and $m$ be a positive integer. Then  $\varsigma \in \pi(\mathcal{E}_Y^{\otimes m})$ if and only if $\varsigma$ satisfies the boundary decomposition property and has an $m$\textsuperscript{th} lower subdiagonal matrix representation. 
\end{corollary}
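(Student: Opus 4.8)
The plan is to prove both implications, using the results already established for $C(X)$ (Corollary~\ref{lemma:functionAh}) and for $\E_Y$ (Lemma~\ref{lemma:bdp_ran(pi)}, Lemma~\ref{lemma:surjective pi_k}, Lemma~\ref{lemma:BC}, and Proposition~\ref{EandSections}).

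\textbf{The ``only if'' direction.} Suppose $\varsigma = \pi(\xi)$ for some $\xi \in \E_Y^{\otimes m}$. That $\varsigma$ satisfies the boundary decomposition property is Lemma~\ref{lemma:bdp_ran(pi)}. That each $\varsigma_k$ has an $m$\textsuperscript{th} lower subdiagonal matrix representation follows from the identity \eqref{pi_k(xi)} in Remark~\ref{remark:identification}: since $\pi_k(\xi)(x)$ maps $\mathscr V_x^{(i)}$ into $\mathscr V_x^{(i+m)}$ and kills $\mathscr V_x^{(i)}$ for $i \geq r_k - m$, the matrix $(H^{(k)}_{\mathbf U})^{-1}(\varsigma_k(x))$ is exactly $m$\textsuperscript{th} subdiagonal by the characterisation in \eqref{m-diagonal component} of Remark~\ref{remark:matrix form}.

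\textbf{The ``if'' direction.} This is the substantive part. Assume $\varsigma = (\varsigma_1, \dots, \varsigma_K)$ satisfies the boundary decomposition property and each $\varsigma_k$ has an $m$\textsuperscript{th} lower subdiagonal matrix representation. By Lemma~\ref{lemma:BC}, there is a section $G \in \Gamma(\mathscr V^{(m)})$ with $G|_{\alpha^i(\overline{Y_k})} = \varsigma_k^{(i)} \circ \alpha^{-i}|_{\alpha^i(\overline{Y_k})}$ for all $0 \leq i \leq r_k - 1$ and $1 \leq k \leq K$, and with $G$ vanishing on $\alpha^{-1}(Y) \cup \cdots \cup \alpha^{-m}(Y)$ (here we use $1 \leq m < r_K$; note that if $m \geq r_k$ for some $k$ then $\varsigma_k = 0$ is already the zero section and creates no obstruction, while for $m \geq r_K$ the statement is vacuous since all $\varsigma_k = 0$). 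By Proposition~\ref{EandSections}, the vanishing of $G = \psi(\xi)$ on $\alpha^{-1}(Y) \cup \cdots \cup \alpha^{-m}(Y)$ is precisely the condition guaranteeing that $\xi := \psi^{-1}(G) \in \E_Y^{\otimes m} = E_m \subset \mathcal O(\E_Y)$. It then remains to check that $\pi(\xi) = \varsigma$, i.e., $\pi_k(\xi) = \varsigma_k$ for every $k$. Fix $k$ and $x \in \overline{Y_k}$. Using \eqref{pi_k(xi)}, $\pi_k(\xi)(x)$ restricted to $\mathscr V_x^{(i)}$ sends $a_i \mapsto \psi(\xi)(\alpha^i(x)) \otimes a_i = G(\alpha^i(x)) \otimes a_i$; and since $x \in \overline{Y_k}$ we have $\alpha^i(x) \in \alpha^i(\overline{Y_k})$, so $G(\alpha^i(x)) = \varsigma_k^{(i)}(\alpha^{-i}(\alpha^i(x))) = \varsigma_k^{(i)}(x)$. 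By \eqref{restriction on i}, this is exactly the action of $\varsigma_k(x)$ on $\mathscr V_x^{(i)}$, for all $0 \leq i < r_k - m$; for $i \geq r_k - m$ both sides vanish. Since the $\mathscr V_x^{(i)}$ span $\mathscr D^{(r_k)}_x$, we conclude $\pi_k(\xi)(x) = \varsigma_k(x)$, hence $\pi(\xi) = \varsigma$.

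\textbf{Main obstacle.} The only real work is already packaged into Lemma~\ref{lemma:BC}, whose proof handles the delicate compatibility of values of $G$ at overlaps $\alpha^i(\overline{Y_k}) \cap \alpha^j(\overline{Y_l})$ of the Rokhlin towers and the vanishing condition; granting that lemma, the remaining argument here is a bookkeeping verification that $\psi^{-1}(G)$ lands in $\E_Y^{\otimes m}$ (via Proposition~\ref{EandSections}) and that $\pi$ sends it to $\varsigma$ (via the formula \eqref{pi_k(xi)}). One minor point to state carefully is the degenerate case $r_k \leq m$: there $\Gamma(\mathscr M_k)$ consists of zero sections in $m$\textsuperscript{th} subdiagonal form, the boundary decomposition property is automatically consistent with $G$ vanishing on the relevant set, and $\pi_k(E_m) = 0$ by the last display of Remark~\ref{remark:identification}, so the equality $\pi_k(\xi) = \varsigma_k = 0$ is immediate.
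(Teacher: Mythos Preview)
Your proof is correct and follows essentially the same approach as the paper: apply Lemma~\ref{lemma:BC} to produce the section $G \in \Gamma(\mathscr{V}^{(m)})$, invoke Proposition~\ref{EandSections} to conclude $\psi^{-1}(G) \in \E_Y^{\otimes m}$, and then verify $\pi(\psi^{-1}(G)) = \varsigma$ via \eqref{pi_k(xi)}. Your write-up is in fact more detailed than the paper's, which only sketches the ``if'' direction and leaves the ``only if'' direction and the final verification $\pi(\xi) = \varsigma$ implicit.
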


\begin{proof} 
Assume that $\varsigma$ satisfies the boundary decomposition property and has an $m$\textsuperscript{th} (lower) subdiagonal matrix representation. 
Then 	
$\varsigma_k(x)$ maps $\mathscr{V}_x^{(i)}$ in $\mathscr{D}^{(r_k)}_{x}= \mathscr{V}_x^{(0)} \oplus \mathscr{V}_x^{(1)} \oplus \mathscr{V}_x^{(2)} \oplus \cdots \oplus \mathscr{V}_x^{(r_k-1)}$ to $\mathscr{V}_x^{(m+i)}$ for $i<r_{k}-m$ and $\{0\}$ for $r_{k}-m \leq i<r_{k} $. 
Again by Lemma \ref{lemma:BC}, there is a $G \in \Gamma(\mathscr{V}^{(m)}, \alpha^m )$ such that 
\[
G\big|_{\alpha^i(\overline{Y_k})}= \varsigma_{k}^{(i)} \circ \alpha^{-i} \big|_{\alpha^i(\overline{Y_k})},\quad i=0,1,2,\ldots, r_{k}-m-1,
\]	and 
vanishes on $\alpha^{-m}(Y) \cup\alpha^{-m+1}(Y) \cup \cdots \cup\alpha^{-1}(Y)$. By Proposition \ref{EandSections}, there is $\xi \in \mathcal{E}_Y^{\otimes m}$ with $\psi(\xi)=G$. From the definition of $G$ we see that $\varsigma=\pi(\xi)$.
\end{proof}

Since $\Gamma(\mathscr{M}_1) \oplus \cdots \oplus \Gamma(\mathscr{M}_K)$ is generated   (as a $\mathrm C^*$-algebra) by  elements that have $m$\textsuperscript{th} (lower) subdiagonal matrix representation, we obtain the following theorem.

\begin{theorem}\label{thm:bdp}
For $\varsigma=(\varsigma_1,\ldots,\varsigma_K)\in \Gamma(\mathscr{M}_1) \oplus \cdots \oplus \Gamma(\mathscr{M}_K)$, $\varsigma$  satisfies the boundary decomposition property   if and only if $\varsigma \in \pi(\mathcal{O}(\mathcal{E}_Y))$.	
\end{theorem}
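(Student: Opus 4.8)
The plan is to prove the equivalence in two directions, most of the work having already been done in the preceding lemmas and corollaries. The forward direction is immediate: Lemma~\ref{lemma:bdp_ran(pi)} already shows that every element of $\pi(\mathcal O(\E_Y))$ has the boundary decomposition property. For the converse, I would argue that the set $S$ of all elements of $\Gamma(\mathscr M_1)\oplus\cdots\oplus\Gamma(\mathscr M_K)$ satisfying the boundary decomposition property is a $\mathrm C^*$-subalgebra which is generated by elements with an $m$\textsuperscript{th} lower subdiagonal matrix representation for $0\le m<r_K$, and then invoke Corollary~\ref{lemma:functionAh} (case $m=0$) and Corollary~\ref{cor:boundarydecomp} (case $m\ge 1$) to conclude that each such generator lies in $\pi(\mathcal O(\E_Y))$, whence $S\subseteq\pi(\mathcal O(\E_Y))$.

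Concretely, I would first observe (as already remarked in the proof of Lemma~\ref{lemma:bdp_ran(pi)}) that the boundary decomposition property is closed under addition, scalar multiplication, products, adjoints and norm limits, so $S$ is a $\mathrm C^*$-subalgebra. Next I would recall from Remark~\ref{remark:matrix form} that any $\varsigma_k\in\Gamma(\mathscr M_k)$ decomposes, at each fibre, into its subdiagonal and superdiagonal parts; since the diagonal of a matrix representation is trivialization-independent and the $m$\textsuperscript{th} subdiagonal part of a matrix representation is again an $m$\textsuperscript{th} subdiagonal part after any change of local trivialization, the map sending $\varsigma_k(x)$ to its ``$m$\textsuperscript{th} subdiagonal component'' is a well-defined global operation on $\Gamma(\mathscr M_k)$. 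Taking adjoints covers the superdiagonal parts. Hence $\Gamma(\mathscr M_1)\oplus\cdots\oplus\Gamma(\mathscr M_K)$, and consequently $S$, is generated as a $\mathrm C^*$-algebra by those $\varsigma$ all of whose components $\varsigma_k$ have an $m$\textsuperscript{th} lower subdiagonal matrix representation (together with their adjoints), ranging over $0\le m\le r_K-1$.

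Then for such a generator $\varsigma\in S$: if $m=0$, Corollary~\ref{lemma:functionAh} gives $\varsigma=\pi(g)$ for some $g\in C(X)$; if $1\le m\le r_K-1$, Corollary~\ref{cor:boundarydecomp} gives $\varsigma=\pi(\xi)$ for some $\xi\in\E_Y^{\otimes m}$. (When $m\ge r_k$ the relevant component $\varsigma_k$ is forced to be zero, consistent with $\pi_k(E_m)=0$ as noted in Remark~\ref{remark:identification}, so no compatibility issue arises across the summands.) In either case $\varsigma\in\pi(\mathcal O(\E_Y))$, and the adjoint $\varsigma^*$ lies in $\pi(\mathcal O(\E_Y))$ as well since $\pi$ is a $^*$-homomorphism. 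Because $\pi(\mathcal O(\E_Y))$ is a $\mathrm C^*$-subalgebra and these generators lie in it, we conclude $S\subseteq\pi(\mathcal O(\E_Y))$, which together with Lemma~\ref{lemma:bdp_ran(pi)} completes the proof.

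The only genuinely delicate point is the generation claim in the second paragraph: one must be sure that ``having an $m$\textsuperscript{th} subdiagonal matrix representation'' is a coordinate-free notion on $\Gamma(\mathscr M_k)$ and that the diagonal/off-diagonal splitting is compatible with the transition functions $\mathrm{Ad}(u_{\mathbf{VU}})$ of Section~\ref{sec:loctriv}. This is precisely what Remark~\ref{remark:matrix form}(1)--(2) records (the diagonal is invariant, and an $m$\textsuperscript{th} subdiagonal matrix stays $m$\textsuperscript{th} subdiagonal under the diagonal-unitary conjugation in \eqref{varsigma-U-V}), so the obstacle is more a matter of citing it carefully than of new content; everything else is a routine assembly of the already-established corollaries.
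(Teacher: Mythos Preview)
Your proposal is correct and follows the same route as the paper: one direction is Lemma~\ref{lemma:bdp_ran(pi)}, and for the other you decompose into $m$\textsuperscript{th} subdiagonal parts and invoke Corollary~\ref{lemma:functionAh} and Corollary~\ref{cor:boundarydecomp}. The paper's own proof is the single sentence preceding the theorem (``Since $\Gamma(\mathscr{M}_1)\oplus\cdots\oplus\Gamma(\mathscr{M}_K)$ is generated\ldots''), so your write-up is in fact more explicit than the paper's; the one point you leave implicit (and the paper does too) is that the projection onto the $m$\textsuperscript{th} subdiagonal band preserves the boundary decomposition property---this follows immediately because the $m$\textsuperscript{th} subdiagonal of a block-diagonal matrix is the block-diagonal of the $m$\textsuperscript{th} subdiagonals of the blocks, so if $\varsigma$ satisfies \eqref{s-component} then so does each $\varsigma^{(m)}$.
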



\subsubsection{An RSH structure for $\mathcal O(\E_Y)$ }

Let $x\in \overline{Y_2}\setminus Y_2=\overline{Y_2}\cap Y_1$ be a point as in Definition~\ref{bd property}.  Then $r_{t_s}=r_1$ for all $s=1,\dots, m:=m(x,2)$ and thus 
$R_{x,2,s}=r_{t_1}+\cdots+r_{t_s}=sr_1$ for every $s$. Note that $m(x,2)=m$ for every $x\in \overline{Y_2}\setminus Y_2$.

Let $B_1:=\Gamma(\mathscr M_1)$ and let $\rho_1:\Gamma(\mathscr M_2)\to \Gamma(\mathscr M_2|_{\overline{Y_2}\setminus Y_2})$ be the restriction map. By Lemma~\ref{lemma:surjective pi_k}, the map $\pi_1 : \mathcal{O}(\E_Y) \to \Gamma(\mathscr{M}_1)$ is surjective. Thus, for any $\varsigma_1 \ \in \Gamma(\mathscr M_1)$, there exists $\xi \in \mathcal{O}(\E_Y)$ such that $\pi_1(\xi) = \varsigma_1$. Define 
\[\varphi_1:\Gamma(\mathscr M_1)\to \Gamma(\mathscr M_2|_{\overline{Y_2}\setminus Y_2})\] 
by 
\[
\varphi_1(\varsigma_1)(x) :=\pi_2(\xi)(x).
\]
We claim that $\varphi_1$ is a well-defined $^*$-homomorphism.  Suppose that  $\pi_1(\xi_1)=\pi_1(\xi_2)=\varsigma_1$. By Theorem~\ref{thm:bdp}, $\varsigma=(\varsigma_1, \dots, \varsigma_K):=\pi(\xi_1)\in \Gamma(\mathscr M_1)\oplus\cdots\oplus\Gamma(\mathscr M_K)$  satisfies the boundary decomposition property. Then, using (\ref{s-component}), we have that  
\begin{align*}
\pi_2(\xi_1)(x)|_{\mathscr D_x^{(R_{x,2,s-1},r_{1})}}&\ = \varsigma_2(x)|_{\mathscr D_x^{(R_{x,2,s-1},r_{1})}}\\
&\ = \varsigma_1(\alpha^{R_{x,2,s-1}}(x))\otimes id_{\mathscr V_x^{(R_{x,2,s-1})}}\\
&\ =\pi_1(\xi_1)(\alpha^{R_{x,2,s-1}}(x))\otimes id_{\mathscr V_x^{(R_{x,2,s-1})}}\\
&\ =\pi_1(\xi_2)(\alpha^{R_{x,2,s-1}}(x))\otimes id_{\mathscr V_x^{(R_{x,2,s-1})}}\\
& \ =\pi_2(\xi_2)(x)|_{\mathscr D_x^{(R_{x,2,s-1},r_{1})}}.
\end{align*} 
It follows $\varphi_1$ is well defined, and obviously it is a $^*$-homomorphism, which proves the claim. Let 
\begin{align*}
B_2:=&\ \Gamma(\mathscr M_1)\oplus_{\Gamma(\mathscr M_2|_{\overline{Y_2} \setminus Y_2})} \Gamma(\mathscr M_2)\\
=&\ \{(\varsigma_1,\varsigma_2)\in \Gamma(\mathscr M_1)\oplus \Gamma(\mathscr M_2)\mid \varphi_1(\varsigma_1)=\rho_1(\varsigma_2)\}.   
\end{align*}
Then $B_2$ is an RSH algebra.

As in \cite[Theorem 11.3.19]{GioKerPhi:CRM}, one can continue this process to form an RSH subalgebra $B_k$ of $\Gamma(\mathscr M_1)\oplus\cdots\oplus \Gamma(\mathscr M_k)$ for all $k=3,\dots, K$ simultaneously showing that at each step $k$, every element in $B_k$ satisfies the boundary decomposition property (only) up to $k$.

\begin{theorem}\label{thm:rsh} 
For every $k = 1, 2,\ldots,K$, $B_k=\oplus_{i=1}^k\pi_i(\mathcal O(\E_Y))$ is a recursive subhomogeneous $\mathrm C^*$-subalgebra of $\oplus_{i=1}^k \Gamma(\mathscr M_i)$ such that  $B_1=\Gamma(\mathscr M_1)$, and for $k=2, \dots, K$, 
 \[B_k=B_{k-1}\oplus_{\Gamma(\mathscr M_{k}|_{\overline{Y_k}\setminus Y_k})} \Gamma(\mathscr M_k).\]  
\end{theorem}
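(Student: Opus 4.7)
The plan is to prove the theorem by induction on $k$, simultaneously establishing that $B_k$ equals the claimed iterated pullback and that $B_k$ is recursive subhomogeneous. For the base case $k=1$: since $X\setminus Y$ is open, the first-return function $r_Y$ is lower semicontinuous, so $Y_1=\{y\in Y:r_Y(y)\leq r_1\}$ is closed. Lemma~\ref{lemma:surjective pi_k} then yields $B_1=\pi_1(\mathcal O(\E_Y))=\Gamma(\mathscr M_1)$, which is RSH by \cite[Proposition 1.7]{Phillips:recsub}.

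For the inductive step, assuming $B_{k-1}$ is RSH, I would extend the $k=2$ construction preceding the theorem: define $\varphi_{k-1}\colon B_{k-1}\to\Gamma(\mathscr M_k|_{\overline{Y_k}\setminus Y_k})$ by choosing any preimage $\xi$ of $(\varsigma_1,\dots,\varsigma_{k-1})$ under $\pi_1\oplus\cdots\oplus\pi_{k-1}$ in $\mathcal O(\E_Y)$ and setting $\varphi_{k-1}(\varsigma_1,\dots,\varsigma_{k-1}):=\pi_k(\xi)|_{\overline{Y_k}\setminus Y_k}$. Well-definedness is immediate from Lemma~\ref{lemma:bdp_ran(pi)}: at $x\in\overline{Y_k}\setminus Y_k$, the boundary decomposition property forces $\pi_k(\xi)(x)$ to be block-diagonal with blocks $\pi_{t_s}(\xi)(\alpha^{R_{x,k,s-1}}(x))$, and since every $t_s<k$ these blocks depend only on $(\varsigma_1,\dots,\varsigma_{k-1})$. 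Writing $P_k:=B_{k-1}\oplus_{\Gamma(\mathscr M_k|_{\overline{Y_k}\setminus Y_k})}\Gamma(\mathscr M_k)$ for the pullback along $\varphi_{k-1}$ and the restriction map $\rho_{k-1}$, the inclusion $B_k\subseteq P_k$ is then a direct consequence of the BDP.

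The core of the argument is the reverse inclusion. Given $(\varsigma_1,\dots,\varsigma_{k-1},\tau)\in P_k$, pick $\xi_0\in\mathcal O(\E_Y)$ with $\pi_i(\xi_0)=\varsigma_i$ for $i<k$; the pullback compatibility makes $\mu:=\tau-\pi_k(\xi_0)$ vanish on $\overline{Y_k}\setminus Y_k$, so it suffices to construct $\eta\in\mathcal O(\E_Y)$ with $\pi_i(\eta)=0$ for $i<k$ and $\pi_k(\eta)=\mu$, since then $\xi_0+\eta$ is the desired preimage. My plan is to decompose $\mu=\sum_m \mu^{(m)}$ into its $m$\textsuperscript{th} subdiagonal parts (handling $m<0$ symmetrically via $\widehat{\E}_Y^{\otimes|m|}$ and Remark~\ref{remark:identification}) and lift each $\mu^{(m)}$ separately, following Lemma~\ref{lemma:surjective pi_k} but with an enlarged vanishing set. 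For $m\geq 0$, Remark~\ref{remark:matrix form}(3) attaches to $\mu^{(m)}$ continuous vectors $\mu^{(m),(i)}(x)\in\mathscr V_{\alpha^i(x)}^{(m)}$ over $\overline{Y_k}$ that vanish on $\overline{Y_k}\setminus Y_k$. Define a section $G$ of $\mathscr V^{(m)}$ over the closed subset
\[
F := \bigcup_{i=0}^{r_k-m-1}\alpha^i(\overline{Y_k}) \;\cup\; \bigcup_{l=1}^{m}\alpha^{-l}(Y) \;\cup\; \bigcup_{i'=1}^{k-1}\bigcup_{j=0}^{r_{i'}-m-1}\alpha^j(\overline{Y_{i'}})
\]
of $X$ by $G(\alpha^i(x)):=\mu^{(m),(i)}(x)$ on the first union and $G:=0$ elsewhere. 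Tietze extension for vector bundles \cite[Lemma 1.4.1]{Ati:k-theory} gives $\widetilde G\in\Gamma(\mathscr V^{(m)})$, and Proposition~\ref{EandSections} then produces $\eta^{(m)}:=\psi^{-1}(\widetilde G)\in\E_Y^{\otimes m}\subseteq\mathcal O(\E_Y)$ with $\pi_k(\eta^{(m)})=\mu^{(m)}$ and $\pi_i(\eta^{(m)})=0$ for $i<k$; summing over $m$ yields $\eta$.

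The main obstacle is the compatibility check for the definition of $G$ on $F$: overlaps between $\alpha^i(\overline{Y_k})$ and the zero-value regions could in principle produce conflicting specifications. The argument closely mirrors the well-definedness check in Lemma~\ref{lemma:BC}: the Rokhlin tower disjointness together with the stratification $\overline{Y_k}\setminus Y_k\subseteq Y_1\cup\cdots\cup Y_{k-1}$ ensures that any overlap point lies in $\alpha^i(\overline{Y_k}\setminus Y_k)$ for some $i$, where $\mu^{(m),(i)}$ already vanishes by the hypothesis on $\mu$. To conclude that $B_k=P_k$ is RSH, I would invoke the fact that $\Gamma(\mathscr M_k)$ is RSH by \cite[Proposition 1.7]{Phillips:recsub} and concatenate its RSH decomposition term by term with that of $B_{k-1}$ through the pullback axiom.
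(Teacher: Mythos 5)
Your proposal is correct and follows essentially the same route as the paper: induction on $k$, the pullback description of $B_k$ via the boundary decomposition property, and surjectivity by subtracting a preimage of $(b_1,\dots,b_{k-1})$ and then lifting a subdiagonal section vanishing on $\overline{Y_k}\setminus Y_k$ through a Tietze extension over a suitable closed set together with Proposition~\ref{EandSections}. Your closed set $F$ differs only cosmetically from the paper's set $\mathcal C$, and your explicit treatment of the superdiagonal parts via adjoints is implicit in the paper's reduction to lower subdiagonal generators.
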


 \begin{proof}
We prove the theorem by induction on $k$. First note that by \cite[Proposition 1.7]{Phillips:recsub} , $B_1 := \Gamma(\mathscr M_1)$  is a recursive subhomogeneous algebra and $\pi_1:\mathcal O(\E_Y)\to  \Gamma(\mathscr M_1)$ is surjective by Lemma~\ref{lemma:surjective pi_k}. 
 
Assume that for $2\leq k\leq K-1$, the $^*$-homomorphism \[\oplus_{i=1}^{k-1} \pi_i : \mathcal O(\E_Y) \to B_{k-1} \subset \oplus_{i=1}^{k-1}\Gamma(\mathscr M_i)\] is  surjective onto the recursive subhomogeneous algebra $B_{k-1}$. 
We now construct $B_k$.  For $(b_1,b_2,\ldots, b_{k-1}) \in B_{k-1}$, since $\oplus_{i=1}^{k-1} \pi_i (\mathcal O(\E_Y))=  
B_{k-1}$,  there is $\xi \in \mathcal O(\E_Y)$ with $\oplus_{i=1}^{k-1} \pi_i(\xi)=(b_1,b_2,\ldots, b_{k-1})$. Define
\[\varphi_{k-1}(b_1,b_2,\ldots, b_{k-1}) :=\pi_k(\xi)|_{\overline{Y_k} \setminus Y_k}.
\] 
By Theorem \ref{thm:bdp}, 
$\pi_k(\xi)|_{\overline{Y_k}\setminus Y_k}$  is determined by $ \pi_i(\xi) = b_i$, $1\leq i < k$, namely if $\oplus_{i=1}^{k-1} \pi_i(\xi)=(b_1, \dots, b_{k-1})=\oplus_{i=1}^{k-1} \pi_i(\xi')$ for $\xi$, $\xi'\in \mathcal O(\E_Y)$, then $\pi_k(\xi)|_{\overline{Y_k}\setminus Y_k}=\pi_k(\xi')|_{\overline{Y_k}\setminus Y_k}$. 
Hence  
\[
\varphi_{k-1} : B_{k-1} \to \Gamma(\mathscr M_k |_{\overline{Y_k} \setminus Y_k})
\]
is a well-defined $^*$-homomorphism.

Define $B_k$ to be 
\[
B_k:=B_{k-1}\oplus_{\Gamma(\mathscr M_k|_{\overline{Y_k}\setminus Y_k})}\Gamma(\mathscr M_k)\subset \oplus_{i=1}^k\Gamma(\mathscr M_i),
\] 
where $\rho_{k-1}: \Gamma(\mathscr M_k)\to \Gamma(\mathscr M_k|_{\overline{Y_k}\setminus Y_k})$ is the restriction map. 
Then $\oplus_{i=1}^k \pi_i(\mathcal O(\E_Y))$ is a $\mathrm C^*$-subalgebra of the recursive subhomogeneous algebra $B_k$.

Now we prove that $\oplus_{i=1}^{k} \pi_i : \mathcal O(\E_Y) \to B_k$ is surjective.
Let 
\[
(b_1,b_2,\ldots,b_k) \in B_k=B_{k-1}\oplus_{\Gamma(\mathscr M_k|_{\overline{Y_k}\setminus Y_k})}\Gamma(\mathscr M_k).
\] 
By the induction hypothesis,  there is $\xi \in \mathcal O(\E_Y)$ such that  
\[\oplus_{i=1}^{k-1}\pi_i(\xi)=(b_1,b_2,\ldots,b_{k-1}).
\] 
Consider the element 
\[
(b_1,b_2,\ldots,b_k) - \oplus_{i=1}^{k}\pi_i(\xi) = (0,0,\ldots, 0 , b_k - \pi_k(\xi)).
\] 
The last coordinate $b_k-\pi_k(\xi)$ is not necessarily $0$ over the whole $\overline{Y_k}$ but equal to $0$ on $\overline{Y_k}\setminus Y_k$ since $\pi_k(\xi)|_{\overline{Y_k} \setminus Y_k} =\varphi_{k-1}(b_1,b_2,\ldots, b_{k-1})=  \rho_{k-1}(b_k)$. 
It is thus enough to claim that for any $\varsigma \in \Gamma(\mathscr M_k)$ with $\varsigma|_{\overline{Y_k} \setminus Y_k}=0$, there is $\eta \in \mathcal O(\E_Y)$ with \[\pi_i(\eta)=0\ \text{for } i=1,2,\ldots,k-1, \ \text{ and } \pi_k(\eta)= \varsigma.\] 
In fact, the claim implies that for $\varsigma:=b_k - \pi_k(\xi) \in \Gamma(\mathscr M_k)$ there is such an $\eta \in \mathcal O(\E_Y)$ and then $(b_1,b_2,\ldots,b_k)=\oplus_{i=1}^{k}\pi_i(\xi+\eta)$ follows.

To prove the claim,  let $\varsigma \in \Gamma(\mathscr M_k)$ vanishes on $\overline{Y_k} \setminus Y_k$. We may assume that $\varsigma $ is $m$\textsuperscript{th} lower subdiagonal in matrix representation, that is, 
\[
\widetilde{\varsigma}_U(x) =
\begin{matrix}
\hfill m\left\{\vphantom{\begin{matrix} 0 \\ \vdots \\ 0 \end{matrix}}\right.\\[7mm] 
r_k-m\left\{\vphantom{\begin{matrix} \widetilde{s}_k^{(0)}(x) \\0 \\ \vdots \\ 0 \end{matrix}}\right.
\end{matrix}%
\begin{bmatrix}
0 & 0  & \cdots & 0 & 0 & \cdots & 0 & 0 \\
\vdots & \vdots &   & \vdots &   \vdots & &\vdots & \vdots  \\
0 & 0  & \cdots & 0 & 0 & \cdots & 0 & 0 \\
\widetilde{\varsigma}_U^{(0)}(x)  & 0 & \cdots & 0 & 0 & \cdots & 0 &  0  \\
0 &\widetilde{\varsigma}_U^{(1)}(x)   & \cdots & 0& 0 & \cdots & 0 & 0  \\
\vdots  &\vdots   & \ddots & \vdots & \vdots & \cdots & \vdots & \vdots  \\
0 & 0  & \cdots & \widetilde{\varsigma}_U^{(r_k-1-m)}(x)& 0 & \cdots & 0  & 0  
\end{bmatrix},
\]
where $U\in \mathcal U$ is any open set with $x\in U$. Then for $i=0, \dots, r_k-1-m$, there exists  $\varsigma^{(i)}(x) \in \mathscr{V}_{\alpha^{i}(x)}^{(m)}$ such that the map
\[\varsigma(x)|_{\mathscr{V}^{(i)}_x}: \mathscr{V}^{(i)}_x \to \mathscr{V}^{(m+i)}_x =\mathscr{V}_{\alpha^{i}(x)}^{(m)} \otimes \mathscr{V}^{(i)}_x\] 
is given by  $v\mapsto \varsigma^{(i)}(x)\otimes v$ for $v\in \mathscr{V}^{(i)}_x$. On $\overline{Y_k} \setminus Y_k$, all $\varsigma^{(i)}$'s are 0 by assumption of $\varsigma$.
For notational convenience, set 
\[Y_{[i,j]}:=\overline{Y_i\cup\cdots \cup Y_j}, \ \ i\leq j.\]   
Note that there is $j_m\in \{1,\dots, k\}$ for which  \[r_{j_m} -1< r_k -1- m \leq r_{j_m+1}-1.\]
In particular, if $m=0$, then $r_{j_m+1}=r_k$ (or $j_m=k-1$). Consider the following closed subset 
\begin{align*}
\mathcal C
    =&\, \cup_{i=0}^{r_1-1}\alpha^i(Y_{[1,k]})\,\cup\, \cup_{i=r_1}^{r_2-1}\alpha^i(Y_{[2,k]})\,\cup\, \cdots \,\cup\, \cup_{i=r_{j_m}}^{r_k-1-m}\alpha^i(Y_{[j_m+1,k]})\\
    &\, \cup\, \cup_{i=1}^m\alpha^{-i}(Y)
\end{align*}
of $X$. 

We will define a section $\widetilde\xi\in \Gamma(\mathscr V^{(m)})$ such that $\eta:=\psi^{-1}(\widetilde\xi)$ satisfies the claim, where $\psi$ is the isomorphism of Proposition~\ref{EandSections}. 
For this, we first define a local section $\xi: \mathcal C\to \mathscr V^{(m)}|_{\mathcal C}$ and then extend it to $\widetilde\xi$ over the whole space $X$. Since the closed subsets $\alpha^i(Y_{[l,k]})$, $1\leq l\leq j_m+1$, $r_{l-1}\leq i\leq r_l-1$,  of $\mathcal C$ have overlaps (and contain $\alpha^i(\overline{Y_k})$),  we need  $\xi$ to be well defined on any possible overlaps. 

Define, for $x\in \mathcal C$,  
 \[
\xi(x): = \begin{cases}
	\varsigma^{(i)}\circ \alpha^{-i}(x), &\text{for $x\in \alpha^i(\overline{Y_k}) $},\\
	0, &\text{otherwise.}
\end{cases}
\] 
In order to check that $\xi(x)$ is well-defined on any possible overlaps, first let \[x=\alpha^i(z_1)=\alpha^j(z_2)\in \alpha^i(\overline{Y_k})\cap \alpha^j(\overline{Y_k})\] for some $z_1, z_2 \in \overline{Y_k}$ and $i<j$. Then 
$\alpha^{j-i}(z_2)=z_1\in Y$ while $0<j-i < r_k$. Thus   $z_2 \notin Y_k$, and so  $ z_2 \in \overline{Y_k} \setminus Y_k$. Then 
$\varsigma^{(j)}\circ\alpha^{-j}(x)=\varsigma^{(j)}(z_2)=0$. Also
$\alpha^{r_k-j+i}(z_1) =\alpha^{r_k}(z_2) \in Y
$
implies $z_1 \notin Y_k$ since $0<r_k-j+i<r_k$. Then $z_1\in \overline{Y_k}\setminus Y_k$ and similarly we have $\varsigma^{(i)}\circ\alpha^{-i}(x)=\varsigma^{(i)}(z_1)=0$. Thus $\xi$ is well defined (to be $0$) on overlaps of the form $\alpha^i(\overline{Y_k})\cap \alpha^j(\overline{Y_k})$. 
To show that $\xi$ is well defined to be $0$ on an intersection of the form  
\[
\alpha^i(\overline{Y_k}) \cap \big(\cup_{j=1}^m \alpha^{-j}(Y)\big)
\] for $0\leq i \leq r_k-1-m$, suppose that $x=\alpha^i(\overline{y_k})\in \cup_{j=1}^m \alpha^{-j}(Y)$ for some $\overline{y_k}\in \overline{Y_k}$. Then $\alpha^{j+i}(\overline{y_k})\in Y$ for some $j$, but $0\leq j+i\leq r_k-1$ hence $\overline{y_k}\notin Y_k$.   Then $\overline{y_k}\in \overline{Y_k}\setminus Y_k$ again, and then $\varsigma^{(i)}\circ \alpha^{-i}(x)=\varsigma^{(i)}(\overline{y_k})=0$. 
 Now suppose $\alpha^p(\overline{Y_q})\cap \alpha^i(Y_k)\neq \emptyset$ for some $1\leq q<k$,  $0\leq p<r_q$, $0\leq i<r_k$, with an element $x=\alpha^p(\overline{y_q})=\alpha^i(y_k)$ where  $\overline{y_q}\in \overline{Y_q}$ and $y_k\in Y_k$. Then $p\geq i$ by (\ref{ip}), and $\alpha^{r_q-p+i}(y_k)=\alpha^{r_q-p}(x)=\alpha^{r_q-p}(\alpha^p(\overline{y_q}))=\alpha^{r_q}(\overline{y_q})\in Y$, which is not possible because $0<r_q-p+i<r_k$ and $y_k\in Y_k$. Thus $\alpha^p(\overline{Y_q})\cap \alpha^i(\overline{Y_k})=\alpha^p(\overline{Y_q})\cap \alpha^i(\overline{Y_k}\setminus Y_k)$ on which $\varsigma^{(i)}\circ \alpha^{-i}=0$. 
 Thus $\xi$ is well defined over the closed subset $\mathcal C$ of $X$.  
 
Using the Tietze extension theorem for vector bundles, we further extend the domain of $\xi$ to the whole $X$ to obtain a section $\widetilde{\xi}\in \Gamma(\mathscr V^{(m)})$ over $X$ such that 
\[
\widetilde{\xi}|_{\alpha^i(\overline{Y_q})}=\xi|_{\alpha^i(\overline{Y_q})}=0\]  for $ 1 \leq q \leq k-1 $, $0\leq i\leq \min\{r_q-1, r_k-1-m\}$,  while
\[
\widetilde{\xi}|_{\alpha^i(\overline{Y_k})}=\xi|_{\alpha^i(\overline{Y_k})},
\]  for  $0\leq i\leq r_k-1-m$,  
and such that
\[
\widetilde{\xi}|_{\cup_{j=1}^m \alpha^{-j}(Y)}=0.
\] 
Under the isomorphism $\psi:\E^{\otimes m}= \Gamma(\mathscr V,\alpha)^{\otimes m}\to \Gamma(\mathscr V^{(m)},\alpha^m)$, the property $\widetilde{\xi}|_{\cup_{j=1}^m \alpha^{-j}(Y)}=0$ implies that $\eta=\psi^{-1}(\widetilde \xi)\in \E_Y^{\otimes m}\cong E_m\subset \mathcal O(\E_Y)$.  If $m=0$, then  $\cup_{j=1}^m \alpha^{-j}(Y):=\emptyset$,  $\E^{\otimes 0}= \Gamma(\mathscr V^{(0)},\alpha^0)=C(X)$ and $\psi$ is the identity $^*$-homomorphism of $C(X)$. 

Finally, for  $x \in \overline{Y_i}$, $1\leq i\leq k-1$,  and a vector $(a_0, a_1, \ldots, a_{r_i-1})\in \mathscr D^{(r_i)}_x=\mathscr V^{(0)}_x\oplus\cdots\oplus \mathscr V^{(r_i-1)}_x$,  we have 
\begin{align*}
&\  \pi_i(\psi^{-1}(\widetilde \xi))(x)(a_0, a_1, \ldots, a_{r_i-1}) \\
=&\ ( 0,\ldots,0, \psi(\psi^{-1}(\widetilde \xi))(x)\otimes a_0, \psi(\psi^{-1}(\widetilde \xi))(\alpha(x))\otimes a_1,\\
&\qquad\qquad\qquad\qquad  \ldots, \psi(\psi^{-1}(\widetilde \xi))(\alpha^{r_i-1-m}(x)) \otimes a_{r_i-1-m})\\
=&\ ( 0,\ldots,0, \widetilde \xi(x)\otimes a_0,  \widetilde \xi(\alpha(x))\otimes a_1, \ldots,  \widetilde \xi(\alpha^{r_i-1-m}(x)) \otimes a_{r_i-1-m})\\
=&\ ( 0,\ldots,0, 0,  \ldots,  0) =0,
\end{align*}
and  for $x \in \overline{Y_k}$, $\pi_k(\psi^{-1}(\widetilde \xi))= \varsigma$ (see Remark~\ref{remark:identification}). 
Hence $\pi_i(\eta)=0$ for $1\leq i\leq ,k-1$ and $\pi_k(\eta)=\varsigma$ as desired. 
\end{proof}

Observe that the structure of the matrix bundles $\mathscr{M}_k$ in the iterated pullback 
\begin{equation}
    \Gamma(\mathscr{M}_1) \oplus_{\Gamma(\mathscr{M}_2|_{\overline{Y_2}\setminus Y_2})} \oplus \dots \oplus_{\Gamma(\mathscr{M}_K|_{\overline{Y_K}\setminus Y_K})} \Gamma(\mathscr{M}_K),
\end{equation} 
is dictated by the line bundle $\mathscr{V}$. In particular, if $\mathscr{V}$ is trivial, then so too are the $\mathscr{M}_k$. In that case $\Gamma(\mathscr{M}_k) \cong C(\overline{Y_k}, M_{r_k})$ and we obtain the same decomposition as the one given in \eqref{rsh-trivial}. 

In the case of a non-trivial line bundle, we arrive at the following.

\begin{theorem}\label{maintheorem}
 Let $X$ be an infinite compact metric space, $\alpha : X \to X$ a minimal homeomorphism, $\mathscr{V}$ a line bundle over $X$ and $Y \subset X$ a closed subset with non-empty interior. Let $r_1 < \dots < r_K$ denote the distinct first return times to $Y$. The orbit breaking algebra $\mathcal O (C_0(X \setminus Y)\Gamma(\mathscr{V}, \alpha))$ has a recursive subhomogeneous decomposition $B$ such that 
 \begin{enumerate}
     \item the length $l$ of $B$ is at least $K$,
     \item the base spaces $Z_1, \dots, Z_l$ have dimension bounded by $X$, and there is $l_0= 0 < l_1 < l_2 < \dotsm < l_K = l$ such that 
     \[
    \bigcup_{j=l_{k-1}+1}^{l_k} Z_j = \overline{\{ y \in Y \mid r(y) = r_k\}} 
     \]
     for every $1 \leq k \leq K$,
     \item the matrix sizes are exactly $r_1, \dots, r_K$.
 \end{enumerate}
\end{theorem}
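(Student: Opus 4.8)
The plan is to read the decomposition off directly from Theorem~\ref{thm:rsh}, after unfolding the matrix–bundle section algebras that appear there into honest recursive subhomogeneous towers. By Theorem~\ref{thm:rsh} with $k=K$ (together with Proposition~\ref{prop:gauge action}), $\mathcal O(\E_Y)$ is $^*$-isomorphic to the iterated pullback
\[
B_K=\bigl[\cdots\bigl[\Gamma(\mathscr M_1)\oplus_{\Gamma(\mathscr M_2|_{\overline{Y_2}\setminus Y_2})}\Gamma(\mathscr M_2)\bigr]\oplus\cdots\bigr]\oplus_{\Gamma(\mathscr M_K|_{\overline{Y_K}\setminus Y_K})}\Gamma(\mathscr M_K),
\]
where, for each $k$, the bundle $\mathscr M_k=End(\mathscr D^{(r_k)})|_{\overline{Y_k}}$ is locally trivial over the nonempty compact metric space $\overline{Y_k}$ with fibre $M_{r_k}(\mathbb C)$, the map $\rho_{k-1}$ is restriction to the closed set $\overline{Y_k}\setminus Y_k$, and $B_1=\Gamma(\mathscr M_1)$. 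Everything that remains is organisational.

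First I would fix, for each $k$, a finite closed cover $F^k_1,\dots,F^k_{p_k}$ of $\overline{Y_k}$ over which $\mathscr M_k$ is trivial; such a cover exists by compactness of $\overline{Y_k}$, and $p_k\ge1$. The construction behind \cite[Proposition~1.7]{Phillips:recsub} then furnishes a recursive subhomogeneous decomposition of $\Gamma(\mathscr M_k)$ whose base spaces, in order, are $F^k_1,\dots,F^k_{p_k}$, whose matrix sizes are all equal to $r_k$, and whose gluing maps are restrictions to closed subsets; note $\bigcup_{j=1}^{p_k}F^k_j=\overline{Y_k}$. Next I would establish the routine \emph{flattening lemma}: given a recursive subhomogeneous algebra $A$, a matrix bundle $\mathscr W$ of rank $n$ over a compact metric space $Z$ with finite trivialising closed cover $F_1,\dots,F_p$, a closed subset $W\subset Z$, a unital homomorphism $\varphi\colon A\to\Gamma(\mathscr W|_W)$, and the restriction $\rho\colon\Gamma(\mathscr W)\to\Gamma(\mathscr W|_W)$, the pullback $A\oplus_{\Gamma(\mathscr W|_W)}\Gamma(\mathscr W)$ is again recursive subhomogeneous, with a decomposition obtained from that of $A$ by successively adjoining, for $j=1,\dots,p$, a stage of base space $F_j$, matrix size $n$, and gluing map the restriction to the closed set $F_j\cap(W\cup F_1\cup\dots\cup F_{j-1})$. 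This is proved by the obvious chart-by-chart induction: writing sections of $\mathscr W$ as clutched tuples over $F_1,\dots,F_p$, one checks at stage $j$ that partial data extend to $F_j$ precisely when they agree on $F_j\cap(W\cup F_1\cup\dots\cup F_{j-1})$, exactly as in the trivial-bundle analysis of \cite[Theorem~11.3.19]{GioKerPhi:CRM}. Applying the lemma with $A=B_{k-1}$, $\mathscr W=\mathscr M_k$, $W=\overline{Y_k}\setminus Y_k$, $\varphi=\varphi_{k-1}$ for $k=2,\dots,K$, starting from the decomposition of $B_1=\Gamma(\mathscr M_1)$, flattens $B_K$ into a single recursive subhomogeneous decomposition $B$ of $\mathcal O(\E_Y)$.

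Finally I would read off the data. The base spaces of $B$, in order, are $F^1_1,\dots,F^1_{p_1},F^2_1,\dots,F^2_{p_2},\dots,F^K_1,\dots,F^K_{p_K}$; write $Z_1,\dots,Z_l$ for this list, set $l_0=0$ and $l_k=p_1+\dots+p_k$, so $l=l_K$. Since every $p_k\ge1$ we get $l_0=0<l_1<\dots<l_K=l$ and $l\ge K$, which is~(1). For each $k$,
\[
\bigcup_{j=l_{k-1}+1}^{l_k}Z_j=\bigcup_{j=1}^{p_k}F^k_j=\overline{Y_k}=\overline{\{y\in Y\mid r(y)=r_k\}},
\]
and each $F^k_j$, being a closed subspace of $\overline{Y_k}\subset X$, has covering dimension at most $\dim X$ by monotonicity of covering dimension for subspaces of a compact metric space; this is~(2). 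By construction the matrix sizes of $B$ are $r_k$ with multiplicity $p_k$ for $k=1,\dots,K$, so the matrix sizes occurring are exactly $r_1,\dots,r_K$, which is~(3).

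The only point that takes any care is the flattening lemma, that is, checking that the pullback $B_{k-1}\oplus_{\Gamma(\mathscr M_k|_{\overline{Y_k}\setminus Y_k})}\Gamma(\mathscr M_k)$ composed with the internal pullback structure of $\Gamma(\mathscr M_k)$ is again a tower in the precise sense of the definition of recursive subhomogeneous algebras, with all gluing maps being restrictions to closed subsets. This is pure bookkeeping once sections of $\mathscr M_k$ are expressed in a trivialising cover; the genuinely delicate issue of the whole construction — tracking boundary points through the Rokhlin towers in the presence of the twist — has already been resolved in Theorems~\ref{thm:bdp} and~\ref{thm:rsh}, so nothing new is needed here.
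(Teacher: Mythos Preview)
Your proposal is correct and follows essentially the same approach as the paper: invoke Theorem~\ref{thm:rsh} to obtain the iterated pullback of the $\Gamma(\mathscr M_k)$, use \cite[Proposition~1.7]{Phillips:recsub} to write each $\Gamma(\mathscr M_k)$ as an RSH algebra over a closed cover of $\overline{Y_k}$, and then conclude that the whole tower is RSH with the stated base spaces and matrix sizes. The only difference is expository: where the paper simply cites \cite[Proposition~3.2]{Phillips:recsub} to pass from the iterated bundle pullback to a genuine RSH decomposition, you instead sketch this step by hand as your ``flattening lemma'' (trivialise $\mathscr M_k$ over the $F^k_j$ and adjoin one chart at a time, absorbing the transition functions into the unital map $\varphi$). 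That is exactly the content of Phillips's permanence result specialised to this situation, so nothing is gained or lost---your version is just more self-contained.
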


\begin{proof} Let $\E_Y := C_0(X\setminus Y)\Gamma(\mathscr{V}, \alpha)$. By the previous theorem, we have that  
\begin{equation*}
 \mathcal{O}(\E_Y) \cong [\cdots [\Gamma(\mathscr{M}_1)\oplus_{\Gamma(\mathscr{M}_2|_{\overline{Y}_2 \setminus Y_2})} \Gamma(\mathscr{M}_2)]\oplus \cdots ]\oplus_{\Gamma(\mathscr{M}_K|_{\overline{Y}_K \setminus Y_K})} \Gamma(\mathscr{M}_K).
 \end{equation*}
 By \cite[Proposition 1.7]{Phillips:recsub}, $ \Gamma(\mathscr M_k)$, for every $1 \leq k \leq K$, is an RSH algebra with matrix sizes $r_k$ with base spaces $Z_{l_{k-1} + 1}, \dots, Z_{l_k}$ for some $l_k \geq l_{k-1} + 1$ whose union is the base space of the bundle, that is 
 \[ 
    \bigcup_{j=l_{k-1}+1}^{l_k} Z_j = \overline{\{ y \in Y \mid r(y) = r_k\}},
     \] 
It follows from \cite[Proposition 3.2]{Phillips:recsub} that $\mathcal{O}(\E_Y)$ itself has an RSH decomposition with length at least $K$, whose bases spaces are bounded in dimension by the dimension of $X$ and satisfy (2), and whose matrix sizes are $r_1, \dots, r_K$.
\end{proof}

A main motivation for the RSH construction is to provide classification results for the $\mathrm C^*$-algebras associated to minimal homeomorphisms twisted by line bundles and their simple orbit-breaking subalgebras.  In~\cite{FJS}, the authors will show that classification is possible under the assumption that the homeomorphism has mean dimension zero. However, Theorem~\ref{maintheorem} is already enough to give an alternative proof of the classification results from \cite{AAFGJSV2024}, which is to say, the case where $\dim X< \infty$.

Let $A$ be a simple, separable, nuclear, unital $\mathrm C^*$-algebra. The \emph{Elliott invariant} of $A$, denoted $\Ell(A)$ is given by
 \[ \Ell(A) = (K_0(A), K_0(A)_+, [1_A], K_1(A), T(A), \rho),\]
 where $(K_0(A), K_0(A)_+, [1_A], K_1(A))$ is the (pointed, ordered) $K$-theory of $A$, $T(A)$ the tracial state simplex, and $\rho : K_0(A) \times T(A) \to \mathbb{R}$ the pairing map defined by $\rho([p]-[q], \tau) = \tau(p) - \tau(q)$, for $\tau$ the (non-normalized) inflation of a tracial state to a suitable matrix algebra over $A$. 

\begin{corollary}[cf. {\cite[Theorem 6.18]{AAFGJSV2024}}] \label{cor:mainthmCor} Let $X$ and $Y$ be finite-dimensional infinite compact metric spaces. Let $\E = \Gamma(\mathscr{V}, \alpha)$ and $\F = \Gamma(\mathscr{W}, \beta)$ be a Hilbert $C(X)$-module and $C(Y)$-module, respectively, with respect to the line bundles $\mathscr{V}$ over $X$, $\mathscr{W}$ over $Y$, and minimal homeomorphisms $\alpha : X \to X$, $\beta: Y \to Y$. Then 
\[ \mathcal{O}_{C(X)}(\E) \cong \mathcal{O}_{C(Y)}(\F) \]
    if and only if 
    \[ \Ell(\mathcal{O}_{C(X)}(\E)) \cong \Ell(\mathcal{O}_{C(Y)}(\F)).\]
\end{corollary}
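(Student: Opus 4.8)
The plan is to recover the corollary from the Elliott classification theorem, once we have checked that both $\mathcal{O}_{C(X)}(\E)$ and $\mathcal{O}_{C(Y)}(\F)$ are \emph{classifiable} in the sense used above, i.e.\ unital, simple, separable, nuclear, $\mathcal{Z}$-stable and UCT. Given this, the implication ``$\cong\Rightarrow\Ell\cong$'' is just functoriality of $\Ell$, and the converse is exactly the content of the classification theorem. Unitality and separability are obvious, simplicity is immediate from minimality of $\alpha$ (resp.\ $\beta$) by the theorem in Section~\ref{sec:OBA}, and that same theorem supplies nuclearity and the UCT. So the only substantive point — and the place where Theorem~\ref{maintheorem} is used — is $\mathcal{Z}$-stability.

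To prove $\mathcal{Z}$-stability of $\mathcal{O}_{C(X)}(\E)$, fix $y_0\in X$ and a decreasing sequence $Y_1\supset Y_2\supset\cdots$ of closed subsets of $X$, each with non-empty interior (e.g.\ closed balls of radius $1/n$ about $y_0$), with $\bigcap_n Y_n=\{y_0\}$. Since $C_0(X\setminus Y_n)\subseteq C_0(X\setminus Y_{n+1})$, the orbit-breaking algebras $\mathcal{O}(\E_{Y_n})$ form a unital increasing sequence of $\mathrm C^*$-subalgebras of $\mathcal{O}(\E)$, and from $\E_{\{y_0\}}=\overline{\bigcup_n\E_{Y_n}}$ one gets $\mathcal{O}(\E_{\{y_0\}})=\overline{\bigcup_n\mathcal{O}(\E_{Y_n})}$, i.e.\ $\mathcal{O}(\E_{\{y_0\}})$ is an inductive limit of the $\mathcal{O}(\E_{Y_n})$. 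Each $Y_n$ is closed with non-empty interior, so Theorem~\ref{maintheorem} applies and realises $\mathcal{O}(\E_{Y_n})$ as a recursive subhomogeneous algebra whose base spaces all have dimension at most $\dim X<\infty$. By Winter's bound on the decomposition rank of subhomogeneous algebras \cite{Winter:subhomdr} there is $d<\infty$, depending only on $\dim X$, with $\mathrm{dr}(\mathcal{O}(\E_{Y_n}))\le d$ for all $n$; since decomposition rank does not increase under inductive limits, $\mathrm{dr}(\mathcal{O}(\E_{\{y_0\}}))\le d$. As $X$ is infinite and $\alpha$ is minimal, $\alpha$ has no periodic points, so $\{y_0\}\cap\alpha^m(\{y_0\})=\emptyset$ for $m\neq 0$; hence the theorem in Section~\ref{sec:OBA} gives that $\mathcal{O}(\E_{\{y_0\}})$ is simple, and it is also separable, unital, nuclear, infinite-dimensional and satisfies the UCT. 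Being simple, separable, unital, nuclear and of finite decomposition rank, $\mathcal{O}(\E_{\{y_0\}})$ is $\mathcal{Z}$-stable (and indeed classifiable).

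It remains to transfer $\mathcal{Z}$-stability from $\mathcal{O}(\E_{\{y_0\}})$ up to $\mathcal{O}(\E)$. By \cite{AAFGJSV2024}, $\mathcal{O}(\E_{\{y_0\}})$ is a centrally large subalgebra of $\mathcal{O}(\E)$ (here we use that $\mathscr{V}$ is a line bundle and that a single point is ``sufficiently small''). Since $\mathcal{O}(\E)$ is simple, separable, unital and nuclear, one then concludes that $\mathcal{O}(\E)$ is $\mathcal{Z}$-stable: this can be run through the radius of comparison, which by \cite{ArchPhil:SR1} and the subsequent theory of centrally large subalgebras agrees for $\mathcal{O}(\E_{\{y_0\}})$ and $\mathcal{O}(\E)$, so strict comparison of $\mathcal{O}(\E_{\{y_0\}})$ forces strict comparison of $\mathcal{O}(\E)$, which for a simple separable unital nuclear non-elementary algebra is equivalent to $\mathcal{Z}$-stability by the Toms--Winter theorem. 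The same argument applied over $Y$ with $\beta$ and $\mathscr{W}$ gives $\mathcal{Z}$-stability of $\mathcal{O}_{C(Y)}(\F)$. Hence both algebras are classifiable, and the corollary follows from the classification theorem.

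The main obstacle is this last step: arranging for $\mathcal{Z}$-stability (equivalently, strict comparison) to pass \emph{from} the centrally large subalgebra \emph{to} the ambient Cuntz--Pimsner algebra — the opposite of the direction used in \cite{AAFGJSV2024} — and doing so in a way that remains valid even when $\alpha$ admits infinitely many invariant probability measures, so that one cannot fall back on results restricted to finitely many extremal traces. Everything else — choosing the $Y_n$, identifying the inductive limit, and tracking simplicity, nuclearity, the UCT and the decomposition rank through the construction — is routine.
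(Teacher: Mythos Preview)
Your argument tracks the paper's proof almost verbatim through the construction of the decreasing sequence $Y_n\searrow\{y_0\}$, the inductive limit $\mathcal{O}(\E_{\{y_0\}})=\varinjlim\mathcal{O}(\E_{Y_n})$, the application of Theorem~\ref{maintheorem} to bound the base-space dimensions, the passage to finite decomposition rank via \cite{Winter:subhomdr} and \cite{KirWinter:dr}, and the conclusion that $\mathcal{O}(\E_{\{y_0\}})$ is $\mathcal{Z}$-stable via \cite{Winter:dr-Z-stable}. The only divergence is in the final transfer of $\mathcal{Z}$-stability up to $\mathcal{O}(\E)$.

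Here the paper does \emph{not} go through radius of comparison and the Toms--Winter equivalence. It simply invokes the fact that $\mathcal{O}(\E_{\{y_0\}})$ is a centrally large subalgebra of $\mathcal{O}(\E)$ \cite[Theorem 6.16]{AAFGJSV2024} and appeals directly to the permanence theorem for $\mathcal{Z}$-stability under centrally large subalgebras (Archey--Buck--Phillips: if $B\subset A$ is centrally large and $B$ is tracially $\mathcal{Z}$-absorbing, then so is $A$; combined with Hirshberg--Orovitz this gives $\mathcal{Z}$-stability for nuclear $A$). This result holds with no hypothesis on the trace simplex, so the obstacle you flag --- the implication ``strict comparison $\Rightarrow$ $\mathcal{Z}$-stable'' of Toms--Winter not being available beyond restricted trace-space geometry --- simply does not arise. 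Your route through strict comparison is not wrong in spirit, but as you yourself note it leaves a genuine gap in the general case; the paper's direct appeal to the centrally-large permanence result closes it. Replace your final paragraph with that citation and the proof is complete and identical to the paper's.
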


\begin{proof}
    Let $y \in X$ be a single point, and let $Y_1 \supset Y_2 \supset Y_3 \supset \cdots$ be a sequence of closed subsets of $X$, each with non-empty interior, satisfying 
    \[ \cap_{n \in \mathbb{Z}_{>0}} Y_n = \{ y\}.\]
    Then
    
    \[ \mathcal{O}_{C(X)}(\E_{\{y\}}) \cong \dlim \, \mathcal{O}_{C(X)}(\E_{Y_n}).\]
    By Theorem~\ref{maintheorem}, the orbit-breaking algebras $\mathcal{O}(\E_{Y_n})$ are recursive subhomogeneous $\mathrm C^*$-algebras whose base spaces are bounded in dimension by the dimension of $X$. By \cite[Theorem 6.1]{Winter:subhomdr}, it follows that $\mathcal{O}_{C(X)}(\E_{Y_n})$ has decomposition rank bounded by $\dim X$. Since $\mathcal{O}_{C(X)}(\E_{\{y\}})$ is the inductive limit of the $\mathcal{O}_{C(X)}(\E_{Y_n})$, the decomposition rank $\mathcal{O}_{C(X)}(\E_{\{y\}})$ is also bounded by $\dim X$ \cite[3.3 (ii)]{KirWinter:dr}. It follows that $\mathcal{O}_{C(X)}(\E_{\{y\}})$ is $\mathcal{Z}$-stable \cite[Theorem 5.1]{Winter:dr-Z-stable}. Since $\mathcal{O}(\E_{\{y\}})$ is a centrally large subalgebra of $\mathcal{O}_{C(X)}(\E)$ \cite[Theorem 6.16]{AAFGJSV2024}, it too is $\mathcal{Z}$-stable. Similarly, $\mathcal O_{C(Y)}(\F)$ is $\mathcal Z$-stable. Thus $\mathcal O_{C(X)}(\E)$ and $\mathcal O_{C(Y)}(\F)$ are simple, separable, unital, nuclear, $\mathcal Z$-stable $\mathrm{C}^*$-algebras and the result follows from \cite{ElliottGong2025}.
\end{proof}

\begin{remark}
  \begin{enumerate}[left=0pt]
  \item If the bundle $\mathscr{V}$ is trivial, then the RSH decomposition of Theorem~\ref{maintheorem} is the same as the RSH decomposition for $\mathrm C^*(X, C_0(X \setminus Y) \subset C(X) \rtimes_\alpha \mathbb{Z}$ given in \cite{QLin:Ay}.
\item If $(X, \alpha)$ is only assumed to have mean dimension zero (which means $\dim X$ may be infinite), we can't deduce finite decomposition rank of $\mathcal{O}(\E_{\{y\}})$, which was used in the proof of Corollary~\ref{cor:mainthmCor} to show classification. This is because the base spaces of the RSH decomposition for each $\mathcal{O}(\E_{Y_n})$ can be infinite dimensional.  However, using the RSH decomposition of $C^*(C(X), C_0(X \setminus Y)u) \subset C(X)\rtimes_\alpha \mathbb{Z}$ for $Y \subset X$
a closed subset with non-empty interior, Elliott and Niu show that for any point $y \in X$, the orbit-breaking algebra  $C^*(C(X), C_0(X \setminus \{y\})u) \subset C(X)\rtimes_\alpha \mathbb{Z}$ can be locally approximated by subhomogeneous $\mathrm{C}^*$-algebras with arbitrarily small dimension ratio \cite{EllNiu:MeanDimZero}, which is enough to imply $\mathcal{Z}$-stability (and hence classifiability) of both the orbit-breaking algebra $C^*(C(X), C_0(X \setminus \{y\})u)$ and its containing crossed product $C(X)\rtimes_\alpha \mathbb{Z}$. In \cite{FJS}, we will use the RSH construction of Theorem~\ref{maintheorem} to generalize this to the Cuntz--Pimsner algebras $\mathcal O(\Gamma(\mathscr V, \alpha))$ and $\mathcal O(\Gamma(\mathscr V, \alpha)_{\{ y\}})$.

\item Let $(X, \alpha)$ be a minimal system. In \cite{alboiu-lutley}, it is shown that crossed product $C(X)\rtimes_\alpha \mathbb{Z}$ has stable rank one, with no restrictions an on $(X, \alpha)$. In particular, $C(X)\rtimes_\alpha \mathbb{Z}$ may have stable rank one even if it is not $\mathcal{Z}$-stable. This is shown by exhibiting a so-called \emph{diagonal subhomogeneous} (DSH) algebra decomposition for orbit-breaking subalgebras when $Y \subset X$ is closed with non-empty interior. A DSH algebra is a special type of RSH algebra where the gluing maps between stages of the RSH decomposition are assumed to be \emph{diagonal} (see \cite[Definition 2.3]{alboiu-lutley} for a precise definition). As above, if $y$ is a single point, then this implies $\mathrm{C}^*(X, C_0(X \setminus \{y\})u) \subset C(X) \rtimes_\alpha \mathbb{Z}$ is approximately DSH. They show that a simple inductive limit of DSH algebras with diagonal connecting maps has stable rank one \cite[Theorem 3.30]{alboiu-lutley}. Thus $\mathrm{C}^*(X, C_0(X \setminus \{y\})u)$ has stable rank one, and as it is a centrally large subalgebra, this passes to $C(X)\rtimes_\alpha \mathbb{Z}$. The RSH constructions in this paper will \emph{not} be DSH algebra in general. This follows from \cite[Corollary 2.22]{alboiu-lutley}. Nevertheless, our RSH constructions should be amenable to similar techniques. The question of stable rank of $\mathrm{C}^*$-algebras associated to homeomorphisms twisted by (non-trivial) line bundles is the subject of future investigations. 
  \end{enumerate} 
\end{remark}



\end{document}